\documentclass[12pt,a4paper]{amsart}
\usepackage{amscd}
\usepackage[latin2]{inputenc}
\usepackage{t1enc}
\usepackage[mathscr]{eucal}
\usepackage{indentfirst}
\usepackage{graphics}
\usepackage{pict2e}
\usepackage{epic}
\usepackage{epstopdf} 
\numberwithin{equation}{section}

\usepackage[margin=1in]{geometry}   
\usepackage{graphicx}               
\usepackage{enumerate}          	
\usepackage{amsmath}            	
\usepackage{amssymb}				
\usepackage{bm}						
\usepackage{mathtools}              
\usepackage{amsfonts}               
\usepackage{amsthm}                 
\usepackage{pgfplots}           	
\usepackage{hyperref}	   			
\usepackage[titletoc]{appendix} 	
\usepackage{tikz}
\usepackage{tikz-cd}
\hypersetup{ 
	urlbordercolor=red,
	linkbordercolor=red,
	citebordercolor=red,
	runbordercolor=red,
	citebordercolor=red,
	filebordercolor=red
}


\theoremstyle{plain}
\newtheorem{thm}{Theorem}[section]
\newtheorem{lem}[thm]{Lemma}
\newtheorem{prop}[thm]{Proposition}
\newtheorem{cor}[thm]{Corollary}
\newtheorem{theorem}{Theorem}

\theoremstyle{definition}

\newtheorem{defn}[thm]{Definition}

\theoremstyle{remark}



\newcommand{\ZZ}{\mathbb{Z}}      
\newcommand{\NN}{\mathbb{N}}    
\newcommand{\QQ}{\mathbb{Q}} 
\newcommand{\Fields}{\mathbf{Fields}}      
\newcommand{\Set}{\mathbf{Sets}}      
\newcommand{\Rings}{\mathbf{Comm Rings}}      
\newcommand{\AbGrps}{\mathbf{Ab Grps}}      
\newcommand{\bmu}{\bm{\mu}}     
\DeclareMathOperator{\thh}{th}

\DeclareMathOperator{\Id}{Id}

\newcommand{\mfp}{\mathfrak{p}} 

\DeclareMathOperator{\Tw}{Tw}

\DeclareMathOperator{\Gal}{Gal}
\DeclareMathOperator{\Aut}{Aut}

\DeclareMathOperator{\Inv}{Inv}
\DeclareMathOperator{\Hom}{Hom}

\DeclareMathOperator{\Spec}{Spec}
\DeclareMathOperator{\Pic}{Pic}

\DeclareMathOperator{\sep}{sep}

\DeclareMathOperator{\tors}{tors}
\DeclareMathOperator{\Tors}{Tors}

\DeclareMathOperator{\alg}{alg}

\DeclareMathOperator{\Imm}{Im}
\DeclareMathOperator{\Cl}{Cl}
\DeclareMathOperator{\Div}{Div}
\DeclareMathOperator{\dv}{div}
\DeclareMathOperator{\Cart}{Cart}
\DeclareMathOperator{\ev}{ev}
\DeclareMathOperator{\nm}{nm}

\DeclareMathOperator{\Frac}{Frac}

\DeclareMathOperator{\chara}{char}

\begin{document}

\title{Degree One Milnor $K$-invariants of Groups of Multiplicative Type}

\author[A. Wertheim]{Alexander Wertheim}


\email{awertheim@math.ucla.edu}



\begin{abstract} Let $G$ be a commutative affine algebraic group over a field $F$, and let $H \colon \Fields_{F} \to \AbGrps$ be a functor. A (homomorphic) $H$-invariant of $G$ is a natural transformation $\Tors(-, G) \to H$, where $\Tors(-, G)$ is the functor $\Fields_{F} \to \AbGrps$ taking a field extension $L/F$ to the group of isomorphism classes of $G_{L}$-torsors over $\Spec(L)$. The goal of this paper is to compute the group $\Inv_{\hom}^{1}(G, H)$ of $H$-invariants of $G$ when $G$ is a group of multiplicative type, and $H$ is the functor taking a field extension $L/F$ to $L^{\times} \otimes_{\ZZ} \QQ/\ZZ$.    
\end{abstract}

\maketitle

\section{Introduction} 
Let $G$ be an affine algebraic group over a field $F$ (of arbitrary characteristic), and let $\Fields_{F}$ denote the category of field extensions of $F$. Let 
\[H \colon \Fields_{F} \longrightarrow \AbGrps\]
be a functor. In \cite{GMS}, an {\bf $H$-invariant of $G$} is defined to be a natural transformation of set-valued functors   
\[I \colon \Tors(-, G) \longrightarrow H\]
where $\Tors(-, G)$ is the functor from $\Fields_{F}$ to $\Set$ taking a field extension $L/F$ to $\Tors(L, G_{L})$, the set of isomorphism classes of $G_{L}$-torsors over $\Spec(L)$. Invariants were first introduced by Serre in \cite[Section 6]{SerreInvariants}, where he defined invariants in the case when $H$ is a (Galois) cohomological functor. 

There is another type of invariant that one may consider, however. Namely, since any affine group scheme over $F$ may be viewed as a functor from $F$-algebras to groups, we define a {\bf type-zero} $H$-invariant of $G$ to be a natural transformation of set-valued functors $G \to H$, where by $G$ we mean the restriction of $G$ to $\Fields_{F}$. We denote the group of type-zero $H$-invariants of $G$ by $\Inv^{0}(G, H)$. To distinguish the invariants introduced in the previous paragraph from type-zero invariants, we will call them {\bf type-one} invariants, and we denote the group of type-one $H$-invariants of $G$ by $\Inv^{1}(G, H)$.

In this paper, we study type-one invariants when $G$ is an algebraic group of multiplicative type, i.e. when $G$ is a twisted form of a diagonalizable group. We note that every torus is a group of multiplicative type; in general, groups of multiplicative type need not be smooth or connected. We consider a slightly more restrictive class of invariants than those introduced above, however. If $G$ is \emph{commutative}, then for any affine $F$-scheme $X$, the pointed set $\Tors(X, G)$ can be given the structure of an abelian group. Since groups of multiplicative type are commutative, we may view the functor $\Tors(-, G)$ as a functor from $\Fields_{F}$ to $\AbGrps$. Accordingly, we will focus our attention on invariants which are morphisms of \emph{group-valued} functors. We will call such invariants {\bf homomorphic}, and we denote the subgroup of homomorphic type-one $H$-invariants of $G$ by $\Inv^{1}_{\hom}(G, H)$. Likewise, one may consider homomorphic type-zero invariants, which we will similarly denote $\Inv^{0}_{\hom}(G, H)$.

The goal of this paper is to determine $\Inv^{1}_{\hom}(G, K^{M}_{1} \otimes_{\ZZ} \QQ/\ZZ)$, the group of (type-one) {\bf degree one Milnor $K$-invariants of $G$}, where $K^{M}_{i}$ denotes the functor sending a field extension $L/F$ to the $i^{\thh}$ Milnor $K$-group of $L$ (see \cite{Milnor}); we recall that $K^{M}_{0}(L) = \ZZ, K^{M}_{1}(L) = L^{\times}$. For any $n \in \NN$, let $K^{M}_{1}/n$ denote the functor $K^{M}_{1} \otimes_{\ZZ} \ZZ/n\ZZ$. The embedding $\ZZ/n\ZZ \hookrightarrow \QQ/\ZZ$ induces a morphism of functors $\iota_{n} \colon K^{M}_{1}/n \to K^{M}_{1} \otimes_{\ZZ} \QQ/\ZZ$; likewise, if $n$ and $m$ are positive integers such that $n$ divides $m$, then the embedding $\ZZ/n\ZZ \to \ZZ/m\ZZ$ sending $[1]_{n}$ to $[m/n]_{m}$ induces a morphism of functors $\beta_{n, m} \colon K^{M}_{1}/n \to K^{M}_{1}/m$. One may check that the collection of functors $\{K^{M}_{n} \otimes_{\ZZ} \QQ/\ZZ\}_{n \in \NN}$ defines the data of a cycle module in the sense of Rost (see \cite{Rost}), as does $\{K^{M}_{n} \otimes_{\ZZ} \ZZ/m\ZZ\}_{n \in \NN}$ for any $m \in \NN$. The functors $K^{M}_{1} \otimes_{\ZZ} \QQ/\ZZ$ and $K^{M}_{1}/m$ respectively form the first graded components of these cycle modules. 

As we will explain in Section \ref{BundlesAreTorsors}, a classic Kummer theory argument shows that there is an isomorphism $\Sigma_{n} \colon K^{M}_{1}/n \to \Tors(-, \bmu_{n, F})$ of group valued functors. On the other hand, any $\chi \in \Hom(G, \bmu_{n, F}) = G^{\ast}[n]$ gives rise to a morphism of group-valued functors $\Tors_{\ast}(\chi) \colon \Tors(-, G) \to \Tors(-, \bmu_{n, F})$. Thus, we may associate to any element $\chi \in G^{\ast}[n]$ a homomorphic invariant $I_{\chi} \in \Inv^{1}_{\hom}(G, K^{M}_{1}/n)$ which is the composition of $\Tors_{\ast}(\chi)$ with $\Sigma_{n}^{-1}$. This leads us to our first main theorem.

\begin{theorem}[\ref{TypeOneGMTModN}] \label{BigA} 
	The map $\Phi(G, n) \colon G^{\ast}[n] \to \Inv^{1}_{\hom}(G, K^{M}_{1}/n)$ sending $\chi$ to $I_{\chi}$ is a group isomorphism. 
\end{theorem}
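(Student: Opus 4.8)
The plan is to show that $\Phi(G,n)$ is a well-defined group homomorphism which is both injective and surjective, by reducing everything to the case of diagonalizable groups via Galois descent and then exploiting the explicit Kummer-theoretic description of torsors under $\bmu_n$.

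First I would verify that $\Phi(G,n)$ is a group homomorphism: since $\Tors_\ast$ is a functor and $\chi \mapsto \Tors_\ast(\chi)$ is additive in $\chi$ (the group structure on $\Hom(G,\bmu_{n,F})$ being pointwise, and the group structure on $\Tors(-,\bmu_{n,F})$ coming from the commutative group structure on $\bmu_{n,F}$), composing with the fixed isomorphism $\Sigma_n^{-1}$ preserves additivity. So this is a routine check. For injectivity, suppose $I_\chi = 0$. The key idea is to evaluate the invariant on a sufficiently generic torsor that detects $\chi$. Over a field $L/F$ splitting $G$ (so that $G_L$ is diagonalizable), one has $G_L \cong \mathbf{D}(M)$ for a finitely generated abelian group $M = G^\ast$, and $\chi$ corresponds to an element of $M[n] = G^\ast[n]$. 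Using the isomorphism $\Tors(L, \mathbf{D}(M)_L) \cong \Hom(M, L^\times)$ (or its torsion analogue) and functoriality, evaluating $I_\chi$ on the universal torsor pulls $\chi$ back to the identity element of $L^\times \otimes \ZZ/n\ZZ$, forcing $\chi$ to vanish after passing to a large enough extension — but $G^\ast[n] \to (G^\ast[n])_L$ is injective, so $\chi = 0$. I would need to be careful to choose $L$ so that $L^\times/n$ is large enough to separate characters; taking $L = F(t_1, \dots, t_r)$ with enough independent transcendentals, or a field with a suitable valuation, does the job.

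The main work is surjectivity. Given an arbitrary $I \in \Inv^1_{\hom}(G, K^M_1/n)$, I want to produce $\chi \in G^\ast[n]$ with $I = I_\chi$. The strategy is: (1) first handle the split (diagonalizable) case $G = \mathbf{D}(M)$, where $\Tors(-,G) \cong \bigoplus \Tors(-,\bmu_{n_i})$ decomposing $M$ into cyclic pieces, and a homomorphic invariant into $K^M_1/n$ is determined by its restriction to each $\bmu_{n_i}$-factor; reduce further to $G = \bmu_m$, where one must compute $\Inv^1_{\hom}(\bmu_m, K^M_1/n) \cong \Hom(\ZZ/m, \ZZ/n) \cong \bmu_m^\ast[n]$ directly — this likely uses the identification of $\Tors(-,\bmu_m)$ with $K^M_1/m$ and a computation of $\Hom$ of cycle modules (or functors), and is essentially a statement that the only natural homomorphisms $L^\times/m \to L^\times/n$ are the "obvious" ones (powers), which one proves via specialization/generic-element arguments on $F(t)$. (2) Then descend: for general $G$ of multiplicative type split by a finite Galois extension $E/F$ with group $\Gamma$, an invariant $I$ of $G$ restricts to a $\Gamma$-equivariant invariant of $G_E \cong \mathbf{D}(M)$, hence (by the split case) to an element of $(G^\ast[n])^\Gamma = G^\ast[n]$; conversely one checks this $\Gamma$-invariant character gives back $I$. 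The descent step requires knowing that $\Inv^1_{\hom}(G,-) \to \Inv^1_{\hom}(G_E,-)^\Gamma$ is injective (an invariant is determined by its values after base change to $E$, which follows because every torsor over $L/F$ becomes a torsor over $LE$ and $H \to H(-\otimes_F E)$ is injective on the relevant groups) — and ideally an isomorphism onto the $\Gamma$-fixed part.

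The hard part will be step (1)'s core computation: proving that a homomorphic invariant $\Tors(-,\bmu_m) \to K^M_1/n$ of group-valued functors on $\Fields_F$ must be induced by an element of $\Hom(\bmu_m,\bmu_n) = \bmu_m^\ast[n]$. This is the place where genuine input is needed — one cannot just cite formal nonsense. I expect the argument to run: identify $\Tors(-,\bmu_m) \cong K^M_1/m$ via $\Sigma_m$, so one must show every natural homomorphism $\varphi_L \colon L^\times/m \to L^\times/n$ is multiplication by some fixed $a \in \ZZ/n\ZZ$ (with the compatibility $m \mid$ (order constraints)); evaluate $\varphi$ on the generic unit $t \in F(t)^\times$, write $\varphi_{F(t)}([t]) = [f(t)] \cdot [t]^a$ for some $a$ and some "constant" part, and use naturality under the $F$-automorphisms $t \mapsto ct$ and under specializations $t \mapsto$ elements of field extensions to force the constant part to vanish and $a$ to be well-defined modulo the gcd constraints; residue maps from the cycle module structure (the tame symbol) give a clean way to extract $a$. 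Once this key lemma is in hand, the decomposition into cyclic factors, the passage to general diagonalizable $G$, and the Galois descent are comparatively formal, modulo careful bookkeeping of the finitely generated abelian group $G^\ast$ and its $\Gamma$-action.
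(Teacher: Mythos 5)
Your overall strategy is genuinely different from the paper's. The paper fixes a resolution $1 \to G \to P \to T \to 1$ of $G$ by tori with $P$ quasisplit, which yields an exact sequence $1 \to \Inv^{1}_{\hom}(G, H) \to \Inv^{0}_{\hom}(T, H) \to \Inv^{0}_{\hom}(P, H)$, and then computes the type-zero invariants of the tori $T$ and $P$ via evaluation at the generic point, a divisor-class argument, and Galois descent of the groups $\Psi_{\nm}(F[T],n)$ (where the hypothesis that $T^{\ast}_{\sep}$ is torsion-free is used crucially, in Theorem \ref{GalFixedTorsors}). You instead want to descend invariants of $G$ directly from a splitting field $E$, reduce the split case to cyclic factors $\bmu_{m}$, and compute $\Nat(K^{M}_{1}/m, K^{M}_{1}/n)$ via generic-element and residue arguments; this last ingredient is in the same spirit as Proposition \ref{TypeZeroGenericSet} and Theorem \ref{TypeZeroInvariantsReductiveModN} of the paper, but applied only to $\GG_{m}$.

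The descent step is where your argument breaks. You justify injectivity of $\Inv^{1}_{\hom}(G, K^{M}_{1}/n) \to \Inv^{1}_{\hom}(G_{E}, K^{M}_{1}/n)$ by asserting that $H(L) \to H(L\otimes_{F} E)$ is injective, where $H = K^{M}_{1}/n$. This is false in general: for a field $L \supset F$ and $L' = LE$, the kernel of $L^{\times}/(L^{\times})^{n} \to (L')^{\times}/((L')^{\times})^{n}$ can be nontrivial. The corestriction map in the cycle module only shows that elements of this kernel are annihilated by $[L':L]$, which may share a common factor with $n$; concretely, if $\zeta_{p} \in L$ and $L' = L(c^{1/p})$ for some $c \in L^{\times}\setminus (L^{\times})^{p}$, then $[c]$ is a nontrivial element of the kernel for $n = p$. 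This is not an avoidable technicality: the groups $G$ for which $\Inv^{1}_{\hom}(G, K^{M}_{1}/n)$ is nonzero are precisely those with $n$-torsion in $G^{\ast}_{\sep}$, and for such $G$ the minimal splitting field $E$ can have degree divisible by factors of $n$ (e.g.\ $G$ dual to $(\ZZ/p\ZZ)^{2}$ with a unipotent $\Gamma$-action of order $p$). The paper's resolution-by-tori device is exactly what sidesteps this obstruction: descent is never attempted for $G$ itself, only for the tori $T$ and $P$, where $T^{\ast}_{\sep}[n] = 0$ and the long-exact-sequence argument of Theorem \ref{GalFixedTorsors} goes through. As a smaller point, your claimed identification $\Tors(L, \mathbf{D}(M)_{L}) \cong \Hom(M, L^{\times})$ is incorrect (for $M = \ZZ$ it would give $L^{\times}$, not $0$); the correct object is $H^{1}(L, \mathbf{D}(M))$, and for cyclic torsion $M$ this recovers the Kummer description, but the free part contributes nothing.
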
    

The composition of any such $I_{\chi}$ with $\iota_{n}$ produces an element of $\Inv^{1}_{\hom}(G, K^{M}_{1} \otimes_{\ZZ} \mathbb{Q}/\mathbb{Z})$, and so defines a group homomorphism $\tilde{\Phi}(G, n) \colon G^{\ast}[n] \to \Inv^{1}_{\hom}(G, K^{M}_{1} \otimes_{\ZZ} \mathbb{Q}/\mathbb{Z})$ for each $n \in \NN$. Passing to the colimit as $n$ varies, we obtain a universally induced group morphism $\Phi(G) \colon G^{\ast}_{\tors} \to \Inv^{1}_{\hom}(G, K^{M}_{1} \otimes_{\ZZ} \mathbb{Q}/\mathbb{Z})$.

\begin{theorem}[\ref{TypeOneGMTQZ}] \label{BigB} 
The map $\Phi(G) \colon G^{\ast}_{\tors} \to \Inv^{1}_{\hom}(G, K^{M}_{1} \otimes_{\ZZ} \mathbb{Q}/\mathbb{Z})$ is a group isomorphism. 
\end{theorem}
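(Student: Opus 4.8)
The plan is to deduce Theorem~\ref{BigB} from Theorem~\ref{BigA} by a colimit argument, after checking that the maps $\tilde{\Phi}(G,n)$ assemble compatibly. First I would record that $G^{\ast}_{\tors} = \varinjlim_{n} G^{\ast}[n]$, where the colimit is taken over the directed system in which, for $n \mid m$, the transition map $G^{\ast}[n] \hookrightarrow G^{\ast}[m]$ is the natural inclusion of torsion subgroups. The key compatibility to verify is that the square with vertical maps $\tilde{\Phi}(G,n)$ and $\tilde{\Phi}(G,m)$, top map this inclusion, and bottom map the identity on $\Inv^{1}_{\hom}(G, K^{M}_{1} \otimes_{\ZZ} \QQ/\ZZ)$, commutes. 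Unwinding definitions, this reduces to the identity $\iota_{m} \circ \beta_{n,m} = \iota_{n}$ of morphisms $K^{M}_{1}/n \to K^{M}_{1} \otimes_{\ZZ} \QQ/\ZZ$, together with the compatibility of the Kummer isomorphisms $\Sigma_{n}$ with the maps $\Tors_{\ast}(-)$ induced by the inclusions $\bmu_{n,F} \hookrightarrow \bmu_{m,F}$; both are straightforward diagram chases on the level of functors, the first being essentially the observation that $[1]_n \mapsto [m/n]_m \mapsto (m/n) \cdot \tfrac{1}{m} = \tfrac{1}{n}$ in $\QQ/\ZZ$. Granting this, $\Phi(G)$ is the map on colimits induced by the $\tilde{\Phi}(G,n)$.

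Next I would argue injectivity of $\Phi(G)$. Since $\QQ/\ZZ$ is the filtered colimit of the $\ZZ/n\ZZ$ and filtered colimits are exact, the natural map $\varinjlim_{n} \Inv^{1}_{\hom}(G, K^{M}_{1}/n) \to \Inv^{1}_{\hom}(G, K^{M}_{1} \otimes_{\ZZ} \QQ/\ZZ)$ need not itself be an isomorphism in general (invariants are defined pointwise over all field extensions, and colimits of invariant groups need not commute with the colimit defining $\QQ/\ZZ$), so I would instead argue directly. Given $\chi \in G^{\ast}_{\tors}$ with $\Phi(G)(\chi) = 0$, pick $n$ with $\chi \in G^{\ast}[n]$; then $\iota_{n} \circ I_{\chi} = 0$. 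Because $\iota_{n} \colon K^{M}_{1}/n \to K^{M}_{1} \otimes_{\ZZ} \QQ/\ZZ$ is a pointwise-injective morphism of functors (the inclusion $L^{\times}/n \hookrightarrow L^{\times} \otimes_{\ZZ} \QQ/\ZZ$ is injective since $L^{\times} \otimes_{\ZZ} \QQ/\ZZ = \varinjlim_{m} L^{\times}/m$ with injective transition maps), it follows that $I_{\chi} = 0$ in $\Inv^{1}_{\hom}(G, K^{M}_{1}/n)$, whence $\chi = 0$ by the injectivity half of Theorem~\ref{BigA}.

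For surjectivity, let $J \in \Inv^{1}_{\hom}(G, K^{M}_{1} \otimes_{\ZZ} \QQ/\ZZ)$ be arbitrary; I must produce $\chi \in G^{\ast}_{\tors}$ with $\Phi(G)(\chi) = J$. The crucial point is to show that $J$ factors through $\iota_{n} \colon K^{M}_{1}/n \to K^{M}_{1} \otimes_{\ZZ} \QQ/\ZZ$ for some $n$, i.e. that $J$ has "bounded denominators." I expect this to be the main obstacle. One natural route: because $G$ is of multiplicative type, $G^{\ast}$ is a finitely generated (discrete) $\Gal(F^{\sep}/F)$-module, so $G^{\ast}_{\tors}$ is finite, say of exponent $N$. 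I would use this to show that the functor $\Tors(-, G)$ is, in an appropriate sense, $N$-torsion away from a part already handled — more precisely, I would split off the toral and the multiplicative-torsion parts of $G$ and reduce to the case $G = \bmu_{n}$ twisted, where by Theorem~\ref{BigA} and Kummer theory every invariant visibly lands in the image of $\iota_{n}$; alternatively, evaluate $J$ on the generic torsor (over a suitable function field) to see that its value, and hence $J$ itself by naturality, is killed by $N$, so $J = \iota_{N} \circ J'$ for a unique $J' \in \Inv^{1}_{\hom}(G, K^{M}_{1}/N)$. Either way, once $J = \iota_{N} \circ J'$ is established, Theorem~\ref{BigA} gives $\chi \in G^{\ast}[N] \subseteq G^{\ast}_{\tors}$ with $J' = I_{\chi}$, and then $\Phi(G)(\chi) = \iota_{N} \circ I_{\chi} = J$. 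Combined with injectivity, this proves $\Phi(G)$ is an isomorphism. The delicate step throughout is the "bounded denominators" claim; I would be prepared to prove it by a careful analysis of torsors of groups of multiplicative type over the relevant generic points, using that $G^{\ast}_{\tors}$ is finite to control the torsion appearing in $\Tors(L, G_{L})$ uniformly in $L$.
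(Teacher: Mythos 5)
Your setup (the directed-system compatibility and the injectivity of $\Phi(G)$ via pointwise injectivity of $\iota_n$) is fine, but the surjectivity half has a genuine gap: you correctly identify "bounded denominators" as the crux, yet you never actually prove it. The two routes you sketch are not carried out, and it is not clear they would work as stated. In particular, the claim that evaluating $J$ on a generic torsor shows $J$ is killed by the exponent $N$ of $G^{\ast}_{\tors}$ is unjustified: the value $J(L)(V) \in L^{\times}\otimes_{\ZZ}\QQ/\ZZ$ of an a priori arbitrary invariant on an arbitrary torsor class has no visible reason to be $N$-torsion until after you already know the theorem. The "split off toral and torsion parts" idea is likewise not a proof.

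Ironically, the route you dismissed is the one the paper uses. The paper proves (Proposition~\ref{ColimTypeOne}) precisely that
\[\operatornamewithlimits{colim}_{n}\Inv^{1}_{\hom}(G, K^{M}_{1}/n)\longrightarrow \Inv^{1}_{\hom}(G, K^{M}_{1}\otimes_{\ZZ}\QQ/\ZZ)\]
is an isomorphism, and it does so not by a direct "bounded denominators" analysis for $G$, but by placing both sides in the exact sequences of the form~\eqref{ExactSeqInvariants}, for $K^{M}_{1}/n$ and $K^{M}_{1}\otimes\QQ/\ZZ$, and comparing. The outer terms are type-zero invariant groups of the tori $T$ and $P$; the corresponding colimit maps there are isomorphisms by Proposition~\ref{ColimTypeZero} (where the bounded-denominators issue \emph{is} tractable, because one can evaluate on the generic point of the torus and use the identification of invariants with $\ker(\partial_n(A))$), and the maps $\Inv(\rho,-)$ are injective. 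A short diagram chase then forces the left-hand colimit map to be an isomorphism, after which $\Phi(G)$ is the composite of $\operatornamewithlimits{colim}_n\Phi(G,n)$ (an isomorphism by Theorem~\ref{BigA}) with that map. So the missing step in your argument is exactly the content of Propositions~\ref{ColimTypeZero} and~\ref{ColimTypeOne}: transfer the bounded-denominators problem from $\Tors(-,G)$-invariants, where it is hard to attack directly, to type-zero invariants of tori, where the generic-point evaluation gives full control.
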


Our proofs of Theorems \ref{TypeOneGMTModN} and \ref{TypeOneGMTQZ} depend critically on the determination of homomorphic type-zero invariants for tori with values in $K^{M}_{1}/n$ for each $n \in \NN$. The following result was proven by Merkurjev (cf. \cite[Corollary 3.7]{MerkurjevIAG}) in the case when the characteristic of $F$ does not divide $n$; we give a proof in this paper which holds independent of the characteristic of $F$.

\begin{theorem}[\ref{TypeZeroInvariantsGaloisModN}] \label{BigC} 
	If $T$ is an algebraic torus, then $\Inv^{0}_{\hom}(T, K^{M}_{1}/n) \cong H^{0}(F, T_{\sep}^{\ast}/(T_{\sep}^{\ast})^{n})$.  
\end{theorem}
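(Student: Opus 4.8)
The plan is to identify a homomorphic type-zero invariant $T \to K^M_1/n$ with a natural transformation of functors on $\Fields_F$, and to leverage the fact that over a separable closure $\Fsep$ a torus becomes split, so that its group of characters $T^\ast_{\sep}$ carries an action of $\Gamma = \Gal(\Fsep/F)$. First I would reduce to the split case: over $\Fsep$ we have $T_{\Fsep} \cong \Gm^r$, and I would compute $\Inv^0_{\hom}(\Gm^r, K^M_1/n)$ directly. The key input here is the structure of homomorphic natural transformations $\Gm \to K^M_1/n$ as functors $\Fields \to \AbGrps$: for each field extension $L/F$ a group homomorphism $L^\times \to L^\times/(L^\times)^n$, natural in $L$. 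One expects the only such transformations to be the "power maps" $x \mapsto x^k \bmod (L^\times)^n$ for $k \in \ZZ/n\ZZ$ — i.e. $\Inv^0_{\hom}(\Gm, K^M_1/n) \cong \ZZ/n\ZZ$ — and hence $\Inv^0_{\hom}(\Gm^r, K^M_1/n) \cong (\ZZ/n\ZZ)^r \cong \Hom(\Gm^r{}^\ast, \ZZ/n\ZZ)^\vee$, matching $\Gm^r{}^\ast/n$. This should be provable by a specialization/generic-point argument: evaluate the invariant on the generic point of $\Gm$ (the function field $F(t)$, with the tautological point $t \in F(t)^\times$), obtaining a single element of $F(t)^\times/(F(t)^\times)^n$; naturality under the $F$-automorphisms and the multiplication map $\Gm \times \Gm \to \Gm$ forces this element to be of the form $t^k$ up to $n$-th powers, using that the invariant is a homomorphism on each $L^\times$.

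Next I would descend. A homomorphic type-zero invariant of $T$ over $F$ is the same as a $\Gamma$-equivariant homomorphic type-zero invariant of $T_{\Fsep}$, because torsors, the functor $K^M_1/n$, and naturality are all compatible with the Galois action; concretely, $\Inv^0_{\hom}(T, K^M_1/n) = \Inv^0_{\hom}(T_{\Fsep}, K^M_1/n)^\Gamma$. By the split computation, $\Inv^0_{\hom}(T_{\Fsep}, K^M_1/n) \cong \Hom_{\ZZ}(T^\ast_{\sep}, \ZZ/n\ZZ)$ — but here I must be careful about the direction of the action and about the distinction between $T^\ast_{\sep} \otimes \ZZ/n\ZZ$ and $\Hom(T^\ast_{\sep}, \ZZ/n\ZZ)$. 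Since $T^\ast_{\sep}$ is a finitely generated free $\ZZ$-module, $\Hom_{\ZZ}(T^\ast_{\sep}, \ZZ/n\ZZ) \cong T^\ast_{\sep}/n \otimes_{\ZZ}$ (using the cocharacter lattice), and one checks this isomorphism is $\Gamma$-equivariant; taking $\Gamma$-invariants then yields $H^0(F, T^\ast_{\sep}/(T^\ast_{\sep})^n)$ as claimed. The identification of an invariant attached to a character $\chi \in T^\ast_{\sep}$ with the composite $T \xrightarrow{\chi} \Gm \xrightarrow{(-)\bmod n} K^M_1/n$ makes the $\Gamma$-equivariance transparent.

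The characteristic-free aspect requires care precisely where Merkurjev's argument uses Kummer theory or assumes $\mu_n \subset F$: when $p = \chara(F)$ divides $n$, the functor $K^M_1/n$ still makes sense and is still a cycle module, but $\Gm$ is not smooth-killed by $n$ in the naive étale sense. The main obstacle I anticipate is the split-case computation $\Inv^0_{\hom}(\Gm, K^M_1/n) \cong \ZZ/n\ZZ$ in the presence of $p \mid n$: one cannot appeal to $\bmu_n$-torsors and étale cohomology. Instead I would argue entirely at the level of the functor $L \mapsto L^\times/(L^\times)^n$, using the generic-point element $t \in F(t)^\times$ and rational specializations $t \mapsto a$ for $a \in L^\times$ to pin down the invariant, together with the homomorphism property to reduce to understanding $\Nat_{\hom}(\Gm, \Gm/n)$ — and here the essential point is that any natural endomorphism of $L \mapsto L^\times$ compatible with all field maps is multiplication by a fixed integer (a statement insensitive to characteristic, provable by tracking where the generic unit goes and using multiplicativity together with base change to $L(t)$). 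Once that "rigidity of $\Gm$" lemma is in hand, the reduction to the split torus and the Galois descent are formal.
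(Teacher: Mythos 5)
Your proposal has a genuine gap at the central descent step. You assert that
\[
\Inv^{0}_{\hom}(T, K^{M}_{1}/n) \;=\; \Inv^{0}_{\hom}(T_{F_{\sep}}, K^{M}_{1}/n)^{\Gamma}
\]
as though it were formal, but the coefficient functor $K^{M}_{1}/n$ does \emph{not} satisfy Galois descent: for a Galois extension $L'/L$ the natural map $L^{\times}/(L^{\times})^{n} \to H^{0}\bigl(\Gal(L'/L), (L')^{\times}/((L')^{\times})^{n}\bigr)$ is neither injective nor surjective in general. Consequently, given a $\Gamma$-fixed invariant $I$ of $T_{F_{\sep}}$ and a point $\alpha \in T(L)$ for $L \in \Fields_{F}$, you have no way to produce a value in $L^{\times}/(L^{\times})^{n}$: evaluating $I$ at a compositum $L \cdot F_{\sep}$ lands in the wrong group, and the $\Gamma$-fixed classes there do not descend to $L$. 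Injectivity of the restriction map is also not free: it reduces to showing that a class in the kernel of the divisor map over $F(T)$ that dies in $F_{\sep}(T)$ is trivial, which requires the Galois cohomology diagram chase the paper carries out in Theorem \ref{GalFixedTorsors}, using $\Cl(A_{\sep}) = 0$ and $T_{\sep}^{\ast}[n] = 0$.

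The paper sidesteps exactly this difficulty by descending a \emph{concrete algebraic object} rather than the abstract invariant group. It first identifies $\Inv^{0}_{\hom}(T, K^{M}_{1}/n)$ with $\Psi_{\nm}(A, n)$, the group of normalized pairs $(\mathcal{L}, \varphi)$ of line bundles with trivialized $n$-th tensor power on $T$ (Theorem \ref{TypeZeroInvariantsReductiveModN}, via the generic-point injectivity of Proposition \ref{TypeZeroGenericSet} and the multiplicativity formula of Theorem \ref{PointsMultTorsors}); and then proves $\Psi_{\nm}(A, n) \cong H^{0}(F, \Psi_{\nm}(A_{\sep}, n))$ by an explicit long exact sequence argument (Theorem \ref{GalFixedTorsors} and Lemma \ref{ClassGroupCohomology}). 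Since the objects being descended are defined over $F$, the invariant values automatically land in $L^{\times}/(L^{\times})^{n}$ --- this is precisely the piece of structure your proposal is missing. Your ``rigidity of $\Gm$'' step is closer to being correct, but note that the injectivity of evaluation at the generic point is not elementary: the paper uses the cycle module axioms (specialization and \cite[Lemma 1.3]{MerkurjevIAG}), and the dominance of the $n$-th power map on a reductive group, and you would need comparable input. As written, however, the descent step is where the proposal breaks, and filling it would essentially reproduce Sections \ref{MuNTorsors}--\ref{TypeZeroInvariants} of the paper.
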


The results we have obtained above follow a rich history of work on cohomological invariants: here are a few related recent examples. In \cite{Totaro}, Totaro computed all mod $p$ cohomological invariants for many important affine group schemes in characteristic $p$; in particular, under the assumption that $\chara(F) = p > 0$, Totaro independently computed $\Inv^{1}(G, K^{M}_{1}/p)$ for \emph{any} affine group scheme (\cite[Theorem 12.2]{Totaro}). The computation of invariants for smooth linear algebraic groups with values in $H^{2}(-, \QQ/\ZZ(1))$ was carried out by Alexandre Lourdeaux in \cite{Lourdeaux}.

\subsection{Acknowledgments} I would like to express my gratitude to my advisor, Alexander Merkurjev, for his advice, encouragement, and many helpful meetings. I am also grateful to Bar Roytman, David Hemminger, Will Baker, and Burt Totaro for helpful conversations. 

\subsection{Notation and Conventions} Throughout, $F$ denotes a fixed base field of arbitrary characteristic, and $F_{\sep}$ denotes a fixed separable closure. We put $\Gamma = \Gal(F_{\sep}/F)$. If $G$ is a group scheme over $F$, we write $G_{\sep}$ to denote the base change of $G$ to $F_{\sep}$, and $G^{\ast}$ to denote the character group of $G$. For an abelian group $A$ and a positive integer $n$, we write $A[n]$ to denote the subgroup of $n$-torsion elements of $A$. All group schemes are affine unless otherwise indicated. For any group scheme $G$ over $F$ and any $F$-algebra $R$, we write $\varepsilon_{R}$ to denote the identity element of $G(R)$. If $\varphi \colon Q \to Q'$ is a morphism of commutative $F$-group schemes, we write $Q^{\varphi}$ to denote the image of the embedding $Q \to Q \times Q'$ induced by $\Id_{Q}$ and the composition of $\varphi$ with the inversion map $Q' \to Q'$. For an $F$-scheme $X$, we write $\Tors_{\ast}(\varphi)(X)$ to denote the morphism $\Tors(X, Q) \to \Tors(X, Q')$ induced by $\varphi$. Likewise, if $f \colon Y \to X$ is a morphism of $F$ schemes, we write $\Tors^{\ast}(f)(Q)$ to denote the pullback morphism $\Tors(X, Q) \to \Tors(Y, Q)$.

\section{An Outline of the Argument} \label{Outline}

In this section, we give a structural overview of our argument.

\subsection{Resolution by Tori} \label{Versal}

Recall that a group scheme $G$ over $F$ is said to be {\bf diagonalizable} if the natural embedding $G^{\ast} \to F[G]^{\times}$ induces an isomorphism of Hopf $F$-algebras $F\langle G^{\ast} \rangle \to F[G]$, where $F\langle G^{\ast} \rangle$ denotes the group algebra of $G^{\ast}$ over $F$. As noted in the introduction, a group scheme $G$ over $F$ is a {\bf group of multiplicative type} if $G_{\sep}$ is diagonalizable over $F_{\sep}$. The functors 
\[G \longmapsto G_{\sep}^{\ast}, \qquad M \longmapsto (F_{\sep}\langle M \rangle)^{\Gamma}\]
define a short exact sequence-preserving equivalence between the category of (algebraic) groups of multiplicative type over $F$ and the category of (finitely generated) $\Gamma$-modules (\cite[Theorem 12.23]{MilneAG}). Under this equivalence, the full subcategory of diagonlizable $F$-group schemes is equivalent to the subcategory of $\Gamma$-modules with trivial $\Gamma$-action. 

When $G$ is an algebraic group of multiplicative type, $G$ may be embedded in a quasisplit torus $P$ such that every $G_{L}$ torsor over a field $L/F$ is the pullback of the $G$-torsor $P \to P/G$ along an $L$-point of $P/G$. Indeed, since $G_{\sep}^{\ast}$ is finitely generated, it admits a surjective morphism of $\Gamma$-modules $W \to G_{\sep}^{\ast}$ from a permutation $\Gamma$-module $W$. If $S$ denotes the kernel of this map, then let $P, T$ be the groups of multiplicative type respectively associated to $W, S$. Note that $P$ is a quasisplit torus, $T$ is a torus, and the exact sequence 
\[1 \to S \to W \to G_{\sep}^{\ast} \to 1\]
of $\Gamma$-modules yields an exact sequence
\begin{equation} \label{ResolutionByTori}  
1 \to G \xrightarrow{~f~} P \xrightarrow{~g~} T \to 1
\end{equation}
of $F$-group schemes. We will call such an exact sequence \ref{ResolutionByTori} a {\bf resolution of $G$ by tori}.

For every field extension $L/F$, the exact sequence on points $1 \to G(L) \to P(L) \to T(L)$ may be continued as follows. Let $\rho(L) \colon T(L) \to \Tors(L, G_{L})$ be the group homomorphism sending a point $\alpha \in T(L)$ to the pullback of the $G$-torsor $P \to T$ along $\alpha$. One may check that the sequence
\begin{equation} \label{ExactSeqTors} 
1 \to G(L) \xrightarrow{~f(L)~} P(L) \xrightarrow{~g(L)~} T(L) \xrightarrow{~\rho(L)~} \Tors(L, G_{L}) \xrightarrow{~\Tors_{\ast}(f_{L})~} \Tors(L, P_{L})
\end{equation}
is exact; we note that this does not depend on the fact that $G, P, T$ are of multiplicative type, and can be proven for any exact sequence of commutative group schemes. Since $P_{L}$ is a quasisplit torus, every $P_{L}$-torsor over $\Spec(L)$ is trivial. Therefore, the map $\rho(L) \colon T(L) \to \Tors(L, G_{L})$ is surjective.

The surjectivity of $\rho(L)$ allows us to relate type-one invariants for $G$ to type-zero invariants for tori, which are well understood for certain functors $H$. As $L$ varies over all field extensions of $F$, the morphisms $\rho(L)$ define a morphism of functors $\rho \colon T \to \Tors(-, G)$, which gives rise to a map $\Inv(\rho, H) \colon \Inv^{1}_{\hom}(G, H) \to \Inv^{0}_{\hom}(T, H)$ given by composition with $\rho$. Likewise, the group homomorphism $g \colon P \to T$ is a natural transformation of group-valued functors, and so induces a map $\Inv(g, H) \colon \Inv^{0}_{\hom}(T, H) \to \Inv^{0}_{\hom}(P, H)$ given by composition with $g$. The exactness of \ref{ExactSeqTors} shows that the resulting sequence
\begin{equation} \label{ExactSeqInvariants} 
1 \to \Inv^{1}_{\hom}(G, H) \xrightarrow{~\Inv(\rho, H)~} \Inv^{0}_{\hom}(T, H) \xrightarrow{~\Inv(g, H)~} \Inv^{0}_{\hom}(P, H)
\end{equation}
is exact. To describe $\Inv^{1}_{\hom}(G, H)$, it therefore suffices to determine the image of $\Inv(\rho, H)$ in $\Inv^{0}_{\hom}(T, H)$. 

\subsection{The Argument} \label{Pushforward}

Fix a positive integer $n$, let $G, P, T$ be as in the exact sequence \ref{ResolutionByTori}, and let $H = K^{M}_{1}/n$. For any group scheme $Q$ over $F$, we say that a class $V \in \Tors(Q, G)$ is {\bf normalized} if the pullback of $V$ along $\varepsilon_{F}\in Q(F)$ represents the trivial class in $\Tors(F, G)$. Let $\Tors_{\nm}(Q, G)$ denote the subgroup of normalized $G$-torsors over $Q$. 

Consider the map $\upsilon_{n}(G) \colon G^{\ast}[n] \to \Tors_{\nm}(T, \bmu_{n, F})$ which sends a character $\chi \in G^{\ast}[n]$ to $(\Tors_{\ast}(\chi)(T))(P \to T)$. We note that $\upsilon_{n}(G)$ is a group homomorphism. Indeed, for any $F$-scheme $X$, the map $\Tors(X, G) \times \Tors(X, G) \to \Tors(X, G \times G)$ sending a pair of representatives $E_{1} \to X, E_{2} \to X$ to the universal map $E_{1} \times E_{2} \to X$ is a group isomorphism. If $\Delta_{G} \colon G \to G \times G$ denotes the diagonal map, and $m_{G} \colon G \times G \to G$ denotes the group multiplication, then up to the preceding identification, $\Tors_{\ast}(m_{G})(X)$ is the group operation, and $\Tors_{\ast}(\Delta_{G})(X)$ is the diagonal embedding. Hence, if $\chi, \chi' \in G^{\ast}[n]$, then we have $\Tors_{\ast}(\chi\chi')(X) = \Tors_{\ast}(\chi)(X) + \Tors_{\ast}(\chi')(X)$, since $\chi\chi'$ factors as $m_{\bmu_{n, F}} \circ (\chi \times \chi') \circ \Delta_{G}$. This argument also explains why $\Phi(G, n)$ is a group homomorphism. 

Suppose we were armed with the following facts:

\begin{enumerate}[(1)]
\item The sequence
\begin{equation} \label{CharactersToTorsors} 
1 \to G^{\ast}[n] \xrightarrow{~\upsilon_{n}(G)~} \Tors_{\nm}(T, \bmu_{n, F}) \xrightarrow{~\Tors^{\ast}(g)(\bmu_{n, F})~} \Tors_{\nm}(P, \bmu_{n, F})
\end{equation}
is exact. 
\item For any smooth, connected, reductive group $R$ over $F$, there is a group isomorphism $\tilde{\Lambda}_{n}(R) \colon \Tors_{\nm}(R, \bmu_{n, F}) \to \Inv^{0}_{\hom}(R, K^{M}_{1}/n)$. 
\item The diagram
\begin{center}
	\begin{tikzcd}
	{G^{\ast}[n]} \arrow[rr, "\upsilon_{n}(G)"] \arrow[dd, "{\Phi(G, n)}"] &  & {\Tors_{\nm}(T, \bmu_{n, F})} \arrow[rr, "{\Tors^{\ast}(g)(\bmu_{n, F})}"] \arrow[dd, "\tilde{\Lambda}_{n}(T)"] &  & {\Tors_{\nm}(P, \bmu_{n, F})} \arrow[dd, "\tilde{\Lambda}_{n}(P)"] \\
	&  &                                                                        &  &                                                                                                                 &  &                                                                    \\
	{\Inv^{1}_{\hom}(G, K^{M}_{1}/n)} \arrow[rr, "{\Inv(\rho, K^{M}_{1}/n)}"']            &  & {\Inv^{0}_{\hom}(T, K^{M}_{1}/n)} \arrow[rr, "{\Inv(g, K^{M}_{1}/n)} "']                                                        &  & {\Inv^{0}_{\hom}(P, K^{M}_{1}/n)}                                 
	\end{tikzcd}
\end{center}
commutes. 
\end{enumerate}

If these three statements hold, then an easy diagram chase using the exactness of \ref{CharactersToTorsors} and \ref{ExactSeqInvariants} shows that $\Phi(G, n)$ is an isomorphism. The remainder of this paper is dedicated to proving these three facts, and carefully explaining why the induced map $\Phi(G)$ is an isomorphism. The remaining sections are organized as follows.

Section \ref{MuNTorsors} gives a thorough treatment of $\bmu_{n, F}$-torsors, laying the ground work for facts $(1)$ and $(2)$. We will provide a Galois theoretic-interpretation of the group $\Tors_{\nm}(T, \bmu_{n, F})$ which will allow us to interpret sequence \ref{CharactersToTorsors} as an exact sequence arising in Galois cohomology in section \ref{TypeOneInvariants}. We will also prove a pullback formula for $\bmu_{n, F}$-torsors over a smooth, connected, reductive group $R$ which shows that normalized $\bmu_{n, F}$-torsors over $R$ give rise to homomorphic type-zero invariants of $R$. 

Section \ref{TypeZeroInvariants} is devoted to constructing the map $\tilde{\Lambda}_{n}(R)$ for any smooth, connected, reductive group $R$, and proving it is a group isomorphism. We also give a description of type-zero invariants for $R$ with values in $K^{M}_{1} \otimes_{\ZZ} \QQ/\ZZ$.

The final section (\ref{TypeOneInvariants}) will prove facts (1) and (3), yielding Theorem \ref{BigA}. As noted, we will then deduce Theorem \ref{BigB} from Theorem \ref{BigA} via a detailed examination of $\Phi(G)$.

\section{$\mu_{n, F}$-Torsors} \label{MuNTorsors}

Throughout this section, let $n$ denote a fixed positive integer. As indicated in the previous section, an essential ingredient in the proof of Theorem \ref{TypeOneGMTModN} is a robust understanding $\bmu_{n, F}$-torsors over an $F$-scheme $X$. In this section, we recall several well-known characterizations of $\bmu_{n, F}$-torsors. Our main results are Theorems \ref{TorsKerDivMap}, \ref{PointsMultTorsors} and \ref{GalFixedTorsors}. Theorem \ref{TorsKerDivMap} explains that when $G$ is a smooth, connnected group, $\Tors(G, \bmu_{n, F})$ may be identified with the kernel of the divisor map $\partial_{n}(G) \colon F(G)^{\times}/(F(G)^{\times})^{n} \to \Div(G)/n\Div(G)$. Under the further assumption that $G$ is reductive, Theorem \ref{PointsMultTorsors} proves a formula relating the pullbacks of a class in $\Tors(G, \bmu_{n, F})$ along points $\alpha, \beta \in G(M)$ to its pullback along the product $\alpha\beta \in G(M)$, where $M$ is a field extension of $F$. Theorem \ref{GalFixedTorsors} computes the Galois fixed points of $\Tors_{\nm}(G_{\sep}, \bmu_{n, F_{\sep}})$ when $G$ is geometrically integral, $G_{\sep}^{\ast}$ is torsion-free, and $G_{\sep}$ has trivial divisor class group.

\subsection{The Group $\Psi(A, n)$} \label{BundlesAreTorsors}

\begin{defn}
	For any commutative ring $A$, let $\Psi(A, n)$ denote the set of equivalence classes of pairs $(\mathcal{L}, \varphi)$, where $\mathcal{L} \in  \Pic(A)[n]$, $\varphi$ is an $A$-module isomorphism $\mathcal{L}^{\otimes n} \to A$, and two pairs $(\mathcal{L}, \varphi), (\mathcal{L}', \varphi')$ are equivalent if and only if there is an isomorphism of $A$-modules $\rho \colon \mathcal{L} \to \mathcal{L}'$ such that $\varphi' \circ \rho^{\otimes n} = \varphi$. 
\end{defn}

We record the following observations about $\Psi(A, n)$, which are straightforward to check:

\begin{enumerate}[(1)]
	\item The tensor product induces a group operation on $\Psi(A, n)$: one defines the product of classes $[(\mathcal{L}, \varphi)], [(\mathcal{L}', \varphi')] \in \Psi(A, n)$ to be $[(\mathcal{L} \otimes_{A} \mathcal{L}', \varphi \otimes_{A} \varphi')]$, where $\varphi \otimes_{A} \varphi'$ really refers to the composition
	\[(\mathcal{L} \otimes_{A} \mathcal{L}')^{\otimes n} \xrightarrow{~\sim~} (\mathcal{L}^{\otimes n}) \otimes_{A} (\mathcal{L}')^{\otimes n} \xrightarrow{~\varphi \otimes_{A} \varphi'~} A \otimes_{A} A \xrightarrow{~\sim~} A.\]
	The identity class is represented by the pair $(A, \Id_{A})$, and the inverse of a class $[(\mathcal{L}, \varphi)]$ is given by $[(\mathcal{L}^{\ast}, (\varphi^{-1})^{\ast})]$, where $\mathcal{L}^{\ast}$ is the dual bundle to $\mathcal{L}$, and $(\varphi^{-1})^{\ast}$ is the composition
	\[(\mathcal{L}^{\ast})^{\otimes n} \xrightarrow{~\sim~} (\mathcal{L}^{\otimes n})^{\ast} \xrightarrow{~(\varphi^{-1})^{\ast}~} A^{\ast} \xrightarrow{~\sim~} A. \]
	\item For any ring morphism $f \colon A \to B$, extension of scalars induces a group morphism $\Psi(-, n)(f) \colon \Psi(A, n) \to \Psi(B, n)$ sending $[(\mathcal{L}, \varphi)]$ to $[(\mathcal{L} \otimes_{A} B, \varphi \otimes_{A} \Id_{B})]$, where $\varphi \otimes_{A} \Id_{B}$ really denotes the composition
	\[(\mathcal{L} \otimes_{A} B)^{\otimes n} \xrightarrow{~\sim~} \mathcal{L}^{\otimes n} \otimes_{A} B \xrightarrow{\varphi \otimes_{A} \Id_{B}} A \otimes_{A} B \xrightarrow{~\sim~} B.\]
	In this way, the association $A \mapsto \Psi(A, n)$ defines a functor $\Psi(-, n)$ from $\Rings$ to $\AbGrps$.
	\item For any positive integer $m$ with $n$ dividing $m$, there is a morphism of functors $\omega_{n, m} \colon \Psi(-, n) \to \Psi(-, m)$ defined for a commutative ring $A$ by $\omega_{n, m}(A)[(\mathcal{L}, \varphi)] = [(\mathcal{L}, \varphi^{\otimes m/n})]$, where by $\varphi^{m/n}$ we mean the composition of isomorphisms
	\[\mathcal{L}^{\otimes m} \xrightarrow{~\sim~} (\mathcal{L}^{\otimes n})^{\otimes m/n}\xrightarrow{~\varphi^{\otimes m/n}~} A^{\otimes m/n} \xrightarrow{~\sim~} A.\] 
\end{enumerate}

There is a convenient way to produce elements of $\Psi(A, n)$ which can be described as follows. Fix an element $y \in A^{\times}$, and consider the $A$-algebra $R_{y} := A[X]/\langle X^{n} - y \rangle$; we denote the residue class of $X$ in $R_{y}$ by $y^{1/n}$. If $\mathcal{L}_{y}$ denotes the free $A$-submodule of $R_{y}$ generated by $y^{1/n}$, one immediately sees that the ``multiplication'' map $\varphi_{y} \colon \mathcal{L}_{y}^{\otimes n} \to A$ sending $x_{1}y^{1/n} \otimes \cdots \otimes x_{n}y^{1/n}$ to $yx_{1}x_{2} \cdots x_{n}$ is an isomorphism of $A$-modules, and the pair $(\mathcal{L}_{y}, \varphi_{y})$ represents a class in $\Psi(A, n)$. 

One readily checks that the map $A^{\times} \to \Psi(A, n)$ sending $y \in A^{\times}$ to $[(\mathcal{L}_{y}, \varphi_{y})]$ is a group homomorphism whose kernel is exactly $(A^{\times})^{n}$, and so we obtain a well-defined injective group morphism $\Delta_{n}(A) \colon A^{\times}/(A^{\times})^{n} \to \Psi(A, n)$. Moreover, this collection of maps is functorial in $A$: in other words, if $\mathcal{K}^{n}$ denotes the functor from $\Rings$ to $\AbGrps$ sending a commutative ring $A$ to $A^{\times}/(A^{\times})^{n}$, the collection of maps $\Delta_{n}(A)$ as $A$ varies defines a natural transformation $\Delta_{n} \colon \mathcal{K}^{n} \to \Psi(-, n)$. 

On the other hand, for any commutative ring $A$, there is a well-defined surjective group homomorphism $\Theta_{n}(A) \colon \Psi(A, n) \to \Pic(A)[n]$ which sends a class $[(\mathcal{L}, \varphi)] \in \Psi(A, n)$ to $[\mathcal{L}]$, and the collection of such $\Theta_{n}(A)$ as $A$ varies likewise determines a natural transformation $\Theta_{n} \colon \Psi(-, n) \to \Pic(-)[n]$. The relationship between $\Delta_{n}$ and $\Theta_{n}$ is explained by the following proposition. 

\begin{prop} \label{MuTorsExactSeq}  
	For any commutative ring $A$, the sequence
	\[1 \to A^{\times}/(A^{\times})^{n} \xrightarrow{~\Delta_{n}(A)~} \Psi(A, n) \xrightarrow{~\Theta_{n}(A)~} \Pic(A)[n] \to 0\]
	is exact. 
\end{prop}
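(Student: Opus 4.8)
The plan is to check the three exactness conditions in turn, two of which are immediate from observations recorded just above the statement. Exactness at $A^{\times}/(A^{\times})^{n}$ is exactly the injectivity of $\Delta_{n}(A)$, which was already established (the kernel of $A^{\times} \to \Psi(A, n)$, $y \mapsto [(\mathcal{L}_{y}, \varphi_{y})]$, is precisely $(A^{\times})^{n}$). Exactness at $\Pic(A)[n]$ is the surjectivity of $\Theta_{n}(A)$, which was likewise already noted. So the only real content is exactness at $\Psi(A, n)$, i.e. the equality $\im(\Delta_{n}(A)) = \ker(\Theta_{n}(A))$.

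For the inclusion $\im(\Delta_{n}(A)) \subseteq \ker(\Theta_{n}(A))$: any class in the image of $\Delta_{n}(A)$ is represented by a pair $(\mathcal{L}_{y}, \varphi_{y})$ with $\mathcal{L}_{y}$ a \emph{free} rank-one $A$-module, so $\Theta_{n}(A)[(\mathcal{L}_{y}, \varphi_{y})] = [\mathcal{L}_{y}] = 0$ in $\Pic(A)$. For the reverse inclusion, I would take a class $[(\mathcal{L}, \varphi)] \in \ker(\Theta_{n}(A))$, so that $\mathcal{L}$ is free of rank one, and fix a generator $\ell \in \mathcal{L}$. Then $\mathcal{L}^{\otimes n}$ is free of rank one on $\ell^{\otimes n}$, and since $\varphi \colon \mathcal{L}^{\otimes n} \to A$ is an isomorphism, the element $u := \varphi(\ell^{\otimes n})$ lies in $A^{\times}$. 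I claim $(\mathcal{L}, \varphi)$ is equivalent to $(\mathcal{L}_{u}, \varphi_{u})$: let $\rho \colon \mathcal{L}_{u} \to \mathcal{L}$ be the $A$-module isomorphism determined by $\rho(u^{1/n}) = \ell$. Then $\rho^{\otimes n}$ sends $(u^{1/n})^{\otimes n}$ to $\ell^{\otimes n}$, so $\varphi \circ \rho^{\otimes n}$ and $\varphi_{u}$ are $A$-linear maps out of the free rank-one module $\mathcal{L}_{u}^{\otimes n}$ which agree on the generator $(u^{1/n})^{\otimes n}$ (both send it to $u$), hence they coincide. By the definition of the equivalence relation on $\Psi(A, n)$ (and its symmetry) this yields $[(\mathcal{L}, \varphi)] = [(\mathcal{L}_{u}, \varphi_{u})] = \Delta_{n}(A)(\overline{u})$, where $\overline{u}$ denotes the class of $u$ in $A^{\times}/(A^{\times})^{n}$. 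Thus $\ker(\Theta_{n}(A)) \subseteq \im(\Delta_{n}(A))$, and the sequence is exact.

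There is no serious obstacle here; the argument is a direct unwinding of definitions. The only points requiring a little care are the direction of the isomorphism $\rho$ demanded by the relation defining $\Psi(A, n)$, and the fact that $n$-fold tensor powers of free rank-one $A$-modules behave as expected, so that checking equality of two $A$-linear maps on a single generator suffices. One could also phrase the ``kernel lies in the image'' step more invariantly without choosing a generator $\ell$, but the generator-based version is the cleanest.
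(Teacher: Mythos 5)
Your proof is correct and follows essentially the same approach as the paper: both reduce to the nontrivial inclusion $\ker(\Theta_{n}(A)) \subseteq \im(\Delta_{n}(A))$ by observing that a free $\mathcal{L}$ forces $\varphi$ to be given by a unit, and then exhibiting an explicit equivalence with some $(\mathcal{L}_{y}, \varphi_{y})$. The only cosmetic difference is that you fix a generator $\ell$ and read off $u = \varphi(\ell^{\otimes n})$ directly, whereas the paper fixes a trivialization $\psi\colon\mathcal{L}\to A$ and extracts the unit from $\psi^{\otimes n}\circ\varphi^{-1}$, then inverts it; your version is a touch more streamlined.
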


\begin{proof}
	The inclusion $\Imm(\Delta_{n}(A)) \subset \ker(\Theta_{n}(A))$ is immediate, since $\mathcal{L}_{y}$ is a free $A$-module for any $y \in A^{\times}$ by construction. Suppose that $(\mathcal{L}, \varphi) \in \ker(\Theta_{n}(A))$, i.e. that $\mathcal{L}$ is free. Let $\psi \colon \mathcal{L} \to A$ be an isomorphism of $A$-modules. Consider the composition of isomorphisms
	\[A \xrightarrow{~\varphi^{-1}~} \mathcal{L}^{\otimes n} \xrightarrow{~\psi^{\otimes n}} A^{\otimes n} \xrightarrow{~\sim~} A. \]
	Every $A$-module isomorphism $A \to A$ is given by multiplication by some invertible element of $A$, so the composition above is multiplication by $x$ for some $x \in A^{\times}$. Put $y = x^{-1}$, and let $\alpha \colon A \to \mathcal{L}_{y}$ be the isomorphism sending $a$ to $ay^{1/n}$. Then one easily checks that $\alpha \circ \psi$ is an isomorphism between $(\mathcal{L}, \varphi)$ and $(\mathcal{L}_{y}, \varphi_{y})$. 
\end{proof} 

\begin{cor}\label{KMuNTorsors}
	If $\Pic(A)[n] = 0$, then $\Delta_{n}(A)$ is an isomorphism. $\qed$
\end{cor}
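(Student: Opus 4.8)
The plan is to read the statement off directly from Proposition \ref{MuTorsExactSeq}, together with the injectivity of $\Delta_{n}(A)$ established just before that proposition. Recall that the map $A^{\times}/(A^{\times})^{n} \to \Psi(A,n)$ was shown to be injective by an explicit computation of its kernel (namely $(A^{\times})^{n}$), so $\Delta_{n}(A)$ is \emph{always} a monomorphism; the only thing that requires the hypothesis $\Pic(A)[n] = 0$ is surjectivity.

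For surjectivity I would invoke the exact sequence
\[1 \to A^{\times}/(A^{\times})^{n} \xrightarrow{~\Delta_{n}(A)~} \Psi(A, n) \xrightarrow{~\Theta_{n}(A)~} \Pic(A)[n] \to 0\]
supplied by Proposition \ref{MuTorsExactSeq}. Under the assumption $\Pic(A)[n] = 0$ the rightmost term vanishes, so $\Theta_{n}(A)$ is the zero map and $\ker(\Theta_{n}(A)) = \Psi(A, n)$. Exactness at $\Psi(A,n)$ then gives $\Imm(\Delta_{n}(A)) = \ker(\Theta_{n}(A)) = \Psi(A, n)$, i.e. $\Delta_{n}(A)$ is onto. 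Combined with injectivity, this shows $\Delta_{n}(A)$ is a group isomorphism, as claimed.

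There is essentially no obstacle here: all the content sits in Proposition \ref{MuTorsExactSeq}, and the corollary is a purely formal consequence of its exactness. If one preferred to avoid citing the proposition, one could argue directly: given a class $[(\mathcal{L}, \varphi)] \in \Psi(A, n)$, the hypothesis forces $[\mathcal{L}] \in \Pic(A)[n] = 0$, so $\mathcal{L}$ is free, and then the trivialization argument appearing in the proof of Proposition \ref{MuTorsExactSeq} produces a $y \in A^{\times}$ with $[(\mathcal{L}, \varphi)] = \Delta_{n}(A)(y)$. Since this merely reruns that proof, invoking Proposition \ref{MuTorsExactSeq} directly is the cleaner route, and I would present the proof in the one-line form above.
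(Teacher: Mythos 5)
Your proof is correct and takes exactly the route the paper intends: the corollary is stated with a $\qed$ precisely because it falls out immediately from the exact sequence in Proposition \ref{MuTorsExactSeq} once $\Pic(A)[n]=0$ kills the right-hand term. Nothing to add.
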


Suppose now that $A$ is an $F$-algebra. To any element $[(\mathcal{L}, \varphi)]$ of $\Psi(A, n)$, one may associate a $\ZZ/n\ZZ$-graded $A$-algebra $\Tw(\mathcal{L}, \varphi)$. As an $A$-module, we set 
\[\Tw(\mathcal{L}, \varphi) := A \oplus \mathcal{L} \oplus \mathcal{L}^{\otimes 2} \oplus \cdots \oplus \mathcal{L}^{\otimes n-1}.\] 
The multiplicative structure on $\Tw(\mathcal{L}, \varphi)$ is induced by the isomorphisms $\mathcal{L}^{\otimes i} \otimes_{A} \mathcal{L}^{\otimes j} \to \mathcal{L}^{\otimes i+j}$ for $i+j < n$, and $\mathcal{L}^{\otimes i} \otimes_{A} \mathcal{L}^{\otimes j} \to \mathcal{L}^{\otimes n} \otimes_{A} \mathcal{L}^{\otimes (n-(i+j))} \xrightarrow{\varphi \otimes \Id} A \otimes_{A} \mathcal{L}^{\otimes (n-(i+j))} \to \mathcal{L}^{n-(i+j)}$ for $i+j \geqslant n$. Note that the inclusion morphism $A \to \Tw(\mathcal{L}, \varphi)$ is faithfully flat, because $\Tw(\mathcal{L}, \varphi)$ is finitely generated and projective as an $A$-module. In fact, the dual morphism $\Spec(\Tw(\mathcal{L}, \varphi)) \to \Spec(A)$ is a $\bmu_{n, F}$-torsor over $\Spec(A)$, and we can say yet more, as the next theorem explains. Let $\lambda_{n}(A) \colon \Psi(A, n) \to \Tors(\Spec(A), \bmu_{n, F})$ be the set map sending $[(\mathcal{L}, \varphi)]$ to the $\bmu_{n, F}$-torsor class represented by the map $\Spec(\Tw(\mathcal{L}, \varphi)) \to \Spec(A)$.

\begin{thm} \label{PsiTorsors}
	The map $\lambda_{n}(A)$ is a well-defined {\bf group} isomorphism. Moreover, as $A$ varies over all $F$-algebras, the collection of maps $\lambda_{n}(A)$ defines a natural isomorphism $\lambda_{n} \colon \Psi(-, n) \to \Tors(-, \bmu_{n, F})$.   
\end{thm}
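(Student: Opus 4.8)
The plan is to exhibit an explicit inverse to $\lambda_{n}(A)$ and check that it is a group homomorphism, from which the ``isomorphism'' claim follows; naturality will then be routine. First I would verify that $\lambda_{n}(A)$ is well-defined: if $\rho \colon \mathcal{L} \to \mathcal{L}'$ realizes an equivalence $(\mathcal{L}, \varphi) \sim (\mathcal{L}', \varphi')$, then $\rho$ induces a graded $A$-algebra isomorphism $\Tw(\mathcal{L}, \varphi) \to \Tw(\mathcal{L}', \varphi')$ compatible with the $\ZZ/n\ZZ$-gradings, hence a $\bmu_{n,F}$-equivariant isomorphism of torsors. I would also check that $\Spec(\Tw(\mathcal{L},\varphi)) \to \Spec(A)$ really is a $\bmu_{n,F}$-torsor: the $\ZZ/n\ZZ$-grading is precisely a $\bmu_{n,F}$-coaction, faithful flatness was already noted, and the torsor condition $\Tw(\mathcal{L},\varphi) \otimes_A \Tw(\mathcal{L},\varphi) \cong \Tw(\mathcal{L},\varphi) \otimes_{F} F[\bmu_{n,F}]$ can be checked after a faithfully flat base change that trivializes $\mathcal{L}$, where it reduces to the split case $A[X]/(X^n - 1)$.

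Next I would construct the inverse. Given a $\bmu_{n,F}$-torsor $\pi \colon Y \to \Spec(A)$, the $\bmu_{n,F}$-coaction equips $B := \mathcal{O}_Y(Y)$ with a $\ZZ/n\ZZ$-grading $B = \bigoplus_{i \in \ZZ/n\ZZ} B_i$ with $B_0 = A$. The torsor condition forces each $B_i$ to be an invertible $A$-module (this can be checked fppf-locally, where the torsor is split), and multiplication gives isomorphisms $B_i \otimes_A B_j \to B_{i+j}$; in particular $B_1^{\otimes n} \to B_n = B_0 = A$ is an isomorphism $\varphi_B$, so $(B_1, \varphi_B) \in \Psi(A, n)$. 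I would send $[\pi] \mapsto [(B_1, \varphi_B)]$, check this is independent of the chosen isomorphism of torsors, and check it is a two-sided inverse to $\lambda_n(A)$: starting from $(\mathcal{L}, \varphi)$, the degree-one piece of $\Tw(\mathcal{L},\varphi)$ is $\mathcal{L}$ with the induced multiplication $\varphi$, giving one composite; the other composite reconstructs $B$ from its graded pieces via the multiplication maps, which is an isomorphism precisely because the $B_i$ generate $B$ as claimed.

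For the group structure, I would show $\lambda_n(A)$ (equivalently its inverse) is a homomorphism: the Baer sum of torsors $Y, Y'$ corresponds on coordinate rings to the degree-$0$ part, under the diagonal $\bmu_{n,F}$-action, of $B \otimes_A B'$, whose degree-one component is $B_1 \otimes_A B_1'$ with multiplication $\varphi_B \otimes \varphi_{B'}$ — exactly the product in $\Psi(A,n)$. Equivalently, one checks directly that $\Tw(\mathcal{L} \otimes_A \mathcal{L}', \varphi \otimes \varphi')$ is the contracted product $\Tw(\mathcal{L},\varphi) \times^{\bmu_{n,F}} \Tw(\mathcal{L}',\varphi')$. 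Finally, naturality: for $f \colon A \to B$ one has a canonical isomorphism $\Tw(\mathcal{L},\varphi) \otimes_A B \cong \Tw(\mathcal{L} \otimes_A B, \varphi \otimes \Id_B)$ respecting gradings, which says exactly that $\lambda_n$ commutes with $\Psi(-,n)(f)$ and the pullback on torsors.

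The main obstacle I anticipate is the structural lemma that in a $\bmu_{n,F}$-torsor $Y \to \Spec(A)$ the graded pieces $B_i$ of $\mathcal{O}_Y(Y)$ are invertible $A$-modules with $B_i \otimes_A B_j \xrightarrow{\sim} B_{i+j}$: this is the crux of identifying $\bmu_{n,F}$-torsors with the ``Kummer data'' $\Psi(A,n)$, and it is what one proves by fppf descent from the split torsor. Everything else — well-definedness, the two composites being identities, homomorphy, naturality — is bookkeeping with gradings and tensor products once this lemma is in hand.
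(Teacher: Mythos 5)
The paper's ``proof'' of this theorem is simply a citation to the Stacks Project (Tag 03PK) and Milne's \emph{\'Etale Cohomology}, and your proposal is essentially a careful write-up of the argument those references give. Your decomposition --- well-definedness via graded algebra isomorphisms, the inverse map via the graded pieces $B_i$ of the torsor's coordinate ring, the key fppf-descent lemma that each $B_i$ is invertible with multiplication isomorphisms $B_i \otimes_A B_j \to B_{i+j}$, Baer sum corresponding to tensor product in $\Psi(A,n)$, and naturality --- is exactly the standard route, and each step is handled correctly.
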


\begin{proof}
	See \cite[\href{https://stacks.math.columbia.edu/tag/03PK}{Tag 03PK}]{Stacks}. Alternatively, see \cite[page 125]{EC}. 
\end{proof}

We note that for any $y \in A^{\times}$, the universal map $A[X]/\langle X^{n} -y \rangle \to \Tw(\mathcal{L}_{y}, \varphi_{y})$ sending $\overline{X}$ to $y^{1/n} \in \mathcal{L}_{y}$ is an isomorphism of $(\ZZ/n\ZZ)$-graded $A$-algebras. Hence, the composition $\lambda_{n}(A) \circ \Delta_{n}(A)$ takes $y \in A^{\times}$ to the class of the $\bmu_{n, F}$-torsor $\Spec(A[X]/\langle X^{n}- y \rangle) \to \Spec(A)$. We put $\mathbf{ \Sigma_{n} := \lambda_{n} \circ \Delta_{n}}$. 

\subsection{Divisors}

When $A$ is a domain, there is another description of $\Psi(A, n)$ in terms of divisors. Let $K$ denote the field of fractions of $A$, and let $\Cart(A)$ denote the group of invertible fractional ideals of $A$. Likewise, if $A$ is a Krull domain, let $\Div(A)$ be the free abelian group generated by the codimension $1$ points of $\Spec(A)$. We write $\dv(A) \colon \Cart(A) \to \Div(A)$ to denote the usual valuation homomorphism which sends a fractional ideal $I$ to the formal sum of its valuations at each height one prime of $A$. We let $\partial(A) \colon K^{\times} \to \Div(A)$ denote the group morphism sending $x \in K^{\times}$ to $\dv(xA)$.   

Consider the set $C(A, n)$ consisting of pairs $(I, f)$ where $I \in \Cart(A)$, and $f \in K^{\times}$ such that $I^{n} = fA$. The binary operation on $C(A, n)$ defined by $(I, f) \cdot (I', f') = (II', ff')$ gives $C(A, n)$ the structure of a group with identity element $(A, 1)$. There is a group homomorphism $K^{\times} \to C(A, n)$ sending $x \in K^{\times}$ to $(xA, x^{n})$, and we set $\Cart(A, n)$ to be the cokernel of this morphism. 

If we further assume that $A$ is a Krull domain, then there is an analagous construction $\Div(A, n)$. If $D(A, n)$ denotes the set of pairs $(D, g)$ where $D \in \Div(A)$ and $g \in K^{\times}$ such that $\partial(A)(g) = nD$, then $\Div(A, n)$ is defined to be the cokernel of group homomorphism $K^{\times} \to D(A, n)$ which sends $x \in K^{\times}$ to the pair $(\partial(A)(x), x^{n})$. One may check that the map $\dv(A) \colon \Cart(A) \to \Div(A)$ described above descends to a group morphism $\dv_{n}(A) \colon \Cart(A, n) \to \Div(A, n)$, and this map is an isomorphism if $A$ is regular. 
	
For any element $I \in \Cart(A)$, the multiplication map $I^{\otimes n} \to I^{n}$ is an isomorphism of $A$-modules, since $I$ is projective of rank $1$. Given a pair $(I, f)$ in $C(A, n)$, we may produce a pair $(I, m_{f})$ which represents a class in $\Psi(A, n)$, where $m_{f}$ is the composition of $A$-isomorphisms $I^{\otimes n} \xrightarrow{~\sim~} I^{n} \xrightarrow{~\cdot f^{-1}~} A$. One may check that the resulting set map $\Omega(A) \colon C(A, n) \to \Psi(A, n)$ sending a pair $(I, f)$ to $[(I, m_{f})]$ is a group homomorphism.

\begin{prop}
	The morphism $\Omega(A) \colon C(A, n) \to \Psi(A, n)$ is surjective, and the kernel is precisely the image of the group morphism $K^{\times} \to C(A, n)$ sending $x \in K^{\times}$ to $(xA, x^{n})$. Therefore, $\Omega(A)$ descends to a well-defined group isomorphism $\Omega_{n}(A) \colon \Cart(A, n) \to \Psi(A, n)$. 
\end{prop}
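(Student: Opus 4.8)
The plan is to establish surjectivity of $\Omega(A)$ directly, then compute its kernel by an explicit calculation; the isomorphism $\Omega_{n}(A)$ then follows formally, since $\Omega(A)$ is already known to be a group homomorphism. Both halves rest on a single elementary observation, which I will call $(\star)$: \emph{if $J$ is a nonzero fractional ideal of $A$ and $\psi \colon J \to A$ is an $A$-module homomorphism, then $\psi$ is multiplication by a (unique) element of $K$}. To see $(\star)$, pick nonzero $y, z \in J$ and $0 \neq a \in A$ with $ay, az \in A$; commutativity of $K$ gives $az \cdot \psi(y) = \psi\big((az)y\big) = \psi\big((ay)z\big) = ay \cdot \psi(z)$, so the ratio $\psi(z)/z = \psi(y)/y \in K$ is independent of the chosen element, and $\psi$ is multiplication by this scalar.

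\emph{Surjectivity.} Let $[(\mathcal{L}, \varphi)] \in \Psi(A, n)$. Since $\mathcal{L}$ is invertible, it is finitely generated projective of rank one, hence torsion-free, so the canonical map $\mathcal{L} \to \mathcal{L} \otimes_{A} K$ is injective; choosing a $K$-linear isomorphism $\mathcal{L} \otimes_{A} K \cong K$ then identifies $\mathcal{L}$, via some $A$-module isomorphism $\theta \colon \mathcal{L} \xrightarrow{\sim} I$, with an invertible fractional ideal $I \subseteq K$. As $I$ is invertible, the multiplication map $\mu \colon I^{\otimes n} \xrightarrow{\sim} I^{n}$ is an isomorphism, so $\varphi \circ (\theta^{\otimes n})^{-1} \circ \mu^{-1} \colon I^{n} \to A$ is an $A$-module isomorphism; by $(\star)$ it is multiplication by some $c \in K^{\times}$, whence $I^{n} = c^{-1}A$. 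Setting $f = c^{-1}$ gives $(I, f) \in C(A, n)$, and unwinding the definition of $m_{f}$ shows $m_{f} = (\cdot\, f^{-1}) \circ \mu = \varphi \circ (\theta^{\otimes n})^{-1}$, i.e.\ $m_{f} \circ \theta^{\otimes n} = \varphi$. Thus $\theta$ is an isomorphism of pairs $(\mathcal{L}, \varphi) \xrightarrow{\sim} (I, m_{f})$, so $\Omega(A)(I, f) = [(\mathcal{L}, \varphi)]$.

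\emph{The kernel.} For $x \in K^{\times}$, the $A$-module isomorphism $A \to xA$, $a \mapsto ax$, carries the canonical generator of $A^{\otimes n} = A$ to $x \otimes \cdots \otimes x$, which $m_{x^{n}}$ sends to $(x^{n})^{-1}x^{n} = 1$; hence this isomorphism identifies $(A, \Id_{A})$ with $(xA, m_{x^{n}})$, so $(xA, x^{n}) \in \ker \Omega(A)$. Conversely, if $(I, f) \in \ker \Omega(A)$, there is an $A$-module isomorphism $\rho \colon I \xrightarrow{\sim} A$ with $\rho^{\otimes n} = m_{f}$ as maps $I^{\otimes n} \to A^{\otimes n} = A$; by $(\star)$, $\rho$ is multiplication by some $d \in K^{\times}$, and $A = \rho(I) = dI$ forces $I = xA$ with $x := d^{-1}$. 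Evaluating $\rho^{\otimes n} = m_{f}$ on $x \otimes \cdots \otimes x$ gives $1 = f^{-1}x^{n}$, so $f = x^{n}$. Therefore $\ker \Omega(A) = \{(xA, x^{n}) : x \in K^{\times}\}$, which is exactly the subgroup by which $C(A, n)$ is quotiented to form $\Cart(A, n)$. Since $\Omega(A)$ is a group homomorphism, it descends to an isomorphism $\Omega_{n}(A) \colon \Cart(A, n) \xrightarrow{\sim} \Psi(A, n)$.

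\emph{Main obstacle.} There is no deep difficulty here: $(\star)$ is routine, and everything else is bookkeeping with the evident group structures. The one point demanding genuine care is the diagram chase in the surjectivity step: one must track the canonical isomorphisms $\mathcal{L} \cong \mathcal{L} \otimes_{A} K$, $I^{\otimes n} \cong I^{n}$, and $A^{\otimes n} \cong A$ consistently so that $m_{f} = \varphi \circ (\theta^{\otimes n})^{-1}$ holds exactly, not merely up to a unit — a careless choice of $f$ would only identify the two pairs up to the image of $A^{\times}$, which would not suffice.
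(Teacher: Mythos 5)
Your proof is correct and follows essentially the same route as the paper's: both arguments identify $\mathcal{L}$ with an invertible fractional ideal via the embedding $\mathcal{L}\hookrightarrow\mathcal{L}\otimes_A K\cong K$, recover $f$ from the fact that an $A$-linear isomorphism between fractional ideals must be scalar multiplication, and compute the kernel by noting that triviality forces $I$ principal and then evaluating the module map on a generator. The only cosmetic differences are that you isolate this scalar-multiplication fact as the explicit lemma $(\star)$ (the paper uses it implicitly, e.g.\ ``Every $A$-module isomorphism $A\to A$ is given by multiplication by some invertible element of $A$''), you present surjectivity before the kernel, and you work directly with the fractional ideal $I\subseteq K$ without clearing denominators to force $I\subseteq A$ — which is harmless since $\Cart(A)$ consists of all invertible fractional ideals, not just integral ones.
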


\begin{proof}
	Let $m \colon A^{\otimes n} \to A$ denote the multiplication map. If $(I, f) \in C(A, n)$ belongs to $\ker(\Omega(A))$, then there is an isomorphism $\rho \colon A \to I$ such that $m_{f} \circ \rho^{\otimes n} = m$. Then $I$ is principal, generated by $\rho(1) =: x \in K^{\times}$, and 
	\[1 = m(1 \otimes \cdots \otimes 1) = m_{f}(x \otimes \cdots \otimes x) = x^{n}/f,\]
	so $x^{n} = f$, and $(I, f) = (xA, x^{n})$. On the other hand, for any $x \in K^{\times}$, the class $[(xA, m_{x^{n}})]$ in $\Psi(A, n)$ is trivial, via the isomorphism $A \to xA$ sending $1$ to $x$. \\  
	Now, let the pair $(\mathcal{L}, \varphi)$ represent a class in $\Psi(A, n)$. Let $I \subset K$ denote the image of $\mathcal{L}$ under the composition of the $A$-embedding $\mathcal{L} \to \mathcal{L} \otimes_{A} K$ with a fixed $K$-module isomorphism $\mathcal{L} \otimes_{A} K \to K$. After clearing denominators, we may assume that $I \subset A \subset K$, so that $I$ is an ideal of $A$. Since $\mathcal{L}$ is projective of rank $1$, $I$ is an invertible ideal of $A$. \\
	Let $\alpha \colon \mathcal{L} \to I$ denote our $A$-module isomorphism of $\mathcal{L}$ onto $I$. If $f$ denotes the image of $1$ under the sequence of isomorphisms $A \xrightarrow{~\varphi^{-1}~} \mathcal{L}^{\otimes n} \xrightarrow{~\alpha^{\otimes n}~} I^{\otimes n} \xrightarrow{~\sim~} I^{n}$, then one sees that $I^{n} = fA$, and $\alpha$ is an isomorphism between $(\mathcal{L}, \varphi)$ and $(I, m_{f})$.    
\end{proof}

The above proof shows that every element of $\Psi(A, n)$ admits a representative of the form $(I, m_{f})$ where $I \subset A$ is an invertible fractional ideal of $A$ satisfying $I^{n} = fA$ for some nonzero $f \in A$. We will call such a representative an {\bf ideal representative} of a class in $\Psi(A, n)$. 

\begin{cor}\label{NonzeroIdealTors}
	Let $A$ be a normal domain with field of fractions $K$, and let $X$ be a class in $\Psi(A, n)$. Let $M$ be a domain, and let $\alpha_{1}, \ldots, \alpha_{n} \colon A \to M$ be ring morphisms. Then one can choose an ideal representative $(\tilde{I}, m_{\tilde{f}})$ for $X$ such that $\alpha_{i}(\tilde{f}) \in M\setminus \{0\}$ for each $1 \leqslant i \leqslant n$. 
\end{cor}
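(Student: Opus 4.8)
The plan is to exploit the non-uniqueness of ideal representatives. By the remark preceding the statement, $X$ admits an ideal representative $(I, m_f)$ with $I \subseteq A$ an invertible fractional ideal, $I^{n} = fA$, and $0 \ne f \in A$; and two pairs $(I, f), (I', f')$ in $C(A,n)$ represent the same class in $\Psi(A,n)$ precisely when they differ by the image of some $x \in K^{\times}$, i.e. when $(I', f') = (xI, x^{n}f)$. Hence the ideal representatives of $X$ are exactly the pairs $(xI, m_{x^{n}f})$ with $x \in I^{-1} \setminus \{0\}$, the condition $x \in I^{-1}$ being what guarantees $xI \subseteq A$. Writing $\mathfrak{p}_i = \ker(\alpha_i)$ — a proper prime ideal of $A$, since $M$ is a domain and $\alpha_i(1) = 1$ — I want to choose $x$ so that $x^{n}f \notin \mathfrak{p}_i$ for all $i$. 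Because $(x^{n}f)A = (xI)^{n}$ and each $\mathfrak{p}_i$ is prime, this is equivalent to requiring $xI \not\subseteq \mathfrak{p}_i$ for every $i$. So the corollary reduces to a moving lemma: inside the divisor class of $I$, find an integral invertible ideal avoiding a prescribed finite set of primes.

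To produce such an $x$, I would localize away from these primes. Set $T = A \setminus \bigcup_{i=1}^{n} \mathfrak{p}_i$, which is multiplicative since the $\mathfrak{p}_i$ are prime, and put $B = T^{-1}A$. Every prime of $B$ lies below some $\mathfrak{p}_i$ (by prime avoidance for a prime contained in a finite union of primes), so $B$ is semilocal, hence $\Pic(B) = 0$; therefore the invertible fractional $B$-ideal $T^{-1}I$ is free, say $T^{-1}I = B\xi$ for some $0 \ne \xi \in K$. Then $T^{-1}(I^{-1}) = (T^{-1}I)^{-1} = B\xi^{-1}$, and as this is a localization of the $A$-module $I^{-1}$ I may clear denominators to write $\xi^{-1} = x/t$ with $x \in I^{-1} \setminus \{0\}$ and $t \in T$. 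Since $t$ is a unit in $B$, this gives $T^{-1}(xI) = x\,T^{-1}I = tB = B$, and consequently $xI \not\subseteq \mathfrak{p}_i$ for every $i$: otherwise $B = T^{-1}(xI) \subseteq \mathfrak{p}_i B \subsetneq B$. Setting $\tilde I = xI$ and $\tilde f = x^{n}f$ then does the job.

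What remains is routine bookkeeping, which I would record but not belabor: that $\tilde I \subseteq A$ is invertible with $\tilde I^{\,n} = \tilde f A$ and $\tilde f \in A \setminus \{0\}$; that the $A$-module isomorphism $I \to \tilde I$, $a \mapsto xa$, identifies $(I, m_f)$ with $(\tilde I, m_{\tilde f})$ in $\Psi(A,n)$, so $(\tilde I, m_{\tilde f})$ is genuinely an ideal representative of $X$; and the short prime-avoidance step above yielding $\alpha_i(\tilde f) \ne 0$. There is no real obstacle — this is a soft moving lemma — and the only point demanding care is staying \emph{integral} throughout: the chosen $x$ must lie in $I^{-1}$ and not merely in $T^{-1}(I^{-1})$, so that $\tilde I$ is an honest integral ideal, which is exactly why the denominator $t$ is cleared against an element of $T$. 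The sole external input is the standard fact that a finitely generated projective module of constant rank over a semilocal ring is free.
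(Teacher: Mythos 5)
Your proof is correct, and it takes essentially the same central route as the paper's: localize $A$ at the multiplicative set $T = A \setminus \bigcup_i \mathfrak{p}_i$ to produce a semilocal ring $B$ in which the image of $I$ becomes principal, then transport the trivialization back to $A$. The difference is in how the transport step is organized. The paper takes a $B$-generator $y/z$ of $S^{-1}I$, works out a relation $(y/z)^n = fu$ with $u = v/w \in B^\times$, sets $g = vz/y$ and $\tilde{I} = gI$, and then must \emph{separately} verify $\tilde{I} \subseteq A$ by observing that each $gx$ (for $x \in I$) satisfies $(gx)^n \in A$ and is therefore integral over $A$, hence in $A$ by normality. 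You instead localize $I^{-1}$, write $\xi^{-1} = x/t$ with $x \in I^{-1} \setminus \{0\}$ and $t \in T$, and set $\tilde{I} = xI$; the containment $\tilde{I} \subseteq A$ is then automatic from $x \in I^{-1}$, and $\tilde f = x^n f$ lands in $A \setminus \bigcup_i \mathfrak{p}_i$ because $T^{-1}(xI) = B$ forces $xI \not\subseteq \mathfrak{p}_i$ and each $\mathfrak{p}_i$ is prime. This is slightly cleaner: it removes the final integrality argument and, incidentally, does not use the normality hypothesis at all (the existence of an ideal representative, per the remark preceding the corollary, already holds for any domain). The bookkeeping you deferred is all routine and correct: $\tilde f \in \tilde{I}^n \subseteq A$, $\tilde f \neq 0$, and multiplication by $x$ realizes the isomorphism $(I, m_f) \cong (\tilde{I}, m_{\tilde f})$.
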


\begin{proof}
	For each $1 \leqslant i \leqslant n$, put $\mfp_{i} = \ker(\alpha_{i}) \in \Spec(A)$. Let $S$ be the multiplicative subset of $A$ defined by $S = A\setminus\bigcup_{i=1}^{n} \mfp_{i}$; then $B := S^{-1}A$ is a semi-local ring whose maximal ideals are a subset of $\{\mfp_{i}B\}_{i = 1}^{n}$. Let $(I, m_{f})$ be an ideal representative for $X$, and put $J = S^{-1}I$. Since $J$ is a $B$-module of constant rank $1$ and $B$ is semi-local, $J$ is a free $B$-module of rank $1$, hence principal. Say $J$ is generated by $0 \neq y/z \in B$. Since $I^{n} = fA$, we have $J^{n} = fB$, whence
	\[\left(\frac{y}{z}\right)^{n} = f \cdot u \]
	for some unit $u \in B^{\times}$. If $u = v/w$ for $v \in A, w \in S$, we must have $v \in S$ as well. Put $g = vz/y \in K^{\times}$, so that
	\[v^{n-1}w = g^{n}f, \]
	and let $\tilde{I} = gI$, $\tilde{f} = v^{n-1}w \in S \subset A$. Then $\tilde{I}^{n} = g^{n}(I^{n}) = g^{n}fA = v^{n-1}wA$, and the map $I \to \tilde{I}$ given by multiplication by $g$ is an isomorphism between $(I, m_{f})$ and $(\tilde{I}, m_{\tilde{f}})$ in $\Psi(A, n)$. Moreover, $\tilde{f} \in S$, and so $\alpha_{i}(\tilde{f}) \in M\setminus \{0\}$ for each $i$. It remains to show that $\tilde{I} \subset A$. Let $x \in I$; then $(gx)^{n} \in \tilde{I}^{n} \subset A$. But then $gx$ is a root of $X^{n}-(gx)^{n} \in A[X]$, and so is integral over $A$, and therefore belongs to $A$.      
\end{proof}

Notice that if $M$ is a field, then $\Pic(M)$ is trivial, so $\Delta_{n}(M)$ is an isomorphism by Corollary \ref{KMuNTorsors}.

\begin{prop}\label{FieldMorphismTorsImage}
	Let $A$ be a normal domain, and let $M$ be a field. Let $\alpha \colon A \to M$ be a ring morphism. Let $X \in \Psi(A, n)$, and let $(I, m_{f}) \in \Psi(A, n)$ be an ideal representative for $X$ satisfying $\alpha(f) \neq 0$. Then $(\Delta_{n}(M)^{-1} \circ \Psi(-, n)(\alpha))(X) = [\alpha(f)^{-1}].$ 	
\end{prop}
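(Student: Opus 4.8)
The plan is to trace a class $X \in \Psi(A, n)$ through the maps in question using the ideal representative $(I, m_{f})$ and exploit the functoriality established earlier. First I would recall that $\Psi(-, n)(\alpha) \colon \Psi(A, n) \to \Psi(M, n)$ is the extension-of-scalars map, so it sends $[(I, m_{f})]$ to $[(I \otimes_{A} M, m_{f} \otimes_{A} \Id_{M})]$. The key observation is that since $I$ is an invertible ideal of $A$ and $I^{n} = fA$, we have a natural $A$-module isomorphism $I^{\otimes n} \xrightarrow{\sim} I^{n} \xrightarrow{\cdot f^{-1}} A$ defining $m_{f}$; after base change to the field $M$, the module $I \otimes_{A} M$ becomes free of rank one over $M$ (since $M$ is a field, $\Pic(M) = 0$, and $I \otimes_A M$ is projective of rank one). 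So the resulting class in $\Psi(M, n)$ lies in the image of $\Delta_{n}(M)$, which is consistent with the claim that $\Delta_{n}(M)^{-1}$ is defined on it.

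Next I would unwind what generator of $I \otimes_{A} M$ to pick and compute the induced trivialization. Concretely, the condition $\alpha(f) \neq 0$ guarantees that the image of $f$ in $M$ is a unit; I would then produce an explicit $M$-module isomorphism $M \to I \otimes_A M$ — using that $I \otimes_A M$ is a line through which $f$ (thought of via the isomorphism $I^{\otimes n} \cong I^n = fA$) becomes invertible — and chase through the definition of $\Delta_{n}(M)$, which sends $y \in M^{\times}$ to $[(\mathcal{L}_{y}, \varphi_{y})]$ where $\mathcal{L}_y = M \cdot y^{1/n}$ and $\varphi_y(x_1 y^{1/n} \otimes \cdots \otimes x_n y^{1/n}) = y x_1 \cdots x_n$. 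Comparing the base-changed pair $(I \otimes_A M, m_f \otimes \Id_M)$ with $(\mathcal{L}_{\alpha(f)^{-1}}, \varphi_{\alpha(f)^{-1}})$ via the proof technique of Proposition \ref{MuTorsExactSeq}: one takes a generator, measures the discrepancy between $m_f \otimes \Id_M$ and the standard multiplication against that generator, and reads off that the relevant unit is $\alpha(f)$. Since passing from $\varphi$ to $\varphi_y$ in the proof of Proposition \ref{MuTorsExactSeq} sets $y = x^{-1}$ where $x$ is the discrepancy, I expect the $y$ here to come out as $\alpha(f)^{-1}$, matching the claimed answer $[\alpha(f)^{-1}]$.

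The main obstacle I anticipate is bookkeeping the various canonical isomorphisms carefully enough to pin down the sign/inverse in the exponent: it is easy to get $[\alpha(f)]$ versus $[\alpha(f)^{-1}]$ confused, since $m_f$ is defined using $f^{-1}$ (multiplication $I^n \xrightarrow{\cdot f^{-1}} A$) while $\varphi_y$ is defined using $y$ (multiplication by $y$), so the two conventions run in opposite directions and compose to give the stated inverse. I would resolve this by pinning down one explicit element: choose a nonzero $a \in I$, note $a^{\otimes n} \mapsto a^n \in fA$, say $a^n = cf$ with $c \in A$, so after base change $a \otimes 1$ generates $I \otimes_A M$ provided $\alpha(a) \neq 0$ (which can be arranged, or worked around using that $I \otimes_A M$ is one-dimensional and $\alpha(f) \neq 0$ forces some element to survive), and then directly verify the isomorphism with $(\mathcal{L}_{\alpha(f)^{-1}}, \varphi_{\alpha(f)^{-1}})$ by matching $a \otimes 1$ with a scalar multiple of $\alpha(f)^{-1/n}$. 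A clean alternative, which I would likely prefer to write up, is to reduce to the case where $(I, m_f) = (\mathcal{L}_y, \varphi_y)$ for $y = f \in A^\times$ already a unit — which is exactly the image of $\Delta_n(A)$ — invoke the functoriality of $\Delta_n$ (the natural transformation $\mathcal{K}^n \to \Psi(-,n)$) to get $\Psi(-,n)(\alpha)(\Delta_n(A)[f]) = \Delta_n(M)[\alpha(f)]$, and then handle a general ideal representative by observing that the class $[(I, m_f)]$ agrees with $[(\mathcal{L}_{f'}, \varphi_{f'})]$ in $\Psi(A_{\mathfrak{p}}, n)$ after localizing at $\mathfrak{p} = \ker(\alpha)$, since $I$ becomes principal there; the subtlety of the inverse then surfaces when one writes $I_{\mathfrak p} = (z) A_{\mathfrak p}$ and computes $m_f$ in terms of $z^n / f$, giving the generator-dependent unit that inverts to $\alpha(f)^{-1}$ after normalizing.
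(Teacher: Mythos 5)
Your proposal is correct, and your preferred route (localize at $\mathfrak{p} = \ker(\alpha)$, make $I$ principal there, and then invoke naturality of $\Delta_n$ together with the computation $I_{\mathfrak{p}} = zA_{\mathfrak{p}}$, $z^n/f \in A_{\mathfrak{p}}^{\times}$ to read off the class $[z^n/f]$, which pushes forward to $[\alpha(z)^n/\alpha(f)] = [\alpha(f)^{-1}]$) is a valid alternative to what the paper does. The paper's proof is more economical: rather than localizing, it writes down the universal formula $\tau\colon I \otimes_A M \to \mathcal{L}_{\alpha(f)^{-1}}$, $\tau(x \otimes z) = \alpha(x)z\,\alpha(f)^{-1/n}$, and tests non-vanishing on $f \otimes 1$. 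The slick point you circle around but don't quite name in your first approach — ``provided $\alpha(a) \neq 0$, which can be arranged\ldots'' — is resolved by the observation that $f \in fA = I^n \subset I$, so $f$ itself is a candidate test element of $I$, and $\alpha(f) \neq 0$ is precisely the hypothesis. Taking $a = f$ removes the hand-waving from your first approach and collapses it into the paper's one-line argument. Your second approach is correct and arguably more conceptual (it reduces everything to the naturality of $\Delta_n$ and the Corollary \ref{IdealRepDelta}-style description of an ideal representative of a $\Delta_n$-class), but it spends a localization step that the paper avoids. You also correctly anticipate the inverse: $m_f$ divides by $f$ while $\varphi_y$ multiplies by $y$, so the answer must be $[\alpha(f)^{-1}]$, not $[\alpha(f)]$ — the paper's explicit $\tau$ confirms this sign.
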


\begin{proof}
	Consider the morphism of $M$-vector spaces $\tau \colon I \otimes_{A} M \to \mathcal{L}_{\alpha(f)^{-1}}$ given on simple tensors by $\tau(x \otimes z) = \alpha(x)z\alpha(f)^{-1/n}$. Since $I \otimes_{A}M$ and $L_{\alpha(f)^{-1}}$ are both $M$-vector spaces of dimension $1$, the map $\tau$ is an isomorphism provided it is nonzero. Indeed, this is the case, since $f \in I$, and so $\tau(f \otimes 1) = \alpha(f)\alpha(f)^{-1/n}$ is nonzero. It is straightforward to check that $\tau$ is an isomorphism between $\Psi(-, n)(\alpha)(X)$ and $\mathcal{L}_{\alpha(f)^{-1}}$. 
\end{proof}

\begin{cor} \label{IdealRepDelta}
	Let $A$ be a normal domain with field of fractions $K$, let $M$ be a field, and let $\alpha \colon A \to M$ be a morphism of rings. Let $x \in A^{\times}$, and let $(I, m_{f})$ be an ideal representative for $\Delta_{n}(A)(x)$. Then there is a nonzero element $y \in A$ such that $I = yA$ and $y^{n}/f = x$ in $K$, and $[\alpha(x)] = [\alpha(f)^{-1}]$ in $M^{\times}/(M^{\times})^{n}$. We deduce $\mathcal{K}^{n}(\alpha) = \Delta_{n}(M)^{-1} \circ \Psi(-, n)(\alpha) \circ \Delta_{n}(A)$. 
\end{cor}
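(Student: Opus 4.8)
The plan is to unwind the definition of $\Delta_{n}(A)(x)$ and read off what the existence of an ideal representative forces. By construction $\Delta_{n}(A)(x) = [(\mathcal{L}_{x}, \varphi_{x})]$, where $\mathcal{L}_{x} \subseteq R_{x} = A[X]/\langle X^{n} - x\rangle$ is the free rank-one $A$-module on $x^{1/n}$ and $\varphi_{x}$ sends $(x^{1/n})^{\otimes n}$ to $x$. If $(I, m_{f})$ is an ideal representative of this class, then by the definition of the equivalence relation on $\Psi(A,n)$ there is an $A$-module isomorphism $\rho \colon \mathcal{L}_{x} \to I$ with $m_{f} \circ \rho^{\otimes n} = \varphi_{x}$. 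First I would set $y := \rho(x^{1/n})$: since $x^{1/n}$ freely generates $\mathcal{L}_{x}$ and $\rho$ is an isomorphism onto $I$, the element $y$ generates $I$, so $I = yA$; moreover $y \in I \subseteq A$ and $y \neq 0$ because $\rho$ is injective.

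Next I would evaluate the identity $m_{f} \circ \rho^{\otimes n} = \varphi_{x}$ on the simple tensor $(x^{1/n})^{\otimes n}$. The right-hand side gives $x$. On the left, $\rho^{\otimes n}$ carries this tensor to $y^{\otimes n} \in I^{\otimes n}$, the multiplication isomorphism $I^{\otimes n} \xrightarrow{\sim} I^{n}$ sends it to $y^{n} \in I^{n} = fA$, and multiplication by $f^{-1}$ sends it to $y^{n}/f \in A$. Hence $y^{n}/f = x$ in $K$, which is the first half of the statement. Applying $\alpha$ yields $\alpha(y)^{n} = \alpha(x)\,\alpha(f)$ in $M$. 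Choosing the ideal representative so that $\alpha(f) \neq 0$ — which is possible by Corollary \ref{NonzeroIdealTors} applied with $\alpha_{1} = \cdots = \alpha_{n} = \alpha$ — and using that $\alpha(x) \in M^{\times}$ since $x \in A^{\times}$, one gets $\alpha(y) \neq 0$ and therefore $[\alpha(x)] = [\alpha(y)^{n}\alpha(f)^{-1}] = [\alpha(f)^{-1}]$ in $M^{\times}/(M^{\times})^{n}$, because $\alpha(y)^{n}$ is an $n$th power.

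Finally, for the displayed identity of functors I would invoke Proposition \ref{FieldMorphismTorsImage}: applied to $X = \Delta_{n}(A)(x)$ and the representative $(I, m_{f})$ with $\alpha(f) \neq 0$, it gives $\big(\Delta_{n}(M)^{-1} \circ \Psi(-,n)(\alpha)\big)\big(\Delta_{n}(A)(x)\big) = [\alpha(f)^{-1}]$, which we have just identified with $[\alpha(x)] = \mathcal{K}^{n}(\alpha)([x])$. Since every element of $A^{\times}/(A^{\times})^{n}$ has the form $[x]$ for some $x \in A^{\times}$, the two group homomorphisms $\mathcal{K}^{n}(\alpha)$ and $\Delta_{n}(M)^{-1} \circ \Psi(-,n)(\alpha) \circ \Delta_{n}(A)$ coincide. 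I do not expect a genuine obstacle here; the only points needing a little care are the bookkeeping with the tensor-power trivializations and the fact that one must work with a representative on which $\alpha$ does not vanish on $f$, and the latter is exactly what Corollary \ref{NonzeroIdealTors} supplies.
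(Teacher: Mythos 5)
Your argument is correct and matches the paper's proof: you unwind the equivalence $[(I, m_{f})] = [(\mathcal{L}_{x}, \varphi_{x})]$ to produce the generator $y = \rho(x^{1/n})$ of $I$, evaluate the compatibility $m_{f} \circ \rho^{\otimes n} = \varphi_{x}$ on $(x^{1/n})^{\otimes n}$ to obtain $y^{n} = xf$, and then apply $\alpha$. The one place you are more careful than the paper is the justification that $\alpha(f) \neq 0$: the paper's proof asserts $\alpha(f) \in M^{\times}$ without comment, whereas you rightly observe that this requires selecting the ideal representative via Corollary~\ref{NonzeroIdealTors}; you also spell out the deduction of $\mathcal{K}^{n}(\alpha) = \Delta_{n}(M)^{-1} \circ \Psi(-, n)(\alpha) \circ \Delta_{n}(A)$ from Proposition~\ref{FieldMorphismTorsImage}, which the paper states but leaves implicit.
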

\begin{proof}
	Since $[(I, m_{f})]$ and $[(\mathcal{L}_{x}, \varphi_{x})]$ are equal as classes in $\Psi(A, n)$, there is an isomorphism of $A$-modules $\omega \colon \mathcal{L}_{x} \to I$ such that $m_{f} \circ \omega^{\otimes n} = \varphi_{x}$. As $\mathcal{L}_{x}$ is free, $I$ is a (nonzero) principal ideal, generated by $y := \omega(1 \cdot x^{1/n}) \in A$. We thus have
	\[x = \varphi_{x}(x^{1/n} \otimes \cdots \otimes x^{1/n}) = m_{f}(\omega^{n}(x^{1/n} \otimes \cdots \otimes x^{1/n})) = m_{f}(y \otimes \cdots \otimes y) = y^{n}/f \]
	as claimed. Moreover, since $xf = y^{n}$ and $\alpha(x), \alpha(f) \in M^{\times}$, this forces $\alpha(y) \in M^{\times}$, and so $[\alpha(x)] \cdot [\alpha(f)] = [\alpha(y)^{n}] = [1] \in M^{\times}/(M^{\times})^{n}$. 
\end{proof}

Suppose $A$ is a normal domain, let $K$ be its field of fractions, and let $\xi \colon A \to K$ be the canonical localization map. Fix $X \in \Psi(A, n)$, and let $(I, m_{f})$ be an ideal representative for $X$. By Proposition \ref{FieldMorphismTorsImage}, $\Delta_{n}(K)^{-1}(\Psi(-, n)(\xi)(X)) = [\xi(f)^{-1}] = [1/f]$. If  $\partial_{n}(A) \colon K^{\times}/(K^{\times})^{n} \to \Div(A)/n\Div(A)$ denotes the map induced by $\partial(A)$, then
\[\partial_{n}(A)([1/f]) = -[\partial(A)(f)] = -[\dv(fA)] = -[n\dv(I)] = 0 \in \Div(A)/n\Div(A) \]
Hence, the map $\Delta_{n}(K)^{-1} \circ \Psi(-, n)(\xi)$ takes image in $\ker(\partial_{n}(A)) \subset K^{\times}/(K^{\times})^{n}$. If we further assume $A$ is regular, then Theorem \ref{TorsKerDivMap} shows $\Delta_{n}(K)^{-1} \circ \Psi(-, n)(\xi)$ (viewed by abuse of notation as a map $\Psi(A, n) \to \ker(\partial_{n}(A))$) is an isomorphism. 

\begin{thm} \label{TorsKerDivMap}
	Let $A$ be a regular domain, let $K$ be its field of fractions, and let $\xi \colon A \to K$ be the canonical localization map. Then the map $\Delta_{n}(K)^{-1} \circ \Psi(-, n)(\xi) \colon \Psi(A, n) \to \ker(\partial_{n}(A))$ is an isomorphism. 
\end{thm}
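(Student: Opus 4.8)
The plan is to realize the map $\Delta_n(K)^{-1}\circ\Psi(-,n)(\xi)$ as a composite of isomorphisms, most of which are already at hand. Since $A$ is regular it is in particular a Krull domain, so $\Omega_n(A)\colon\Cart(A,n)\xrightarrow{\sim}\Psi(A,n)$ is an isomorphism, and by the regularity of $A$ the map $\dv_n(A)\colon\Cart(A,n)\xrightarrow{\sim}\Div(A,n)$ is an isomorphism as recalled above. The one genuinely new ingredient is an elementary isomorphism $\kappa(A)\colon\Div(A,n)\xrightarrow{\sim}\ker(\partial_n(A))$. Granting it, I will argue that $\Delta_n(K)^{-1}\circ\Psi(-,n)(\xi)=\kappa(A)\circ\dv_n(A)\circ\Omega_n(A)^{-1}$, whence the left-hand side is an isomorphism.

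For the new ingredient, I would check that the rule $(D,g)\mapsto[g^{-1}]$ descends to a well-defined group homomorphism $\kappa(A)\colon\Div(A,n)\to K^\times/(K^\times)^n$ with image in $\ker(\partial_n(A))$: replacing $(D,g)$ by $(D+\partial(A)(x),gx^n)$ changes $[g^{-1}]$ only by the $n$-th power $[x^{-n}]$, and $\partial_n(A)([g^{-1}])=-[\partial(A)(g)]=-[nD]=0$. It is injective because $[g^{-1}]=[1]$ forces $g=x^{-n}$ for some $x\in K^\times$, hence $nD=\partial(A)(g)=-n\,\partial(A)(x)$, hence $D=\partial(A)(x^{-1})$ since $\Div(A)$ is free abelian, so $(D,g)$ represents the trivial class of $\Div(A,n)$; and it is surjective because any $[u]\in\ker(\partial_n(A))$ satisfies $\partial(A)(u)=nD'$ for some $D'$, so $(-D',u^{-1})\in D(A,n)$ and $\kappa(A)$ sends its class to $[u]$. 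This step uses nothing about $A$ beyond the definition of a Krull domain.

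Finally I would trace an arbitrary class through the composite. Every class of $\Psi(A,n)$ admits an ideal representative $(I,m_f)$ with $I\subseteq A$ an invertible fractional ideal, $I^n=fA$, and $f\in A\setminus\{0\}$, and since all maps in sight are group homomorphisms it suffices to treat such classes. Here $\Omega_n(A)^{-1}[(I,m_f)]=[(I,f)]$; then $\dv_n(A)$ sends this to $[(\dv(I),f)]$, the identity $\partial(A)(f)=\dv(fA)=\dv(I^n)=n\,\dv(I)$ certifying that $(\dv(I),f)\in D(A,n)$; and $\kappa(A)[(\dv(I),f)]=[f^{-1}]$. On the other hand, Proposition \ref{FieldMorphismTorsImage} applied to the localization map $\xi\colon A\to K$ (for which $\xi(f)=f\neq 0$) gives $\left(\Delta_n(K)^{-1}\circ\Psi(-,n)(\xi)\right)([(I,m_f)])=[\xi(f)^{-1}]=[f^{-1}]$, so the two maps agree. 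There is no deep step in all of this; the only points requiring care are the compatibility trace just described and the observation that the regularity hypothesis enters \emph{only} through the assertion that $\dv_n(A)$ (equivalently $\dv(A)\colon\Cart(A)\to\Div(A)$) is an isomorphism, which rests on regular local rings being UFDs, so that $A$ is locally factorial and every Weil divisor is Cartier. One could equally well avoid $\kappa(A)$ and exhibit a two-sided inverse $\sigma\colon\ker(\partial_n(A))\to\Psi(A,n)$ directly, sending $[u]$ to $\Omega(A)(I,u^{-1})$ where $I$ is the unique invertible fractional ideal with $\dv(I)$ equal to the unique $D$ satisfying $\partial(A)(u^{-1})=nD$, but this comes down to the same computation.
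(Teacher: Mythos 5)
Your proof is correct, and it takes a genuinely different route from the paper's. The paper exhibits an exact sequence
\[1 \to A^{\times}/(A^{\times})^{n} \xrightarrow{\mathcal{K}^{n}(\xi)} \ker(\partial_{n}(A)) \xrightarrow{\zeta(A)} \Cl(A)[n] \to 0,\]
compares it to the exact sequence $1 \to A^{\times}/(A^{\times})^{n} \to \Psi(A,n) \to \Pic(A)[n] \to 0$ from Proposition~\ref{MuTorsExactSeq} via a commutative ladder (with $\nu(A)$, the inverted divisor-class isomorphism, as the right vertical arrow), and then invokes the Five Lemma. You instead factor $\Delta_{n}(K)^{-1}\circ\Psi(-,n)(\xi)$ directly as $\kappa(A)\circ\dv_{n}(A)\circ\Omega_{n}(A)^{-1}$, where $\kappa(A)\colon\Div(A,n)\to\ker(\partial_{n}(A))$ is the elementary isomorphism $[(D,g)]\mapsto[g^{-1}]$, and check the factorization on ideal representatives using Proposition~\ref{FieldMorphismTorsImage}. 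Your approach is more self-contained and avoids the diagram chase: it makes the map $\kappa(A)$ the only genuinely new ingredient, and the regularity hypothesis enters transparently through $\dv_{n}(A)$ being an isomorphism. The paper's approach is somewhat heavier, but the auxiliary map $\zeta(A)\colon\ker(\partial_{n}(A))\to\Cl(A)[n]$ it constructs gives a conceptually useful parallel between the torsor-theoretic exact sequence and its divisor-theoretic shadow; either framing works equally well for the applications downstream. All of your verifications (well-definedness, injectivity via freeness of $\Div(A)$, surjectivity, and the trace through an ideal representative $(I,m_f)$) check out.
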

\begin{proof}
	First, consider the morphism $\zeta(A) \colon \ker(\partial_{n}(A)) \to \Cl(A)[n]$ defined as follows: if $[x] \in K^{\times}/(K^{\times})^{n}$ belongs to the kernel of $\partial_{n}(A)$, then $\partial(A)(x) \in n\Div(A)$. Define $\zeta(A)$ by sending $[x]$ to $[D]$, where $D$ satisfies $nD = \partial(A)(x)$; note that $D$ must be unique, since $\Div(A)$ is free. This map is well-defined, because if $x' = xy^{n}$ for $y \in K^{\times}$, then $\partial(A)(xy^{n}) = n(D+\partial(A)(y))$, and $[D] = [D+\partial(A)(y)]$ in $\Cl(A)$. We claim that the sequence
	\[1 \to A^{\times}/(A^{\times})^{n} \xrightarrow{~\mathcal{K}^{n}(\xi)~} \ker(\partial_{n}(A)) \xrightarrow{~\zeta(A)~} \Cl(A)[n] \to 0 \]
	is exact. Indeed, $\mathcal{K}^{n}(\xi)$ is an injection because $A$ is integrally closed in $K$. Moreover, if $[D] \in \Cl(A)[n]$, then $nD = \partial(A)(x)$ for some $x \in K^{\times}$, and so $\zeta(A)([x]) = [D]$. Hence, $\zeta(A)$ is surjective. \\
	It remains to check exactness at $\ker(\partial_{n}(A))$. Clearly, $\Imm(\mathcal{K}^{n}(\xi)) \subset \ker(\zeta(A))$, so suppose that $[x] \in \ker(\zeta(A))$. Then $\partial(A)(x) = n\partial(A)(y) = \partial(A)(y^{n})$ for some $y \in K^{\times}$, whence $x = y^{n} \cdot x'$ for some $x' \in A^{\times}$, and so $[x] = [x']$ in $K^{\times}/(K^{\times})^{n}$. \\
	Now, since $A$ is regular, $\dv(A) \colon \Cart(A) \to \Div(A)$ is an isomorphism, and so induces an isomorphism $\Pic(A)[n] \to \Cl(A)[n]$. Let $\nu(A) \colon \Pic(A)[n] \to \Cl(A)[n]$ be the composition of this isomorphism with the inversion automorphism $\Cl(A)[n] \to \Cl(A)[n]$. I claim that $\zeta(A) \circ \Delta_{n}(K)^{-1} \circ \Psi(-, n)(\xi) = \nu(A) \circ \Theta_{n}(A)$. Indeed, let $X$ be a class in $\Psi(A, n)$, and let $(I, m_{f})$ be an ideal representative for $X$. Then 
	\[(\nu(A) \circ \Theta_{n}(A))(X) = \nu(A)([I]) =  -[\dv(I)].\]
	On the other hand, 
	\[(\zeta(A) \circ \Delta_{n}(K)^{-1} \circ \Psi(-, n)(\xi))(X) = \zeta(A)([1/f])\]
	by Proposition \ref{FieldMorphismTorsImage}. But $I^{n} = fA$, so $\partial(A)(1/f) = -\dv(I^{n}) = -n\dv(I)$, and thus $\zeta(A)(1/f) = -[\dv(I)]$. By Proposition \ref{IdealRepDelta}, $\mathcal{K}^{n}(\xi) = \Delta_{n}(K)^{-1} \circ \Psi(-, n)(\xi) \circ \Delta_{n}(A)$, so we have a commutative diagram of abelian groups 
	\begin{center}
		\begin{tikzcd}[column sep = .5 in, row sep = .7 in]
		1 \arrow{r} & A^{\times}/(A^{\times})^{n} \arrow{d}{\Id_{A^{\times}/(A^{\times})^{n}}} \arrow{r}{\Delta_{n}(A)} & \arrow{d}{\Delta_{n}(K)^{-1} \circ \Psi(-, n)(\xi)} \Psi(A, n) \arrow{r}{\Theta_{n}(A)} & \arrow{d}{\nu(A)} \Pic(A)[n] \arrow{r} & 0 \\
		1 \arrow{r} & A^{\times}/(A^{\times})^{n} \arrow{r}{\mathcal{K}^{n}(\xi)} & \ker(D(A)_{n}) \arrow{r}{\zeta(A)} & \Cl(A)[n] \arrow{r} & 0 	
		\end{tikzcd}
	\end{center}
	whose rows are exact. Since $\Id_{A^{\times}/(A^{\times})^{n}}$ and $\nu(A)$ are isomorphisms, $\Delta_{n}(K)^{-1} \circ \Psi(-, n)(\xi)$ must be an isomorphism as well.
\end{proof}

\subsection{Pulling Back Torsors Along Products of Points}

\begin{defn}
	Let $G$ be an algebraic group over a field $F$, and let $A = F[G]$. Let $\varepsilon_{F} \in G(F)$ denote the identity element. We say a class $X \in \Psi(A, n)$ is {\bf normalized} if $X \in \ker(\Psi(-, n)(\varepsilon_{F}))$. We denote the subgroup of $\Psi(A, n)$ consisting of normalized elements by $\Psi_{\nm}(A, n)$. Likewise, we set $\Cart_{\nm}(A, n) = \Omega_{n}(A)^{-1}(\Psi_{\nm}(A, n)))$, and if $G$ is smooth, then we set $\Div_{\nm}(A, n) = \dv_{n}(A)(\Cart_{\nm}(A, n))$. 
\end{defn}

We note the following properties of the subgroup $\Psi_{\nm}(A, n)$:

\begin{enumerate}[(1)]
	\item By Theorem \ref{PsiTorsors}, one sees that $\Psi_{\nm}(A, n) = \lambda_{n}(A)^{-1}(\Tors_{\nm}(G, \bmu_{n, F}))$. 
	\item The assignment $A \mapsto \Psi_{\nm}(A, n)$ defines a functor from the category of Hopf $F$-algebras to $\AbGrps$. Moreover, if $M/F$ is a field extension, and $\alpha \colon A \to A_{M}$ denotes the canonical base change morphism, then the restriction of $\Psi(-, n)(\alpha)$ to $\Psi_{\nm}(A, n)$ takes image in $\Psi_{\nm}(A_{M}, n)$.   
	\item If $G$ is a smooth, connected group, then $\Delta_{n}(A)^{-1}(\Psi_{\nm}(A, n)) = G^{\ast}/(G^{\ast})^{n} \subset A^{\times}/(A^{\times})^{n}$. This follows from Rosenlicht's theorem (\cite[Theorem 3]{Rosenlicht}). 
\end{enumerate}

Normalized elements play a key role in the following situation. Let $M/F$ be a field extension, and fix a class $X \in \Psi(A, n)$. Consider the map $G(M) \to \Psi(M, n)$ which sends $\alpha \in G(M)$ to $\Psi(-, n)(\alpha)(X)$. Under what conditions is this map a group homomorphism? As the following theorem shows, this is the case precisely when $X$ is normalized, provided that $G$ is smooth, connected, and reductive.  

\begin{thm}\label{PointsMultTorsors}
	Let $G$ be a smooth, connected, reductive, algebraic group over a field $F$. Put $A = F[G]$, and let $M$ be a field extension of $F$. For any $\alpha, \beta \in G(M)$, and any class $X \in \Psi(A, n)$, we have 
	\[\Psi(-, n)(\alpha)(X) \cdot \Psi(-, n)(\beta)(X) = \Psi(-, n)(\alpha\beta)(X) \cdot \Psi(-, n)(\varepsilon_{M})(X) \]
\end{thm}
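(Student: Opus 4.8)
The plan is to reduce the identity to a statement about the multiplication map $m_G \colon G \times G \to G$ and its interaction with the comultiplication on $A = F[G]$, exploiting the functoriality of $\Psi(-,n)$ together with the structure theory of units and Picard groups of reductive groups. Write $A = F[G]$ and $B = F[G \times G] = A \otimes_F A$, and let $\mu \colon A \to B$ be the Hopf-algebra comultiplication dual to $m_G$. A point $\alpha \in G(M)$ corresponds to an $F$-algebra map $\alpha^\sharp \colon A \to M$, and the product $\alpha\beta \in G(M)$ corresponds to the composite $A \xrightarrow{\mu} B \xrightarrow{\alpha^\sharp \otimes \beta^\sharp} M$. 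So the right-hand side of the claimed identity is $\Psi(-,n)(\alpha^\sharp \otimes \beta^\sharp)\bigl(\Psi(-,n)(\mu)(X)\bigr) \cdot \Psi(-,n)(\varepsilon_M^\sharp)(X)$, while the left-hand side is $\Psi(-,n)(\alpha^\sharp \otimes \beta^\sharp)\bigl(p_1^*X \cdot p_2^*X\bigr)$, where $p_1, p_2 \colon G\times G \to G$ are the projections and $p_i^* = \Psi(-,n)(p_i^\sharp)$. Thus it suffices to prove the \emph{universal} identity
\[
\Psi(-,n)(\mu)(X) = p_1^* X \cdot p_2^* X \cdot \pi^*\bigl(\Psi(-,n)(\varepsilon_F)(X)\bigr)^{-1} \quad \text{in } \Psi(B, n),
\]
where $\pi \colon G\times G \to \Spec F$ is the structure map; pulling this back along $\alpha^\sharp \otimes \beta^\sharp$ and rearranging gives the theorem. (The inverse/sign bookkeeping is routine once the universal identity is in place.)

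To prove the universal identity I would first reduce to the case where $X$ is normalized: replacing $X$ by $X \cdot \pi^*(\Psi(-,n)(\varepsilon_F)(X))^{-1}$ changes both sides in a controlled way and kills the correction term, so without loss of generality $\Psi(-,n)(\varepsilon_F)(X) = 0$ and we must show $\Psi(-,n)(\mu)(X) = p_1^*X \cdot p_2^*X$ for normalized $X$. Now I invoke Proposition~\ref{MuTorsExactSeq}: we have the exact sequence $1 \to B^\times/(B^\times)^n \xrightarrow{\Delta_n(B)} \Psi(B,n) \xrightarrow{\Theta_n(B)} \Pic(B)[n] \to 0$. The key structural inputs for a smooth connected reductive $G$ are: (i) $\Pic(G)$ is finite and, more to the point, $\Pic(G \times G) \cong \Pic(G) \oplus \Pic(G)$ via $(p_1^*, p_2^*)$ — this is a standard fact for smooth connected groups (the "see-saw"/Rosenlicht-type splitting), and under this identification $\Theta_n(\mu(X)) = \Theta_n(p_1^*X) + \Theta_n(p_2^*X)$ because $m_G, p_1, p_2$ all agree after restriction along either factor inclusion $G \hookrightarrow G\times G$; and (ii) by observation (3) following the definition of $\Psi_\nm$ (Rosenlicht's theorem), the normalized part of $\Psi$ maps, via $\Delta_n$, to $G^*/(G^*)^n$, and characters are \emph{primitive}: $\mu(\chi) = p_1^*\chi \cdot p_2^*\chi$ for $\chi \in G^*$. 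Combining (i) and (ii): the difference $D := \Psi(-,n)(\mu)(X) \cdot (p_1^*X)^{-1} \cdot (p_2^*X)^{-1}$ lies in $\ker \Theta_n(B) = \Delta_n(B)(B^\times/(B^\times)^n)$ by (i), and since $D$ is normalized (restrict along each $\varepsilon$) it actually comes from $(G\times G)^*/((G\times G)^*)^n = (G^* \oplus G^*)/n$; writing $D = \Delta_n(B)(\chi_1 \boxtimes \chi_2)$ and restricting along the two inclusions $G \hookrightarrow G\times G$ shows $\chi_1 = \chi_2 = 0$, using primitivity of characters from (ii). Hence $D$ is trivial.

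The main obstacle is establishing the two structural facts for reductive $G$ in the precise form needed: namely that $\Pic(G\times G) = p_1^*\Pic(G) \oplus p_2^*\Pic(G)$ and that the unit group decomposes as $(G\times G)^\times = F^\times \oplus G^* \oplus G^*$ with characters behaving primitively. Both follow from Rosenlicht's theorem on units of algebraic groups (already cited in the excerpt as \cite{Rosenlicht}) together with the fact that $\Pic$ of a smooth connected linear algebraic group is finite and that $\Pic(X \times Y) = \Pic(X) \oplus \Pic(Y)$ when $X, Y$ are geometrically integral with a rational point and $\mathcal{O}(X)^\times/F^\times$, $\mathcal{O}(Y)^\times/F^\times$ finitely generated — but one must be careful about whether $G$ is split, and likely one should first base-change to $F_\sep$, prove the identity there (where the torsor and its pullbacks are all defined over $F_\sep$ and Galois descent applies since every map in sight is defined over $F$), and then descend. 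I would expect to spend most of the write-up nailing down these Picard- and unit-group computations and the normalization reduction; the diagram-chase with Proposition~\ref{MuTorsExactSeq} at the end is short.
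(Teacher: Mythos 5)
Your proposal is correct, and it takes a genuinely different route from the paper while resting on the same two structural inputs: the splitting $\Pic(G\times G) \cong \Pic(G)\oplus\Pic(G)$ (the paper obtains this from rationality of reductive groups over $F_{\sep}$ together with Sansuc's Lemma~6.6 --- the same fact you call a ``see-saw'' splitting), and Rosenlicht's description of the units of $F[G\times G]$. The paper descends immediately to $M^{\times}/(M^{\times})^{n}$ via an ideal representative $(I, m_{f})$, derives the explicit unit equation $b\,c(f) = h^{n}(f\otimes f)$ in $F[G\times G]$, decomposes $b = z(g\otimes g')$ by Rosenlicht, and then eliminates the stray factor $z\alpha(g)\beta(g')$ by substituting $\varepsilon_{M}$ for $\alpha$ and $\beta$ in turn. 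You instead work upstairs in $\Psi(F[G\times G],n)$: you reduce to the universal identity $\Psi(-,n)(\mu)(X) = p_{1}^{\ast}X \cdot p_{2}^{\ast}X$ for normalized $X$, show via the Picard splitting that the discrepancy $D$ lies in the unit part $\Delta_{n}(B)(B^{\times}/(B^{\times})^{n})$, use normalization and Rosenlicht (observation (3) following the definition of $\Psi_{\nm}$, applied to $G\times G$) to identify $D$ with a class in $(G^{\ast}\oplus G^{\ast})/n$, and then kill that class by restricting along the two factor inclusions $G \to G\times G$. This is a cleaner, more conceptual rephrasing of the paper's substitution trick, and the resulting universal identity in $\Psi(F[G\times G],n)$ is somewhat more reusable than the pointwise statement. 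Two minor remarks: the last step really uses only the counit axiom $\varepsilon_{F}(\chi) = 1$ together with functoriality of $\Psi(-,n)$, not primitivity of characters per se; and your concern about first base-changing to $F_{\sep}$ and descending is unnecessary, since Sansuc's lemma applies directly over $F$ as soon as one factor becomes rational after separable base change, which is exactly how the paper invokes it.
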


\begin{proof}
	Let $(I, m_{f})$ be an ideal representative for $X$ such that $\alpha(f), \beta(f), (\alpha\beta)(f), \varepsilon_{M}(f) \in M^{\times}$; this is possible by Corollary \ref{NonzeroIdealTors}. By Proposition \ref{FieldMorphismTorsImage}, it suffices to show that
	\[[(\alpha\beta)(f)\varepsilon_{M}(f)] = [\alpha(f)\beta(f)]\]
	as classes in $M^{\times}/(M^{\times})^{n}$. Let $B = F[G \times_{F} G] = A \otimes_{F} A$, and let $E$ be the field of fractions of $B$. Let $c, p_{1}, p_{2} \colon A \to B$ be the $F$-algebra morphisms corresponding respectively to the morphisms $G \times G \to G$ given by multiplication and projection onto each component. We note that $c, p_{1}, p_{2}$ are each flat, hence injective. \\
	By \cite[Theorems 16.56 and 21.84]{MilneAG}, any connected, reductive algebraic group over a separably closed field is rational, and so the natural map $\Pic(B) \to \Pic(A) \oplus \Pic(A)$ is an isomorphism by \cite[Lemma 6.6]{Sansuc}. Moreover, up to this identification, $\Pic(c) \colon \Pic(A) \to \Pic(B)$ is the diagonal embedding, and  $\Pic(p_{i}) \colon \Pic(A) \to \Pic(B)$ is the embedding onto the $i^{\thh}$ component. Let $J_{c} = c(I)B, J_{i} = p_{i}(I)B$; since $c, p_{1}, p_{2}$ are flat, $J_{c}, J_{1}, J_{2} \in \Cart(B)$, and we have $[J_{c}] = [J_{1}] + [J_{2}]$ as classes in $\Pic(B)$. In light of the classical exact sequence
	\begin{equation} \label{ExactSeqCart} 
	1 \to B^{\times} \to E^{\times} \to \Cart(B) \to \Pic(B) \to 0
	\end{equation}
	there exists $h \in E^{\times}$ such that $J_{c} = hB \cdot J_{1} \cdot J_{2}$. Raising each side of this equation to the $n^{\thh}$ power and using the relation $I^{n} = fA$ gives the equation $c(f)B = h^{n}B \cdot (f \otimes f)B$. Appealing again to \ref{ExactSeqCart}, there exists $b \in B^{\times}$ such that $b c(f) = h^{n} (f \otimes f)$. Let $x, y \in B$ such that $h = x/y$, so that our equation reads $b c(f) y^{n} = x^{n} (f \otimes f)$. \\
	Let $\omega \colon B \to M$ be the composition of $\alpha \otimes_{F} \beta \colon B \to M \otimes_{F} M$ and the multiplication map $M \otimes_{F} M \xrightarrow{\sim} M$. Then $\omega(c(f)) = (\alpha\beta)(f)$, and $\omega(f \otimes f) = \alpha(f)\beta(f)$, so applying $\omega$ to the equation above gives
	\[(\alpha\beta)(f) \omega(b)\omega(y)^{n} = \alpha(f)\beta(f) \omega(x)^{n}\]
	Since $M$ is a field, $\mfp := \ker(\omega)$ is a prime ideal of $B$. We know that $\alpha(f), \beta(f) \in M^{\times}$, so $f \otimes f$ belongs to $B \setminus \mfp$. Hence, $h^{n} = bc(f)/(f\otimes f) \in B_{\mfp}$. Since $B$ is regular, it follows that $B_{\mfp}$ is integrally closed in $E$, so $h^{n} \in B_{\mfp}$ implies $h \in B_{\mfp}$; in particular, we have $\omega(y) \neq 0$. This also forces $\omega(x) \neq 0$, since $(\alpha\beta)(f), \omega(b) \in M^{\times}$, so we have
	\[[(\alpha\beta)(f)\omega(b)] = [\alpha(f)\beta(f)]\]
	as classes in $M^{\times}/(M^{\times})^{n}$. It remains to show that $\omega(b)$ and $\varepsilon_{M}(f)$ belong to the same class in $M^{\times}/(M^{\times})^{n}$. By Rosenlicht's theorem (\cite[Theorem 3]{Rosenlicht}), the map $F^{\times} \oplus G^{\ast} \oplus G^{\ast} \to B^{\times}$ sending $(z, \chi, \rho)$ to $z(\chi^{\sharp}(t) \otimes \rho^{\sharp}(t) )$ is an isomorphism, so $b$ can be written as $z(g \otimes g')$ for $g, g' \in A^{\times}$ group-like elements, $z \in F^{\times}$. Then $\omega(b) = z\alpha(g)\beta(g')$, and our equation in $M^{\times}/(M^{\times})^{n}$ therefore reads
	\[[(\alpha\beta)(f) \cdot z \cdot \alpha(g)\beta(g')] = [\alpha(f)\beta(f)]\]
	Our derivation of this equation did not depend on our choice of $\alpha, \beta \in G(M)$, only on the fact that $\alpha(f), \beta(f), (\alpha\beta)(f) \in M^{\times}$. In particular, since we arranged that $\varepsilon_{M}(f) \neq 0$, we can substitute $\varepsilon_{M}$ for $\alpha$ or $\beta$ in our equation. Plugging in $\alpha = \varepsilon_{M}$ and using $\varepsilon_{M}(g) = 1$ gives $[\varepsilon_{M}(f)] = [z\beta(g')]$, and likewise, plugging in $\beta = \varepsilon_{M}$ yields $[\varepsilon_{M}(f)] = [z\alpha(g)]$. Substituting both $\alpha = \varepsilon_{M}, \beta = \varepsilon_{M}$ simultaneously gives us $[z] = [\varepsilon_{M}(f)]$, whence $[\alpha(g)] = [\beta(g')] = 1$, and so $[z\alpha(g)\beta(g')] = [\varepsilon_{M}(f)]$, completing the proof. 
\end{proof}

\begin{cor} \label{NormalizedGroupOp}
	Let $G, A, M$ be as in the statement of Theorem \ref{NormalizedGroupOp}. If $X \in \Psi_{\nm}(A, n)$, then the map $G(M) \to \Psi(M, n)$ sending $\alpha \in G(M)$ to $\Psi(-, n)(\alpha)(X)$ is a group homomorphism. \qed
\end{cor}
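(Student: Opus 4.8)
The plan is to obtain this as an essentially immediate consequence of Theorem \ref{PointsMultTorsors}. That theorem already supplies, for all $\alpha, \beta \in G(M)$, the identity
\[
\Psi(-, n)(\alpha)(X) \cdot \Psi(-, n)(\beta)(X) = \Psi(-, n)(\alpha\beta)(X) \cdot \Psi(-, n)(\varepsilon_{M})(X),
\]
so the entire content of the corollary reduces to showing that, under the normalization hypothesis on $X$, the correction factor $\Psi(-, n)(\varepsilon_{M})(X)$ is trivial in $\Psi(M, n)$.

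To see this, I would first note that the identity point $\varepsilon_{M} \colon A \to M$ factors as $A \xrightarrow{~\varepsilon_{F}~} F \hookrightarrow M$, since the identity section of a group scheme is compatible with base change; here $F \hookrightarrow M$ is the structure map exhibiting $M$ as an $F$-algebra. Invoking the functoriality of $\Psi(-, n)$ (property (2) recorded after the definition of $\Psi(A,n)$), this factorization gives $\Psi(-, n)(\varepsilon_{M})(X) = \Psi(-, n)(F \hookrightarrow M)\bigl(\Psi(-, n)(\varepsilon_{F})(X)\bigr)$. By the very definition of $\Psi_{\nm}(A, n)$, the class $\Psi(-, n)(\varepsilon_{F})(X)$ is the identity element of $\Psi(F, n)$, and since $\Psi(-, n)(F \hookrightarrow M)$ is a group homomorphism it carries this identity to the identity of $\Psi(M, n)$. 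Hence $\Psi(-, n)(\varepsilon_{M})(X)$ is trivial.

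Substituting this back into the displayed identity leaves $\Psi(-, n)(\alpha)(X) \cdot \Psi(-, n)(\beta)(X) = \Psi(-, n)(\alpha\beta)(X)$ for all $\alpha, \beta \in G(M)$; taking $\alpha = \beta = \varepsilon_{M}$ also re-confirms that the identity of $G(M)$ maps to the identity of $\Psi(M, n)$. This is exactly the assertion that $\alpha \mapsto \Psi(-, n)(\alpha)(X)$ is a group homomorphism. I do not expect any genuine obstacle here: all of the difficulty is already packaged into Theorem \ref{PointsMultTorsors}, and the only step requiring a moment's attention is verifying the factorization of $\varepsilon_{M}$ through $\varepsilon_{F}$ together with functoriality of $\Psi(-, n)$, both of which are routine.
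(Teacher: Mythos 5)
Your proof is correct and is precisely the argument the paper intends; the corollary is stated with only a \(\qed\), reflecting that the reduction to Theorem~\ref{PointsMultTorsors} plus the factorization \(\varepsilon_{M} = (F \hookrightarrow M) \circ \varepsilon_{F}\) and functoriality of \(\Psi(-, n)\) is routine, exactly as you have laid out. (Incidentally, the "Theorem~\ref{NormalizedGroupOp}" self-reference in the corollary's hypothesis is a typo in the paper for Theorem~\ref{PointsMultTorsors}, which you correctly interpreted.)
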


\subsection{The Galois Action on Torsors}

Suppose that our group $G$ is smooth and connected, and $\Pic(G_{\sep})[n] = 0$. Then putting $A = F[G]$, $\Delta_{n}(A_{\sep})$ is an isomorphism by Corollary \ref{KMuNTorsors}, and the subgroup of $\Psi_{\nm}(A_{\sep}, n)$ of $\Psi(A_{\sep}, n)$ is the image of $G_{\sep}^{\ast}/(G_{\sep}^{\ast})^{n}$. Via the embedding $\Gamma \to \Aut_{F-\alg}(A_{\sep})$, $\Gamma$ acts functorially on $A_{\sep}^{\times}/(A_{\sep}^{\times})^{n}$ and $\Psi(A_{\sep}, n)$, and the map $\Delta_{n}(A_{\sep})$ is $\Gamma$-equivariant. Since the action of $\Gamma$ on $A_{\sep}^{\times}/(A_{\sep}^{\times})^{n}$ preserves the summand $G_{\sep}^{\ast}/(G_{\sep}^{\ast})^{n} \subset A_{\sep}^{\times}/(A_{\sep}^{\times})^{n}$, this shows that the action of $\Gamma$ on $\Psi(A_{\sep}, n)$ restricts to an action on $\Psi_{\nm}(A_{\sep}, n)$. 

Throughout this section, let $\alpha \colon A \to A_{\sep}$ denote the canonical base change morphism. The associated map $\Psi(-, n)(\alpha) \colon \Psi_{\nm}(A, n) \to \Psi_{\nm}(A_{\sep}, n)$ has image in $H^{0}(F, \Psi_{\nm}(A_{\sep}, n))$. If we assume that $G$ is geometrically integral, then $\Psi(-, n)(\alpha)$ is an embedding with image $H^{0}(F, \Psi_{\nm}(A_{\sep}, n))$; this is the content of Theorem \ref{GalFixedTorsors}. First, we require a lemma. 

%

\begin{lem} \label{ClassGroupCohomology} 
	Let $G$ be a smooth, geometrically integral group variety over $F$, and let $A = F[G]$. If $\Cl(A_{\sep}) = 0$, then there is an isomorphism $Z(A) \colon H^{1}(F, G_{\sep}^{\ast}) \to \Cl(A)$.   	
\end{lem}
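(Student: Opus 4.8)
The plan is to build the isomorphism $Z(A)$ directly from the localization sequence relating units and divisors on $G$, combined with Rosenlicht's description of the units $A_{\sep}^{\times}$. Let $K_{\sep}$ denote the field of fractions of $A_{\sep}$. Since $G_{\sep}$ is a smooth integral variety, we have the standard exact sequence of $\Gamma$-modules
\[
1 \to A_{\sep}^{\times} \to K_{\sep}^{\times} \xrightarrow{\partial} \Div(A_{\sep}) \to \Cl(A_{\sep}) \to 0,
\]
and the hypothesis $\Cl(A_{\sep}) = 0$ makes $\partial$ surjective. By Rosenlicht's theorem (\cite[Theorem 3]{Rosenlicht}), since $G$ is geometrically integral, $A_{\sep}^{\times} = F_{\sep}^{\times} \oplus G_{\sep}^{\ast}$ as $\Gamma$-modules, and $H^{1}(F, F_{\sep}^{\times}) = 0$ by Hilbert 90, so the connecting maps in the long exact cohomology sequence will identify $H^{1}(F, G_{\sep}^{\ast}) \cong H^{1}(F, A_{\sep}^{\times})$.

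First I would break the four-term sequence into two short exact sequences of $\Gamma$-modules by introducing $P_{\sep} := \partial(K_{\sep}^{\times}) \subset \Div(A_{\sep})$, namely $1 \to A_{\sep}^{\times} \to K_{\sep}^{\times} \to P_{\sep} \to 1$ and (using $\Cl(A_{\sep})=0$) the identification $P_{\sep} = \Div(A_{\sep})$. Taking Galois cohomology of the first, and using $H^{1}(F, K_{\sep}^{\times}) = 0$ (Hilbert 90, as $K_{\sep}/K$ has Galois group $\Gamma$ acting with $K_{\sep}^{\Gamma} = K$ — this requires the remark that $A$ is geometrically integral so $K_{\sep} = K \otimes_{F} F_{\sep}$ with the right Galois action), the connecting homomorphism gives an isomorphism
\[
H^{0}(F, \Div(A_{\sep})) / \partial\bigl(H^{0}(F, K_{\sep}^{\times})\bigr) \xrightarrow{\ \sim\ } H^{1}(F, A_{\sep}^{\times}).
\]
The left-hand side is exactly $\Div(A) / \partial(A)(K^{\times})$: here I would invoke that $\Div(A_{\sep})^{\Gamma} = \Div(A)$ because $\Gamma$ permutes the height-one primes of $A_{\sep}$ with orbits corresponding to the height-one primes of $A$ (the extension $A \to A_{\sep}$ being integral and $A$ normal), and $K_{\sep}^{\times \Gamma} = K^{\times}$. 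Since $\Cl(A) = \Div(A)/\partial(A)(K^{\times})$ by definition, and $H^{1}(F, A_{\sep}^{\times}) \cong H^{1}(F, G_{\sep}^{\ast})$ by the Rosenlicht splitting and Hilbert 90, composing these isomorphisms produces the desired $Z(A) \colon H^{1}(F, G_{\sep}^{\ast}) \to \Cl(A)$.

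The main obstacle I anticipate is not any single deep input but the careful bookkeeping of $\Gamma$-module structures: verifying that $\Div(A_{\sep})$ is a permutation $\Gamma$-module with $\Div(A_{\sep})^{\Gamma} = \Div(A)$ (needing that every height-one prime of $A$ has open stabilizer, which follows since $A_{\sep} = \varinjlim A_{L}$ over finite separable $L/F$ and divisors are defined over some finite level), confirming that the Rosenlicht isomorphism $A_{\sep}^{\times} \cong F_{\sep}^{\times} \oplus G_{\sep}^{\ast}$ is $\Gamma$-equivariant and compatible with taking invariants, and checking that the two applications of Hilbert 90 are legitimate in this relative setting. A secondary point requiring care is that the construction of $Z(A)$ via a connecting map is canonical enough to be useful later (e.g. for the functoriality and compatibility statements the paper will need), so I would phrase the isomorphism explicitly: a class in $\Cl(A)$ is represented by a $\Gamma$-invariant divisor $D$; lift $D$ to $\partial(A)(h) = D$ for some $h \in K_{\sep}^{\times}$ (possible as $\Cl(A_{\sep}) = 0$), and send $[D]$ to the cocycle $\gamma \mapsto {}^{\gamma}h / h \in A_{\sep}^{\times}$, then project to $G_{\sep}^{\ast}$ via the Rosenlicht splitting — and then check independence of all choices.
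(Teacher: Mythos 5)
Your proposal is correct and follows essentially the same route as the paper: both start from the $\Gamma$-module exact sequence $1 \to A_{\sep}^{\times} \to K_{\sep}^{\times} \to \Div(A_{\sep}) \to 0$, apply Hilbert 90 and Rosenlicht's theorem to identify $H^{1}(F, A_{\sep}^{\times})$ with $H^{1}(F, G_{\sep}^{\ast})$, and identify $\Div(A_{\sep})^{\Gamma}$ with $\Div(A)$. The paper packages the final step as a Five Lemma argument on a commutative ladder rather than quoting the connecting-map cokernel description directly, and records the same explicit cocycle formula $\gamma \mapsto \gamma(x)/x$ in the remark following the lemma, but these are cosmetic differences.
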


\begin{proof}
	Let $K_{s} = \Frac(A_{\sep})$; since $G$ is geometrically integral, $K_{s} = KF_{\sep}$. Since $\Cl(A_{\sep}) = 0$, we have an exact sequence of $\Gamma$-modules
	\[1 \to (A_{\sep})^{\times} \to (K_{s})^{\times} \xrightarrow{\partial(A_{\sep})} \Div(A_{\sep}) \to 0 \]
	and therefore obtain the following long exact sequence in Galois cohomology: 
	\[H^{0}(F, (A_{\sep})^{\times}) \to H^{0}(F, K_{s}^{\times}) \to H^{0}(F, \Div(A_{\sep})) \xrightarrow{~\delta~} H^{1}(F, (A_{\sep})^{\times}) \to H^{1}(F, K_{s}^{\times}) \to \cdots \]
	Since $\Gamma \cong \Gal(KF_{\sep}/K) = \Gal(K_{s}/K)$, we have $H^{1}(F, K_{s}^{\times})$ by Hilbert Theorem 90. As $A$ is regular and geometrically integral, $\Div(\alpha)$ embeds $\Div(A)$ onto $H^{0}(F, \Div(A_{\sep}))$. By Rosenlicht's Theorem (\cite[Theorem 3]{Rosenlicht}), the map $F_{\sep}^{\times} \oplus G_{\sep}^{\ast} \to A_{\sep}^{\times}$ sending $(z, \chi)$ to $z\chi^{\sharp}(t)$ is an isomorphism of $\Gamma$-modules. Hence, $H^{1}(F, (A_{\sep})^{\times}) \cong H^{1}(F, G_{\sep}^{\ast}) \oplus H^{1}(F, (F_{\sep})^{\times}) = H^{1}(F, G_{\sep}^{\ast})$. We thus have a commutative diagram
	\begin{center}
		\begin{tikzcd}[column sep = .4 in, row sep = .5 in]
		A^{\times} \arrow{d}{} \arrow{r} & K^{\times} \arrow{r}{\partial(A)} \arrow{d}{} & \Div(A)  \arrow{d}{\Div(\alpha)} \arrow{r}{\dv(A)} & \Cl(A) \arrow{r} & 0  \\
		H^{0}(F, (A_{\sep})^{\times}) \arrow{r} & H^{0}(F,K_{s}^{\times}) \arrow{r}{\partial(A_{\sep})} & H^{0}(F,\Div(A_{\sep})) \arrow{r}{\delta} & H^{1}(F, G_{\sep}^{\ast}) \arrow{r} & 0 
		\end{tikzcd}
	\end{center}
	with exact rows and vertical arrows isomorphisms. By the universal property of the cokernel, $\Div(\alpha)$ descends to a well-defined map $Z(A) \colon \Cl(A) \to H^{1}(F, G_{\sep}^{\ast})$ which sends the $[D] \in \Cl(A)$ to $\delta(\Div(\alpha)(D))$. By (e.g.) the Five Lemma, $Z(A)$ is an isomorphism. 
\end{proof}

Note that we can be more explicit in describing $Z(A)$. Let $[D] \in \Cl(A)$, and set $D' = \Div(\alpha)(D)$. Since the map $\partial(A_{\sep}) \colon K_{s}^{\times} \to \Div(A_{\sep})$ is surjective, there exists $x \in K_{s}^{\times}$ such that $D' = \partial(A_{\sep})(x)$. One can accordingly define a cocycle $\sigma_{x} \colon \Gamma \to A_{\sep}^{\times}$ by setting $\sigma_{x}(\gamma) = \gamma(x)/x$ for $\gamma \in \Gamma$, and the class of $\sigma_{x}$ in $H^{1}(F, A_{\sep}^{\times}) = H^{1}(F, G_{\sep}^{\ast})$ does not depend on the choice of $x$. The map $Z(A)$ then takes $[D]$ to $[\sigma_{x}]$ in $H^{1}(F, G_{\sep}^{\ast})$.

\begin{thm}\label{GalFixedTorsors}
	Let $G$ be a geometrically integral, smooth group scheme over $F$. Put $A = F[G], K = \Frac(A)$, and $K_{s} = \Frac(A_{\sep})$. Suppose that $\Cl(A_{\sep}) = 0$, and $G_{\sep}^{\ast}[n] = 0$. Then the natural map $\Psi(-, n)(\alpha) \colon \Psi_{\nm}(A, n) \to \Psi_{\nm}(A_{\sep}, n)$ is an embedding of $\Psi_{\nm}(A, n)$ onto $H^{0}(F, \Psi_{\nm}(A_{\sep}, n))$.   
\end{thm}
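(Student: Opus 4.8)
The plan is to exhibit $\Psi_{\nm}(A, n)$ and $H^{0}(F, \Psi_{\nm}(A_{\sep}, n))$ as the middle terms of two short exact sequences with identical outer terms, and to show that $\Psi(-,n)(\alpha)$ induces a morphism between these sequences which is the identity on the kernels and an isomorphism on the cokernels; the five lemma then finishes the proof. First I would pin down the target. Since $G$ is smooth and geometrically integral, $A$ and $A_{\sep}$ are regular domains and $A_{\sep}$ is integral; because $\Cl(A_{\sep}) = 0$ we get $\Pic(A_{\sep}) = 0$, so $\Delta_{n}(A_{\sep})$ is an isomorphism by Corollary \ref{KMuNTorsors}. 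It is $\Gamma$-equivariant, and by the third property of $\Psi_{\nm}$ recorded in Section \ref{MuNTorsors} (applied over $F_{\sep}$) it restricts to a $\Gamma$-equivariant isomorphism of $G_{\sep}^{\ast}/(G_{\sep}^{\ast})^{n}$ onto $\Psi_{\nm}(A_{\sep}, n)$. Hence $H^{0}(F, \Psi_{\nm}(A_{\sep}, n)) \cong (G_{\sep}^{\ast}/(G_{\sep}^{\ast})^{n})^{\Gamma}$. Moreover $\Psi(-,n)(\alpha)$ preserves normalized classes and, since $\gamma\circ\alpha = \alpha$ for every $\gamma\in\Gamma$, has image in the $\Gamma$-fixed part, so it does define a map $\Psi_{\nm}(A, n) \to H^{0}(F, \Psi_{\nm}(A_{\sep}, n))$. (Rosenlicht's theorem, which underlies the third property of $\Psi_{\nm}$, will also be used below.)

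Next I would produce the two sequences. Restricting the sequence of Proposition \ref{MuTorsExactSeq} to normalized classes gives the exact sequence
\[0 \to G^{\ast}/(G^{\ast})^{n} \to \Psi_{\nm}(A, n) \xrightarrow{\ \Theta_{n}(A)\ } \Pic(A)[n] \to 0, \]
where exactness at the left and middle comes from the identity $\Delta_{n}(A)^{-1}(\Psi_{\nm}(A,n)) = G^{\ast}/(G^{\ast})^{n}$, and surjectivity of $\Theta_{n}(A)$ survives because the pullback $\Psi(A,n) \to \Psi(F,n) = F^{\times}/(F^{\times})^{n}$ along $\varepsilon_{F}$ is a split surjection, so any representative of a class in $\Pic(A)[n]$ can be corrected by a class pulled back from $\Spec(F)$ to become normalized without changing its image in $\Pic(A)[n]$. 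On the other hand, since $G_{\sep}^{\ast}[n] = 0$ the sequence $0 \to G_{\sep}^{\ast} \xrightarrow{\ n\ } G_{\sep}^{\ast} \to G_{\sep}^{\ast}/(G_{\sep}^{\ast})^{n} \to 0$ of $\Gamma$-modules is exact, and its long exact cohomology sequence together with Lemma \ref{ClassGroupCohomology} (and $\Pic(A)[n] \cong \Cl(A)[n]$, as $A$ is regular) yields the exact sequence
\[0 \to G^{\ast}/(G^{\ast})^{n} \to (G_{\sep}^{\ast}/(G_{\sep}^{\ast})^{n})^{\Gamma} \xrightarrow{\ \delta\ } H^{1}(F, G_{\sep}^{\ast})[n] \cong \Pic(A)[n] \to 0, \]
with $\delta$ the connecting homomorphism.

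The heart of the argument is then to check that $\Delta_{n}(A_{\sep})^{-1}\circ\Psi(-,n)(\alpha)$ is a morphism from the first sequence to the second. On $G^{\ast}/(G^{\ast})^{n}$ it is the identity, by naturality of $\Delta$. What remains is compatibility with the two surjections onto $\Pic(A)[n]$, which I would verify using an ideal representative. Given $X \in \Psi_{\nm}(A, n)$ with ideal representative $(I, m_{f})$ (so $I \subseteq A$, $I^{n} = fA$), the ideal $I_{\sep} = IA_{\sep}$ is principal since $\Pic(A_{\sep}) = 0$, say $I_{\sep} = \tilde{y}A_{\sep}$ with $\tilde{y} \in K_{s}^{\times}$, and then $f = \tilde{y}^{\,n}w$ for some $w \in A_{\sep}^{\times}$; a short computation shows $\Delta_{n}(A_{\sep})^{-1}(\Psi(-,n)(\alpha)(X)) = [w^{-1}]$. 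Because $X$ is normalized, $[w^{-1}]$ lies in the summand $G_{\sep}^{\ast}/(G_{\sep}^{\ast})^{n}$; after modifying $\tilde{y}$ by a unit we may assume $w^{-1} = \chi^{\sharp}(t)$ for a character $\chi \in G_{\sep}^{\ast}$ (well defined modulo $nG_{\sep}^{\ast}$), so that $\chi^{\sharp}(t) = \tilde{y}^{\,n}/f$ in $A_{\sep}^{\times}$. Applying $\gamma \in \Gamma$ and using that $f \in A$ is fixed gives $\bigl(\gamma(\tilde{y})/\tilde{y}\bigr)^{n} = (\gamma\chi - \chi)^{\sharp}(t)$; writing $\gamma\chi - \chi = n\eta_{\gamma}$ with $\eta_{\gamma} \in G_{\sep}^{\ast}$, this says $\gamma(\tilde{y})/\tilde{y}$ and $\eta_{\gamma}^{\sharp}(t)$ differ by an $n$-th root of unity in $A_{\sep}$. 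This is the one place where $G_{\sep}^{\ast}[n] = 0$ is genuinely used: together with Rosenlicht it forces $\mu_{n}(A_{\sep}) = \mu_{n}(F_{\sep})$, so the $G_{\sep}^{\ast}$-component of the cocycle $\gamma \mapsto \gamma(\tilde{y})/\tilde{y}$ is exactly $\gamma \mapsto \eta_{\gamma}$, whence $[\gamma \mapsto \eta_{\gamma}] = \delta(\chi)$. Since $\tilde{y}A_{\sep} = I_{\sep}$, the explicit description of $Z(A)$ following Lemma \ref{ClassGroupCohomology} identifies the class of $\gamma \mapsto \gamma(\tilde{y})/\tilde{y}$ with $Z(A)$ applied to the divisor class of $I$, i.e.\ with the image of $\Theta_{n}(A)(X)$ under $\Pic(A)[n] \cong \Cl(A)[n] \xrightarrow{Z(A)} H^{1}(F, G_{\sep}^{\ast})[n]$; this is precisely the commutativity of the right-hand square. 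The five lemma now shows that $\Delta_{n}(A_{\sep})^{-1}\circ\Psi(-,n)(\alpha)$, and hence $\Psi(-,n)(\alpha)$, is an isomorphism of $\Psi_{\nm}(A, n)$ onto $H^{0}(F, \Psi_{\nm}(A_{\sep}, n))$.

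I expect the main obstacle to be exactly this last compatibility check: matching the line-bundle class $\Theta_{n}(A)(X)$ with the Galois connecting class $\delta(\chi)$ requires selecting the ideal representative with care, trivializing $I_{\sep}$ over $A_{\sep}$, and then tracking the Rosenlicht decomposition and the identity $\mu_{n}(A_{\sep}) = \mu_{n}(F_{\sep})$ — the latter being the precise point at which the hypothesis $G_{\sep}^{\ast}[n] = 0$ is needed. By contrast, the reductions in the first two paragraphs are routine given Corollary \ref{KMuNTorsors}, Proposition \ref{MuTorsExactSeq}, Lemma \ref{ClassGroupCohomology}, and Rosenlicht's theorem.
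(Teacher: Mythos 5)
Your proposal is correct and follows essentially the same architecture as the paper's proof: pin down $H^{0}(F,\Psi_{\nm}(A_{\sep},n))$ as $(G_{\sep}^{\ast}/(G_{\sep}^{\ast})^{n})^{\Gamma}$ via Rosenlicht and $\Delta_{n}(A_{\sep})$, produce a comparison of exact sequences (one from Proposition~\ref{MuTorsExactSeq} restricted to normalized classes, the other from the $\Gamma$-cohomology of $n$-multiplication on $G_{\sep}^{\ast}$), match the two boundary maps via the explicit cocycle description of $Z(A)$ from Lemma~\ref{ClassGroupCohomology} using an ideal representative, and finish with the five lemma. The differences are presentational rather than conceptual, but two of them are worth pointing out because you are slightly more careful than the paper in each. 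First, the paper writes a five-term resolution $1 \to G^{\ast}/(G^{\ast})^{n} \to \Div_{\nm}(A,n) \to \Cl(A) \xrightarrow{\cdot n} \Cl(A) \to 0$ whose exactness at the first $\Cl(A)$ (equivalently, surjectivity of $\Theta_{n}(A)$ after restriction to normalized classes) it does not really justify; your split-surjection argument via $\Psi(A,n) \to \Psi(F,n)$ along $\varepsilon_{F}$ supplies exactly the missing verification and lets you work with the cleaner four-term sequence ending in $\Pic(A)[n]$. Second, the paper asserts that $\gamma \mapsto \gamma(x)/x$ is a $G_{\sep}^{\ast}$-valued cocycle, but a priori this cocycle only takes values in $A_{\sep}^{\times}$ and may have an $F_{\sep}^{\times}$-component; your observation that $G_{\sep}^{\ast}[n] = 0$ forces $\mu_{n}(A_{\sep}) = \mu_{n}(F_{\sep})$ (or, equivalently, appealing to $H^{1}(F, F_{\sep}^{\times}) = 0$) cleanly disposes of this discrepancy, which the paper passes over silently. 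So you have found the intended proof and tightened two small seams in it.
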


\begin{proof}
	Put $\eta(A) = \dv_{n}(A) \circ \Omega_{n}(A)^{-1} \circ \Delta_{n}(A)$. By Proposition \ref{MuTorsExactSeq}, we have an exact sequence
	\[1 \to G^{\ast}/(G^{\ast})^{n} \xrightarrow{~\eta(A)~} \Div_{\nm}(A, n) \xrightarrow{~~} \Cl(A) \xrightarrow{~\cdot n ~} \Cl(A) \to 0.\]
	Because $G_{\sep}^{\ast}[n] = 0$, there is an exact sequence of $\Gamma$-modules
	\[1 \to G_{\sep}^{\ast} \xrightarrow{~ \cdot n ~} G_{\sep}^{\ast} \to G_{\sep}^{\ast}/(G_{\sep}^{\ast})^{n} \to 1 \]
	which yields the following long exact sequence in Galois cohomology:
	\[1 \to H^{0}(F, G_{\sep}^{\ast}) \xrightarrow{~ \cdot n ~} H^{0}(F, G_{\sep}^{\ast}) \to H^{0}(F, G_{\sep}^{\ast}/(G_{\sep}^{\ast})^{n}) \xrightarrow{~ \delta ~} H^{1}(F, G_{\sep}^{\ast}) \xrightarrow{~ \cdot n ~} H^{1}(F, G_{\sep}^{\ast}) \to \cdots \]
	We can rewrite the above (truncated) long exact sequence as 
	\[1 \to G^{\ast}/(G^{\ast})^{n} \to H^{0}(F, G_{\sep}^{\ast}/(G_{\sep}^{\ast})^{n}) \xrightarrow{~ \delta ~} H^{1}(F, G_{\sep}^{\ast}) \xrightarrow{~ \cdot n ~} H^{1}(F, G_{\sep}^{\ast}) .\]
	The boundary map $\delta$ can be described as follows: let $[u] \in H^{0}(F, G_{\sep}^{\ast}/(G_{\sep}^{\ast})^{n})$. Then $\gamma(u)/u \in (G_{\sep}^{\ast})^{n}$ for any $\gamma \in \Gamma$, so let $x_{\gamma}$ be the {\bf unique} element of $G_{\sep}^{\ast}$ such that $x_{\gamma}^{n} = \gamma(u)/u$. Then $\delta([u])$ is the class of the cocycle $\sigma_{u} \colon \Gamma \to G_{\sep}^{\ast}$ which sends $\gamma$ to $x_{\gamma}$. \\  
	Note that $\Cl(A_{\sep}) \cong \Pic(A_{\sep}) = 0$, and so $\Delta_{n}(A_{\sep})$ is an isomorphism by Corollary \ref{KMuNTorsors}. Let $\tau(A)$ denote the composition
	\[\Div_{\nm}(A, n) \xrightarrow{~ \Omega_{n}(A) \circ \dv_{n}(A)^{-1} ~} \Psi_{\nm}(A, n) \xrightarrow{~ \Psi(-, n)(\alpha)~} \Psi_{\nm}(A_{\sep}, n) \xrightarrow{~ \Delta_{n}(A_{\sep})^{-1}~} G_{\sep}^{\ast}/(G_{\sep}^{\ast})^{n}. \]
	Explicitly, given the class of a pair $(D, g)$ in $\Div_{\nm}(A, n)$, $D' := \Div(\alpha)(D)$ is principal, since $\Cl(A_{\sep}) = 0$, so there exists $x \in K_{s}^{\times}$ such that $\partial(A_{\sep})(x) = D'$; $\tau(A)$ sends $[(D, g)]$ to $[x^{n}/g] \in G_{\sep}^{\ast}/(G_{\sep}^{\ast})^{n}$. If the diagram 
	\begin{center}
		\begin{tikzcd}[column sep = .4 in, row sep = .5 in]
		1 \arrow{r} &  G^{\ast}/(G^{\ast})^{n} \arrow{d}{\Id_{G^{\ast}/(G^{\ast})^{n}}} \arrow{r}{\eta(A)} & \Div_{\nm}(A, n) \arrow{r}{} \arrow{d}{\tau(A)} & \Cl(A) \arrow{d}{Z(A)} \arrow{r}{\cdot n} & \Cl(A) \arrow{d}{Z(A)} \arrow{r} & 0  \\
		1 \arrow{r} & G^{\ast}/(G^{\ast})^{n} \arrow{r} & H^{0}(F,G_{\sep}^{\ast}/(G_{\sep}^{\ast})^{n}) \arrow{r}{\delta} & H^{1}(F,G_{\sep}^{\ast}) \arrow{r}{\cdot n} & H^{1}(F, G_{\sep}^{\ast}) \arrow{r} & 0 
		\end{tikzcd}
	\end{center}
	commutes, then $\tau(A)$ must be an isomorphism, so $\Psi(-, n)(\alpha)$ must be one as well. The last square is manifestly commutative. We have $\tau(A) \circ \eta(A) = \Delta_{n}(A_{\sep})^{-1} \circ \Psi(-, n)(\alpha) \circ \Delta_{n}(A)$, which is easily seen to be the inclusion $G^{\ast}/(G^{\ast})^{n} \to G_{\sep}^{\ast}/(G_{\sep}^{\ast})^{n}$. It remains to show that the middle square commutes. \\
	Let the pair $(D, g)$ represent a class in $\Div_{\nm}(A, n)$, and put $D' = \Div(\alpha)(D)$. Let $x \in K_{s}^{\times}$ such that $D' = \partial(A_{\sep})(x)$, so that $x^{n} = gu$ for some $u \in A_{\sep}^{\times}$. As explained above, $\tau(A)([(D, g)]) = [u] \in G_{\sep}^{\ast}/(G_{\sep}^{\ast})^{n}$. For any $\gamma \in \Gamma$, 
	\[\frac{\gamma(u)}{u} = \frac{\gamma(x^{n}g^{-1})}{x^{n}g^{-1}} = \frac{\gamma(x^{n})}{x^{n}} = \left(\frac{\gamma(x)}{x}\right)^{n}\]
	because $g^{-1}$ is $\Gamma$-invariant. Therefore, $\delta$ takes $[u]$ to the class of the cocycle $\sigma_{u} \colon \Gamma \to G_{\sep}^{\ast}$ defined by $\gamma \mapsto \gamma(x)/x$. On the other hand, as explained in paragraph immediately following Theorem \ref{ClassGroupCohomology}, $Z(A)$ takes $[D]$ to the very same cocycle class, so we're done. 
\end{proof}

\section{Type-Zero Invariants for Connected Reductive Groups} \label{TypeZeroInvariants}

Throughout this section, let $G$ be a smooth, connected, reductive algebraic group over $F$. Let $A = F[G], K = F(G)$, and let $\xi \colon A \to K$ denote the generic point of $G$. 

We are now equipped to determine the groups $\Inv_{\hom}^{0}(G, H)$ for $H = K^{M}_{1} \otimes_{\ZZ} \QQ/\ZZ$ and $H = K^{M}_{1}/n$ for all $n \in \NN$; this is the content of Theorems \ref{TypeZeroInvariantsGaloisQZ} and \ref{TypeZeroInvariantsReductiveModN} respectively. A key step is the observation that, under suitable conditions, a type-zero $H$-invariant of $G$ is determined by its value at $\xi$; precisely, the evaluation homomorphism $\ev_{\xi}(H) \colon \Inv^{0}_{\hom}(G, H) \to H(K)$ sending an invariant $I$ to $I(K)(\xi)$ is injective. Before proving this in Proposition \ref{TypeZeroGenericSet}, we need a technical lemma. For any positive integer $n$, let $p_{n} \colon G \to G$ denote the $n^{\thh}$ power map, which sends $x$ to $x^{n}$ for any $F$-algebra $R$ and any $x \in G(R)$. 

\begin{lem} \label{NthPowerDominant}
	The map $p_{n}$ is dominant.
\end{lem}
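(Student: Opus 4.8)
The plan is to reduce to an algebraically closed base field and there realize a Zariski-dense subset of $G$ inside the image of $p_n$, exploiting that a generic element of $G$ is conjugate into a maximal torus and that the $n$-th power map on a split torus is surjective on points. I expect the only genuine content to be the density of the (regular) semisimple locus; everything else is formal.

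First I would reduce to $F = \bar{F}$. Since $G$ is affine, smooth, connected, and has the identity $F$-point $\varepsilon_F$, it is geometrically integral, so $A = F[G]$ is a domain; consequently $p_n$ is dominant if and only if its comorphism $p_n^{\#} \colon A \to A$ is injective. Injectivity of an $F$-linear map is both preserved and reflected under the faithfully flat extension $F \to \bar{F}$, and $p_n^{\#} \otimes_F \bar{F}$ is the comorphism of $p_{n, \bar{F}} \colon G_{\bar{F}} \to G_{\bar{F}}$; hence it suffices to treat $p_{n, \bar{F}}$, and I assume $F = \bar{F}$ henceforth. I would argue via this flat-base-change statement rather than via generic smoothness of $p_n$, precisely because the differential argument (the differential of $p_n$ at the identity is multiplication by $n$ on $\mathfrak{g}$) collapses when $\chara F \mid n$.

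Now fix a maximal torus $T \subseteq G$ and set $q := p_n|_T \colon T \to T$. Since $T \cong \GG_{m}^{r}$ and every element of $\bar{F}^{\times}$ admits an $n$-th root, $q$ is surjective on points, hence so is $\id_G \times q \colon G \times T \to G \times T$. Next, the conjugation morphism $\mu \colon G \times T \to G$, $(g, t) \mapsto gtg^{-1}$, is dominant: its image contains all regular semisimple elements of $G$, and that locus is a nonempty open --- hence, $G$ being irreducible, dense --- subset of $G$ (structure theory of connected reductive groups; see \cite{MilneAG}). The identity $p_n(gtg^{-1}) = g t^{n} g^{-1} = \mu(g, t^{n})$ shows that the square
\begin{center}
\begin{tikzcd}
G \times T \arrow[r, "\id_G \times q"] \arrow[d, "\mu"'] & G \times T \arrow[d, "\mu"] \\
G \arrow[r, "p_n"] & G
\end{tikzcd}
\end{center}
commutes. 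The top arrow is surjective and the right arrow is dominant, so $p_n \circ \mu = \mu \circ (\id_G \times q)$ is dominant; therefore $p_n$ is dominant.

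The main (and essentially only non-bookkeeping) obstacle is invoking the density of the regular semisimple locus --- equivalently, the dominance of the conjugation map $G \times T \to G$ --- which is where reductivity of $G$ genuinely enters. A self-contained alternative would be to verify that $d\mu$ is surjective at a point $(\varepsilon, t_0)$ with $t_0 \in T$ regular, using the root-space decomposition $\mathfrak{g} = \mathfrak{t} \oplus \bigoplus_{\alpha} \mathfrak{g}_{\alpha}$, so that $\mu$ is smooth --- hence open --- near that point; but this again relies on reductive structure theory, so I would simply cite \cite{MilneAG}.
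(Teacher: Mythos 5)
Your proof is correct and follows essentially the same path as the paper's: reduce to $\bar{F}$ by faithfully flat base change, then use that the union of maximal tori (equivalently, the regular semisimple locus) is dense in the reductive group $G$, together with surjectivity of the $n$-th power map on each torus. The paper argues directly that the image of $p_n$ contains every torus of $G$ (citing that Cartan subgroups, here the maximal tori, cover a dense open subset), whereas you package the same density fact via dominance of the conjugation morphism $\mu \colon G \times T \to G$ and a commutative square; the underlying content, and the point where reductivity is used, are identical.
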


\begin{proof}
	Since the property of dominance descends under faithfully flat base change, we may assume that our base field $F$ is algebraically closed. By (e.g.) \cite[Theorem 17.44]{MilneAG}, the union of the Cartan subgroups of $G$ contains a dense open subset of $G$. Since $G$ is reductive, the Cartan subgroups of $G$ are precisely the maximal tori in $G$. But the restriction of $p_{n}$ to any torus in $G$ is surjective, and so the image of $p_{n}$ contains every torus in $G$.  
\end{proof}

\begin{prop}\label{TypeZeroGenericSet}
	Suppose $H$ is the $d^{\thh}$ graded component of a torsion cycle module. Let $I \in \Inv^{0}(G, H)$, and suppose $I(K)(\xi) = 1_{H(K)}$. Suppose that for any field extension $L/F$ and any $\alpha, \beta \in G(L)$, $I$ satisfies
	\[I(L)(\alpha)I(L)(\beta) = I(L)(\alpha\beta)I(L)(\varepsilon_{L}).\]
	Then $I$ is trivial. 
\end{prop}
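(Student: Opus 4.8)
The plan is to prove the two stronger assertions that $I$ is \emph{constant} — meaning $I(L)(\alpha)=I(L)(\varepsilon_L)$ for every field extension $L/F$ and every $\alpha\in G(L)$ — and that this common value is trivial. The only non-formal input I will use, beyond naturality of $I$ and one application of the displayed multiplicativity relation, is a standard feature of a cycle module $M$: if $X$ is a smooth integral $F$-variety with a rational point $x\in X(F)$, then the restriction $M(F)\to M(F(X))$ is split injective, a splitting being given by specialization at $x$ (iterated residues along a regular system of parameters at $x$). Since $G$ is smooth and connected with the rational point $\varepsilon_F$, it is geometrically integral, so $G_L$ is integral for every $L/F$; write $L(G)$ for its function field. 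Applying the above to $X=G_L$ with $x=\varepsilon_L$, and to $X=G$ with $x=\varepsilon_F$, the restriction maps $H(L)\to H(L(G))$ and $H(F)\to H(F(G))$ are injective.

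First I would treat the generic point over an arbitrary $L/F$. The generic point $\xi_L\in G(L(G))$ of $G_L$ is the image of $\xi\in G(F(G))$ under the field extension $F(G)\hookrightarrow L(G)$, so naturality gives $I(L(G))(\xi_L)=\mathrm{res}(I(F(G))(\xi))=1$. Now fix $\alpha\in G(L)$, viewed also as a constant point of $G(L(G))$. Left translation $\tau_\alpha\colon G_L\to G_L$ is an \emph{isomorphism} of varieties over $L$, hence induces an $F$-automorphism $\widetilde{\tau}_\alpha$ of the field $L(G)$, and unwinding the definitions shows that $\alpha\cdot\xi_L\in G(L(G))$ is the image of $\xi_L$ under $G(\widetilde{\tau}_\alpha)$. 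Since $I$ is natural with respect to field automorphisms and such automorphisms fix the identity element of $H$, we get $I(L(G))(\alpha\cdot\xi_L)=\widetilde{\tau}_\alpha\bigl(I(L(G))(\xi_L)\bigr)=1$. Applying the hypothesis over the field $L(G)$ to the points $\alpha$ and $\xi_L$,
\[
I(L(G))(\alpha)\,I(L(G))(\xi_L)=I(L(G))(\alpha\cdot\xi_L)\,I(L(G))(\varepsilon_{L(G)}),
\]
and the two middle terms being trivial yields $I(L(G))(\alpha)=I(L(G))(\varepsilon_{L(G)})$. Using naturality to identify these with $\mathrm{res}(I(L)(\alpha))$ and $\mathrm{res}(I(L)(\varepsilon_L))$ and invoking injectivity of $H(L)\to H(L(G))$ gives $I(L)(\alpha)=I(L)(\varepsilon_L)$, as wanted.

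It then remains to show $I(F)(\varepsilon_F)=1$, since $I(L)(\varepsilon_L)=\mathrm{res}(I(F)(\varepsilon_F))$ for every $L$. For this I would specialize the previous paragraph to $L=F(G)$ and $\alpha=\xi$, obtaining $1=I(F(G))(\xi)=I(F(G))(\varepsilon_{F(G)})=\mathrm{res}(I(F)(\varepsilon_F))$; injectivity of $H(F)\to H(F(G))$ then forces $I(F)(\varepsilon_F)=1$. Combining, $I(L)(\alpha)=I(L)(\varepsilon_L)=\mathrm{res}(1)=1$ for all $L/F$ and $\alpha\in G(L)$, so $I$ is trivial.

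The step I expect to require the most care is the translation bookkeeping in the second paragraph: one must genuinely check that $\alpha\cdot\xi_L$ is the generic point transported by a field \emph{automorphism} of $L(G)$ (which uses that $\tau_\alpha$ is an isomorphism, not merely a dominant morphism) and that $I$ intertwines this automorphism on source and target. Everything else is formal except the split injectivity of cycle-module restriction maps along $L(G)/L$, which I would take from Rost's formalism via specialization at $\varepsilon_L$. Incidentally, this route uses neither the torsion hypothesis on the cycle module nor Lemma~\ref{NthPowerDominant}.
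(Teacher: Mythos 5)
Your proof is correct, and it is both simpler and more general than the one in the paper. The paper's argument is built around a power trick: the torsion hypothesis on the cycle module is used to choose $n$ with $I(E)(\xi_E)^{n} = I(E)(\varepsilon_E)^{n} = 1$, and Lemma~\ref{NthPowerDominant} (dominance of the $n$-th power map, which is exactly where reductivity of $G$ enters) is used to build a field morphism $i \colon K \to E$ with $G(i)(\xi) = \xi_E^{n} \cdot t_E$; iterating the multiplicative hypothesis $n$ times then transports the triviality of $I(K)(\xi)$ to $I(L)(t)$. Your translation argument replaces the $n$-fold multiplication by the generic point with a \emph{single} multiplication, exploiting that left translation $\tau_\alpha$ by $\alpha \in G(L)$ is an isomorphism of $G_L$, hence induces a field \emph{automorphism} $\widetilde{\tau}_\alpha$ of $L(G)$ rather than merely a dominant self-map as in the paper's $p_{n,s}$. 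This dispenses with both the torsion hypothesis and reductivity: your argument needs only that $G$ is smooth and connected (which, given the rational point $\varepsilon_F$, forces geometric integrality). Both arguments share the same essential cycle-module input, namely the split injectivity of $H(L) \to H(L(G))$ by specialization at $\varepsilon_L$, which is precisely \cite[Lemma~1.3]{MerkurjevIAG}. The bookkeeping you flagged as delicate does check out: $\widetilde{\tau}_\alpha$ restricted to $L[G]$ is $\tau_\alpha^{\sharp} = (\alpha^{\sharp} \otimes \Id) \circ m_L^{\sharp}$, and precomposing this with $F[G] \hookrightarrow L[G]$ and postcomposing with $L[G] \hookrightarrow L(G)$ reproduces the ring map $(\alpha \cdot \xi_L)^{\sharp} = \mult \circ (\alpha^{\sharp} \otimes \xi_L^{\sharp}) \circ m^{\sharp}$ defining $\alpha \cdot \xi_L$; since $\widetilde{\tau}_\alpha$ is a morphism in $\Fields_F$, naturality of $I$ applies and gives $I(L(G))(\alpha \cdot \xi_L) = H(\widetilde{\tau}_\alpha)(1) = 1$ as you claim.
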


\begin{proof}
	Let $L/F$ be a field extension, and fix $t \in G(L)$. Put $S := G_{L}$, and let $g \colon S \to G$ be the canonical base change morphism, with comorphism $f \colon A \to A_{L}$.  Let $E = L(S)$, and let $\xi' \colon A_{L} \to E$ be the generic point of $S$. Since $f$ is injective, the composition $\xi' \circ f$ extends to a morphism $u \colon K \to E$ of $F$-algebras such that $u \circ \xi = \xi' \circ f$. Put $\xi_{E} := u \circ \xi$, and let $n$ be a positive integer such that $I(E)(\xi_{E})^{n} = I(E)(\varepsilon_{E})^{n} = 1$. \\
	Suppose that there exist morphisms $i \colon K \to E, j \colon L \to E$ satisfying the following two properties:
	\begin{enumerate}[(a)]
		\item $H(j) \colon H(L) \to H(E)$ is injective;
		\item $G(i)(\xi) = (\xi_{E})^{n} \cdot t_{E}$, where $t_{E} := j \circ t$. 
	\end{enumerate}
	Then we have 
	\[H(j)(I(L)(t)) = I(E)(t_{E}) = I(E)(\xi_{E})^{n}I(E)(t_{E})I(E)(\varepsilon_{E})^{-n} = I(E)(({\xi}_{E})^{n} \cdot t_{E}), \]
	whence we conclude
	\[H(j)(I(L)(t)) = I(E)(G(i)(\xi)) = H(i)(I(K)(\xi)) = 1_{H(E)}.\]
	We therefore devote the remainder of the proof to constructing such a pair $(i, j)$. Let $j \colon L \to E$ denote the composition of the structural map $L \to A_{L}$ with $\xi'$. Since $S$ is a smooth algebraic $L$-variety such that $S(L) \neq \emptyset$, $H(j)$ is injective by \cite[Lemma 1.3]{MerkurjevIAG}. \\
	To construct $i$, let $s \colon A_{L} \to L$ be the unique $L$-algebra morphism such that $t = s \circ f = g(L)(t)$, and put $s_{E} = j \circ s$, so that $t_{E} = s_{E} \circ f$. Let $p_{n, s} \colon S \to S$ be the morphism of $L$-schemes given by the composition of the $n^{\thh}$ power map $p_{n}$ with right translation by $s$. By Corollary \ref{NthPowerDominant}, $p_{n, s}$ is dominant, and so the associated comorphism $h \colon A_{L} \to A_{L}$ is injective. In particular, the composition $\xi' \circ h$ extends to a morphism $v \colon E \to E$ of $L$-algebras such that $v \circ \xi' = \xi' \circ h$. Putting $i = v \circ u$,  we claim that $i$ satisfies (b). \\
	On the one hand, we have $p_{n, s}(E)(\xi') = \xi' \circ h$, but by definition of $p_{n, s}$, we also have $p_{n, s}(E)(\xi') = (\xi')^{n} \cdot s_{E}$. Accordingly, this yields 
	\[G(i)(\xi) = v \circ u \circ \xi = \xi' \circ h \circ f = g(E)(p_{n, s}(E)(\xi')).\]
	But we compute 
	\[g(E)(p_{n, s}(\xi')) = g(E)((\xi')^{n} \cdot s_{E}) = g(E)(\xi')^{n} \cdot g(E)(s_{E}) = (\xi_{E})^{n} \cdot t_{E},\]
	which establishes (b). 
	
\end{proof}

\begin{cor}\label{TypeZeroGenericGroup}
	The morphism $\ev_{\xi}(H) \colon \Inv^{0}_{\hom}(G, H) \to H(K)$ is injective. \qed
\end{cor}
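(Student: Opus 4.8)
The plan is to deduce Corollary \ref{TypeZeroGenericGroup} directly from Proposition \ref{TypeZeroGenericSet} by a standard ``the kernel is trivial, hence injective'' argument. Since $\Inv^{0}_{\hom}(G, H)$ is a group and $\ev_{\xi}(H)$ is a group homomorphism (composition of evaluation at $K$ with evaluation at the $K$-point $\xi$, both of which respect the group structure on $H$-valued functors), it suffices to show that $\ker(\ev_{\xi}(H))$ is trivial. So I would take an arbitrary $I \in \Inv^{0}_{\hom}(G, H)$ with $\ev_{\xi}(H)(I) = I(K)(\xi) = 1_{H(K)}$ and argue that $I$ is the trivial invariant.

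The key point is that such an $I$ satisfies the hypotheses of Proposition \ref{TypeZeroGenericSet}. Indeed, $H = K^{M}_{1} \otimes_{\ZZ} \QQ/\ZZ$ (or $H = K^{M}_{1}/n$) is, as recalled in the introduction, the first graded component of a torsion cycle module in the sense of Rost, so the ``torsion cycle module'' hypothesis of Proposition \ref{TypeZeroGenericSet} is met. The multiplicativity condition
\[
I(L)(\alpha)I(L)(\beta) = I(L)(\alpha\beta)I(L)(\varepsilon_{L})
\]
holds for all field extensions $L/F$ and all $\alpha, \beta \in G(L)$ precisely because $I$ is \emph{homomorphic}: viewing $G$ and $H$ as group-valued functors, $I(L) \colon G(L) \to H(L)$ is a group homomorphism, so $I(L)(\alpha\beta) = I(L)(\alpha)I(L)(\beta)$ and $I(L)(\varepsilon_{L}) = 1_{H(L)}$, which gives the displayed identity a fortiori. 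Together with $I(K)(\xi) = 1_{H(K)}$, Proposition \ref{TypeZeroGenericSet} then forces $I$ to be trivial, so $\ker(\ev_{\xi}(H)) = 0$ and $\ev_{\xi}(H)$ is injective.

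There is essentially no obstacle here: the corollary is a formal consequence of the proposition once one observes that homomorphic invariants automatically satisfy the multiplicativity hypothesis and that $H$ sits inside a torsion cycle module. The only mild subtlety worth spelling out is why $\ev_{\xi}(H)$ is a group homomorphism — this is because the group structure on $\Inv^{0}_{\hom}(G, H)$ is defined pointwise using the group structure of the target functor $H$, and evaluation at a field and at a point of $G$ are compatible with that structure — but this is immediate from the definitions. Hence the proof is just the two sentences: $\ev_{\xi}(H)$ is a homomorphism of abelian groups, and by Proposition \ref{TypeZeroGenericSet} its kernel is trivial.
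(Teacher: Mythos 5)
Your proof is correct and matches the paper's intent: the corollary is stated with \qed precisely because it is an immediate consequence of Proposition \ref{TypeZeroGenericSet}, the only observation needed being that a homomorphic invariant automatically satisfies the multiplicativity hypothesis (with $I(L)(\varepsilon_L)$ trivial), so vanishing at $\xi$ forces the invariant to vanish.
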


For any fixed field extension $L/F$, there is a map $\Psi(A, n) \times G(L) \to L^{\times}/(L^{\times})^{n}$ which sends the pair $(X, y)$ to $\Delta_{n}(L)^{-1}(\Psi(-, n)(y)(X))$. If we fix a class $X \in \Psi(A, n)$ in the first argument, we obtain a set map $I_{X}(L) \colon G(L) \to L^{\times}/(L^{\times})^{n}$. As $L$ varies, the collection of maps $I_{X}$ determines an invariant in $\Inv^{0}(G, K^{M}_{1}/n)$. If $X$ is \emph{normalized}, then Corollary \ref{NormalizedGroupOp} shows that $I_{X}$ is homomorphic. We thus obtain a group homomorphism $\Lambda_{n}(G) \colon \Psi_{\nm}(A, n) \to \Inv^{0}_{\hom}(G, K^{M}_{1}/n)$. As the next theorem shows, $\Lambda_{n}(G)$ is in fact an isomorphism. 

\begin{thm} \label{TypeZeroInvariantsReductiveModN}
	The map 
	\[\Lambda_{n}(G) \colon \Psi_{\nm}(A, n) \to \Inv^{0}_{\hom}(G, K^{M}_{1}/n)\] 
	sending a class $X \in \Psi_{\nm}(A, n)$ to the invariant $I_{X}$ is an isomorphism. 
\end{thm}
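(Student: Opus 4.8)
The plan is to show that $\Lambda_{n}(G)$ is both injective and surjective by exploiting the generic-point description of type-zero invariants. The key structural tool is Corollary \ref{TypeZeroGenericGroup}, which says that a homomorphic type-zero invariant with values in the torsion cycle module $K^{M}_{1}/n$ is determined by its value at the generic point $\xi \in G(K)$. So the first step is to set up the commutative square relating $\Lambda_{n}(G)$ to evaluation at $\xi$: composing $\Lambda_{n}(G)$ with $\ev_{\xi}(K^{M}_{1}/n)$ sends a normalized class $X \in \Psi_{\nm}(A,n)$ to $I_{X}(K)(\xi) = \Delta_{n}(K)^{-1}(\Psi(-,n)(\xi)(X)) \in K^{\times}/(K^{\times})^{n}$. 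By the discussion preceding Theorem \ref{TorsKerDivMap}, this is exactly the map $\Delta_{n}(K)^{-1} \circ \Psi(-,n)(\xi)$, which by Theorem \ref{TorsKerDivMap} is an \emph{isomorphism} from $\Psi(A,n)$ onto $\ker(\partial_{n}(A))$ (using that $G$ is smooth, hence $A = F[G]$ is regular).

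\textbf{Injectivity.} Since $\ev_{\xi}(K^{M}_{1}/n)$ is injective by Corollary \ref{TypeZeroGenericGroup} and $\ev_{\xi}(K^{M}_{1}/n) \circ \Lambda_{n}(G)$ is the restriction of the injective map $\Delta_{n}(K)^{-1} \circ \Psi(-,n)(\xi)$ to $\Psi_{\nm}(A,n)$, the map $\Lambda_{n}(G)$ is injective. (In fact injectivity of $\Lambda_{n}(G)$ already follows directly from injectivity of $\Delta_{n}(K)^{-1} \circ \Psi(-,n)(\xi)$, without invoking Corollary \ref{TypeZeroGenericGroup}, but it is cleanest to run everything through the generic point.)

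\textbf{Surjectivity.} This is the main obstacle. Given $I \in \Inv^{0}_{\hom}(G, K^{M}_{1}/n)$, we must produce a normalized class $X$ with $I_{X} = I$. By Corollary \ref{TypeZeroGenericGroup} it suffices to find $X \in \Psi_{\nm}(A,n)$ with $I_{X}(K)(\xi) = I(K)(\xi)$, i.e. with $(\Delta_{n}(K)^{-1} \circ \Psi(-,n)(\xi))(X) = I(K)(\xi)$. Since $\Delta_{n}(K)^{-1} \circ \Psi(-,n)(\xi)$ maps $\Psi(A,n)$ isomorphically onto $\ker(\partial_{n}(A))$, the crux is to show that the class $I(K)(\xi) \in K^{\times}/(K^{\times})^{n}$ lies in $\ker(\partial_{n}(A))$, and then that the resulting preimage $X \in \Psi(A,n)$ is actually normalized. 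For the first point, I would argue that the invariant $I$, being defined on all field extensions and natural, is "unramified" along $G$: for each codimension-one point (height-one prime $\mathfrak{p}$ of $A$ with residue field $\kappa(\mathfrak{p})$ and fraction field of $A/\mathfrak{p}$), one uses the specialization/residue structure of the cycle module $K^{M}_{1}/n$ together with the fact that $\xi$ specializes to the generic point of the corresponding prime divisor — concretely, $I(K)(\xi)$ has trivial residue at $\mathfrak{p}$ because $\xi$ and a nearby point agree after specialization, forcing $\partial_{\mathfrak{p}}(I(K)(\xi)) = 0$; running over all height-one $\mathfrak{p}$ gives $\partial_{n}(A)(I(K)(\xi)) = 0$. (This is the standard mechanism: values of invariants at generic points are unramified.) For normalization, let $X \in \Psi(A,n)$ be the unique preimage; then $\Psi(-,n)(\varepsilon_{F})(X)$ is computed by pulling back the corresponding element of $K^{\times}/(K^{\times})^{n}$ along the identity section, and one checks this equals $I(F)(\varepsilon_{F})$, which is trivial since $I$ is homomorphic and $\varepsilon_{F}$ is the identity of the group $G(F)$ (a homomorphism sends the identity to the identity). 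Hence $X \in \ker(\Psi(-,n)(\varepsilon_{F})) = \Psi_{\nm}(A,n)$.

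\textbf{Conclusion.} Once we know $I(K)(\xi) = I_{X}(K)(\xi)$ for this normalized $X$, Corollary \ref{TypeZeroGenericGroup} forces $I = I_{X} = \Lambda_{n}(G)(X)$, completing surjectivity. The delicate part is really the unramifiedness argument — making precise, purely from naturality of $I$ and the axioms of a cycle module, that $\partial_{\mathfrak{p}}(I(K)(\xi)) = 0$ for every height-one prime $\mathfrak{p} \subset A$; everything else is bookkeeping with the isomorphisms $\Delta_{n}$, $\Theta_{n}$, $\Omega_{n}$, $\lambda_{n}$ already established, plus Corollary \ref{TypeZeroGenericGroup} and Theorem \ref{TorsKerDivMap}.
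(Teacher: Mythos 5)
Your overall structure — reduce to the generic point, show the generic value is unramified, then invert $\Delta_{n}(K)^{-1} \circ \Psi(-,n)(\xi)$ — is the right one, and the injectivity argument is exactly the paper's. But the surjectivity half has two gaps, one of which you flag yourself and one of which you don't.

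First, you correctly identify that you need $I(K)(\xi) \in \ker(\partial_{n}(A))$ and you sketch an unramifiedness heuristic, but you never close it; the paper invokes \cite[Lemma 2.1]{MerkurjevIAG}, which says precisely that the composite $\ev_{\xi} \circ$ (anything in $\Inv^{0}$) lands in $\ker(\partial_{n}(A))$ because the sequence $\Inv^{0}_{\hom}(G, K^{M}_{1}/n) \to K^{\times}/(K^{\times})^{n} \to \Div(A)/n\Div(A)$ is a complex. You should cite this rather than gesture at it.

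Second, and more seriously, your ordering of the last two steps is circular. You want to show that the preimage $X \in \Psi(A,n)$ is normalized \emph{before} you apply Corollary \ref{TypeZeroGenericGroup}, and to do so you claim $\Psi(-,n)(\varepsilon_{F})(X) = \Delta_{n}(F)(I(F)(\varepsilon_{F}))$ via "pulling back along the identity section." But $\varepsilon_{F} \colon A \to F$ does not factor through $\xi \colon A \to K$, so this "pullback" is really the assertion that $I_{X}(F)(\varepsilon_{F}) = I(F)(\varepsilon_{F})$ — which is a consequence of $I_{X} = I$, the very thing you haven't yet established. Moreover, Corollary \ref{TypeZeroGenericGroup} applies only to \emph{homomorphic} invariants, and $I_{X}$ is known to be homomorphic only once you know $X$ is normalized (Corollary \ref{NormalizedGroupOp}). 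The paper breaks this loop by going through Theorem \ref{PointsMultTorsors} and Proposition \ref{TypeZeroGenericSet} instead: Theorem \ref{PointsMultTorsors} gives the multiplicative identity $I_{X}(L)(\alpha)I_{X}(L)(\beta) = I_{X}(L)(\alpha\beta)I_{X}(L)(\varepsilon_{L})$ for \emph{every} $X \in \Psi(A,n)$ (normalized or not), and Proposition \ref{TypeZeroGenericSet} requires only that hypothesis (not homomorphicity), so one concludes $I = I_{X}$ directly from agreement at $\xi$. Normalization of $X$ then drops out for free, since $I_{X}(F)(\varepsilon_{F}) = I(F)(\varepsilon_{F})$ is trivial because $I$ is homomorphic. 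Swap your last two steps and route them through Theorem \ref{PointsMultTorsors} and Proposition \ref{TypeZeroGenericSet}, rather than Corollary \ref{TypeZeroGenericGroup}, and the proof goes through.
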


\begin{proof}
	By Lemma \ref{TorsKerDivMap}, the map $\Delta_{n}(K)^{-1} \circ \Psi(-, n)(\xi) \colon \Psi(A, n) \to \ker(\partial_{n}(A))$ is an isomorphism. Thus, since $\ev_{\xi}(K^{M}_{1}/n) \circ \Lambda_{n}(G)$ coincides with the restriction of $\Delta_{n}(K)^{-1} \circ \Psi(-, n)(\xi)$ to $\Psi_{\nm}(A, n)$, $\Lambda_{n}(G)$ must be injective. \\
	Now, fix an invariant $I \in \Inv^{0}_{\hom}(G, K^{M}_{1}/n)$. By Corollary \ref{TypeZeroGenericGroup}, $\ev_{\xi}(K^{M}_{1}/n)$ is injective. The sequence
	\[\Inv^{0}_{\hom}(G, K^{M}_{1}/n) \xrightarrow{~\ev_{\xi}(K^{M}_{1}/n)~} K^{\times}/(K^{\times})^{n} \xrightarrow{~\partial_{n}(A)~} \Div(A)/n\Div(A) \]
	is a complex by \cite[Lemma 2.1]{MerkurjevIAG}, so $\ev_{\xi}(K^{M}_{1}/n)$ has image contained in $\ker(\partial_{n}(A))$. Letting $X \in \Psi(A, n)$ be a class such that $\Delta_{n}(K)^{-1}(\Psi(-, n)(\xi)(X)) = I(K)(\xi)$, we have $I_{X}(K)(\xi) = I(K)(\xi)$ by construction. We must therefore have $I_{X} = I$ by Theorem \ref{PointsMultTorsors} and Proposition \ref{TypeZeroGenericSet}. But as $I$ is homomorphic, it must be the case that $I_{X}(F)(\varepsilon_{F}) = I(F)(\varepsilon_{F})$ is the trivial class in $F^{\times}/(F^{\times})^{n}$, whence $X$ is normalized, and $\Lambda_{n}(G)(X) = I$.  
\end{proof}

\begin{cor} \label{TypeZeroInvariantsGaloisModN}
	Suppose that $G$ is a torus, and let $\alpha \colon A \to A_{\sep}$ be the canonical base change morphism. Then the map 
	\[(\Delta(A_{\sep}) \circ \Psi(-, n)(\alpha))^{-1} \circ \Lambda_{n}(G) \colon H^{0}(F, G_{\sep}^{\ast}/(G_{\sep}^{\ast})^{n}) \to \Inv^{0}_{\hom}(G, K^{M}_{1}/n)\]
	is an isomorphism. \qed 
\end{cor}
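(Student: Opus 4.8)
The plan is to assemble the isomorphism claimed in Corollary \ref{TypeZeroInvariantsGaloisModN} purely by composing isomorphisms we have already established, so the main work is just to check that each map in the composition is in fact an isomorphism in the torus case. Since $G$ is a torus, it is smooth, connected, and reductive, so Theorem \ref{TypeZeroInvariantsReductiveModN} applies and gives an isomorphism $\Lambda_{n}(G) \colon \Psi_{\nm}(A, n) \to \Inv^{0}_{\hom}(G, K^{M}_{1}/n)$. It therefore suffices to produce an isomorphism from $H^{0}(F, G_{\sep}^{\ast}/(G_{\sep}^{\ast})^{n})$ to $\Psi_{\nm}(A, n)$; composing with $\Lambda_{n}(G)$ then yields the desired map, and unwinding the definitions shows it is exactly $(\Delta_{n}(A_{\sep}) \circ \Psi(-, n)(\alpha))^{-1} \circ \Lambda_{n}(G)$ as stated.

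\textbf{Key steps.} First I would verify that Theorem \ref{GalFixedTorsors} applies to $G$ a torus. A torus is geometrically integral and smooth; its character group $G_{\sep}^{\ast}$ is a (finitely generated) free $\ZZ$-module, hence torsion-free, so $G_{\sep}^{\ast}[n] = 0$; and $\Cl(A_{\sep}) = 0$ because a split torus over $F_{\sep}$ is an open subscheme of affine space, so $A_{\sep} = F_{\sep}[t_{1}^{\pm 1}, \ldots, t_{r}^{\pm 1}]$ is a UFD (more precisely, a localization of a polynomial ring, hence a regular UFD with trivial class group). Thus Theorem \ref{GalFixedTorsors} gives that $\Psi(-, n)(\alpha) \colon \Psi_{\nm}(A, n) \to \Psi_{\nm}(A_{\sep}, n)$ is an embedding with image $H^{0}(F, \Psi_{\nm}(A_{\sep}, n))$. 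Next, since $\Pic(A_{\sep}) \cong \Cl(A_{\sep}) = 0$, Corollary \ref{KMuNTorsors} shows $\Delta_{n}(A_{\sep}) \colon A_{\sep}^{\times}/(A_{\sep}^{\times})^{n} \to \Psi(A_{\sep}, n)$ is an isomorphism; and by the normalization discussion preceding Lemma \ref{ClassGroupCohomology} (using Rosenlicht's theorem, property (3) of $\Psi_{\nm}$), $\Delta_{n}(A_{\sep})$ restricts to a $\Gamma$-equivariant isomorphism $G_{\sep}^{\ast}/(G_{\sep}^{\ast})^{n} \to \Psi_{\nm}(A_{\sep}, n)$. Taking $\Gamma$-invariants of this $\Gamma$-equivariant isomorphism gives $H^{0}(F, G_{\sep}^{\ast}/(G_{\sep}^{\ast})^{n}) \cong H^{0}(F, \Psi_{\nm}(A_{\sep}, n))$. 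Composing the inverse of this with the inverse of the embedding from Theorem \ref{GalFixedTorsors}, and then with $\Lambda_{n}(G)$, produces the claimed isomorphism; the resulting composite is visibly $(\Delta_{n}(A_{\sep}) \circ \Psi(-, n)(\alpha))^{-1} \circ \Lambda_{n}(G)$, where one reads $(\Delta_{n}(A_{\sep}) \circ \Psi(-, n)(\alpha))$ as the isomorphism $H^{0}(F, G_{\sep}^{\ast}/(G_{\sep}^{\ast})^{n}) \to \Psi_{\nm}(A, n)$ obtained by restricting both maps to the normalized subgroups and the $\Gamma$-fixed parts.

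\textbf{Main obstacle.} There is essentially no deep obstacle here: the corollary is a packaging of earlier results, and the only genuine verification is that a split torus over a separably closed field has trivial divisor class group, which is classical. The one point requiring mild care is the compatibility of "taking $\Gamma$-invariants" with "restricting to normalized elements"—that is, confirming that the $\Gamma$-action on $\Psi(A_{\sep}, n)$ really does preserve $\Psi_{\nm}(A_{\sep}, n)$ and matches the $\Gamma$-action on $G_{\sep}^{\ast}/(G_{\sep}^{\ast})^{n}$ under $\Delta_{n}(A_{\sep})$, so that the identification $H^{0}(F, \Psi_{\nm}(A_{\sep}, n)) \cong \Psi_{\nm}(A, n)$ of Theorem \ref{GalFixedTorsors} and the identification $H^{0}(F, G_{\sep}^{\ast}/(G_{\sep}^{\ast})^{n}) \cong H^{0}(F, \Psi_{\nm}(A_{\sep}, n))$ from $\Delta_{n}$ are with respect to the same structures. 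But all of this was already set up carefully in the paragraph preceding Lemma \ref{ClassGroupCohomology}, so the proof reduces to citing Theorem \ref{TypeZeroInvariantsReductiveModN}, Theorem \ref{GalFixedTorsors}, and Corollary \ref{KMuNTorsors}, and observing the composition.
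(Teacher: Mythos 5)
Your proposal is correct and is exactly the argument the paper intends: the \verb|\qed| attached to the statement signals that it is an immediate combination of Theorem \ref{TypeZeroInvariantsReductiveModN}, Theorem \ref{GalFixedTorsors}, and Corollary \ref{KMuNTorsors}, which is precisely what you assemble. You also correctly carry out the only real verification required — that a torus meets the hypotheses of Theorem \ref{GalFixedTorsors} (geometrically integral, smooth, $G_{\sep}^{\ast}[n]=0$ since $G_{\sep}^{\ast}$ is free, and $\Cl(A_{\sep})=0$ since $A_{\sep}$ is a Laurent polynomial ring) — and that $\Delta_{n}(A_{\sep})$ restricts, $\Gamma$-equivariantly, to an isomorphism $G_{\sep}^{\ast}/(G_{\sep}^{\ast})^{n}\to\Psi_{\nm}(A_{\sep},n)$ via property (3) of $\Psi_{\nm}$.
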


For any natural number $n$, let $\Inv^{0}(G, \iota_{n})$ denote the group morphism $\Inv^{0}_{\hom}(G, K^{M}_{1}/n) \to \Inv^{0}_{\hom}(G, K^{M}_{1} \otimes_{\ZZ} \QQ/\ZZ)$ given by composition with $\iota_{n}$. Likewise, if $n$ and $m$ are positive integers such that $n$ divides $m$, let $\Inv^{0}(G, \beta_{n, m})$ denote the group morphism $\Inv^{0}_{\hom}(G, K^{M}_{1}/n) \to \Inv^{0}_{\hom}(G, K^{M}_{1}/m)$ given by composition with $\beta_{n, m}$. Since $\iota_{n} = \iota_{m} \circ \beta_{n, m}$, we obtain a universal induced map 
\[\operatornamewithlimits{colim}\limits_{n \in \NN} \Inv^{0}(G, \iota_{n}) \colon \operatornamewithlimits{colim}\limits_{n \in \NN} \Inv^{0}_{\hom}(G, K^{M}_{1}/n) \to \Inv^{0}_{\hom}(G, K^{M}_{1} \otimes_{\ZZ} \QQ/\ZZ).\] 

\begin{prop} \label{ColimTypeZero}
	The map $\operatornamewithlimits{colim}\limits_{n \in \NN} \Inv^{0}(G, \iota_{n})$ is an isomorphism.
\end{prop}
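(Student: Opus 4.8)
The plan is to show that the universally induced map $\operatornamewithlimits{colim}_{n} \Inv^{0}(G, \iota_{n})$ is both injective and surjective by combining the fact that $K^{M}_{1} \otimes_{\ZZ} \QQ/\ZZ = \operatornamewithlimits{colim}_{n} K^{M}_{1}/n$ (with transition maps the $\beta_{n,m}$) with the concrete description of each $\Inv^{0}_{\hom}(G, K^{M}_{1}/n)$ obtained from Theorem \ref{TypeZeroInvariantsReductiveModN}, together with the injectivity of the evaluation maps $\ev_{\xi}$ coming from Corollary \ref{TypeZeroGenericGroup}. The basic principle is that both sides are computed by evaluation at the generic point $\xi$, and that evaluation commutes with the relevant colimit.

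\begin{proof}
We first argue injectivity. Suppose a class in $\operatornamewithlimits{colim}_{n} \Inv^{0}_{\hom}(G, K^{M}_{1}/n)$ maps to $0$; represent it by some $I \in \Inv^{0}_{\hom}(G, K^{M}_{1}/n)$, so that $\iota_{n} \circ I$ is the trivial invariant in $\Inv^{0}_{\hom}(G, K^{M}_{1} \otimes_{\ZZ} \QQ/\ZZ)$. Evaluating at $\xi$, this says that $\iota_{n}(K)(I(K)(\xi)) = 0$ in $K^{\times} \otimes_{\ZZ} \QQ/\ZZ$. Since $\iota_{n}(K) \colon K^{\times}/(K^{\times})^{n} \to K^{\times} \otimes_{\ZZ} \QQ/\ZZ$ factors through $\beta_{n, m}(K) \colon K^{\times}/(K^{\times})^{n} \to K^{\times}/(K^{\times})^{m}$ for all multiples $m$ of $n$, and since $K^{\times} \otimes_{\ZZ} \QQ/\ZZ = \operatornamewithlimits{colim}_{m} K^{\times}/(K^{\times})^{m}$, there is a multiple $m$ of $n$ such that $\beta_{n, m}(K)(I(K)(\xi)) = 0$, i.e. $(\Inv^{0}(G, \beta_{n,m})(I))(K)(\xi) = 0$. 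By Corollary \ref{TypeZeroGenericGroup}, $\ev_{\xi}(K^{M}_{1}/m)$ is injective, so $\Inv^{0}(G, \beta_{n, m})(I) = 0$ in $\Inv^{0}_{\hom}(G, K^{M}_{1}/m)$. Hence $I$ already represents the zero class in the colimit, proving injectivity.

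For surjectivity, fix $J \in \Inv^{0}_{\hom}(G, K^{M}_{1} \otimes_{\ZZ} \QQ/\ZZ)$. By Corollary \ref{TypeZeroGenericGroup} applied with $H = K^{M}_{1} \otimes_{\ZZ} \QQ/\ZZ$, $J$ is determined by the single element $a := J(K)(\xi) \in K^{\times} \otimes_{\ZZ} \QQ/\ZZ$. Write $K^{\times} \otimes_{\ZZ} \QQ/\ZZ = \operatornamewithlimits{colim}_{n} K^{\times}/(K^{\times})^{n}$, so $a$ is the image of some class $[b] \in K^{\times}/(K^{\times})^{n}$ under $\iota_{n}(K)$. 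Using the complex
\[K^{\times} \otimes_{\ZZ} \QQ/\ZZ \xrightarrow{~\partial~} \Div(A) \otimes_{\ZZ} \QQ/\ZZ\]
coming from the cycle module structure, together with \cite[Lemma 2.1]{MerkurjevIAG}, we know $\partial(a) = 0$; after possibly enlarging $n$ to a multiple (which is harmless since $\iota_{n} = \iota_{m} \circ \beta_{n, m}$ and $\Div(A) \otimes_{\ZZ} \QQ/\ZZ = \operatornamewithlimits{colim}_{m}\Div(A)/m\Div(A)$ with $\Div(A)$ free), we may arrange that $[b] \in \ker(\partial_{n}(A)) \subset K^{\times}/(K^{\times})^{n}$. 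By Theorem \ref{TorsKerDivMap}, $\ker(\partial_{n}(A))$ is the image of $\Psi(A, n)$ under $\Delta_{n}(K)^{-1} \circ \Psi(-, n)(\xi)$; pick $X \in \Psi(A, n)$ with $\Delta_{n}(K)^{-1}(\Psi(-, n)(\xi)(X)) = [b]$. Exactly as in the proof of Theorem \ref{TypeZeroInvariantsReductiveModN}, the associated invariant $I_{X} \in \Inv^{0}(G, K^{M}_{1}/n)$ satisfies $I_{X}(K)(\xi) = [b]$; moreover, since $\iota_{n} \circ I_{X}$ and $J$ agree on $\xi$, and since $J$ is homomorphic while $I_{X}$ satisfies the multiplicativity hypothesis of Proposition \ref{TypeZeroGenericSet} by Theorem \ref{PointsMultTorsors}, the difference $\iota_{n} \circ I_{X} - J$ vanishes at $\xi$ and satisfies that hypothesis, so Proposition \ref{TypeZeroGenericSet} gives $\iota_{n} \circ I_{X} = J$. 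Finally, comparing values at $\varepsilon_{F}$ shows $X$ is normalized, so $I_{X} \in \Inv^{0}_{\hom}(G, K^{M}_{1}/n)$ and its class in the colimit maps to $J$. This proves surjectivity, and hence that $\operatornamewithlimits{colim}_{n \in \NN} \Inv^{0}(G, \iota_{n})$ is an isomorphism.
\end{proof}

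The main obstacle I anticipate is bookkeeping around the two colimits — making sure that ``$\partial(a) = 0$'' can genuinely be lifted to ``$\partial_{n}(A)([b]) = 0$'' at a finite level — which is why the freeness of $\Div(A)$ (so that $\Div(A) \otimes_{\ZZ} \QQ/\ZZ = \operatornamewithlimits{colim}_m \Div(A)/m$ with injective transition maps, letting us detect vanishing at finite level after passing to a multiple) is essential; the rest is a repackaging of the finite-level argument of Theorem \ref{TypeZeroInvariantsReductiveModN}.
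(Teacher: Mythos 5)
Your proof of injectivity is correct and matches the paper's approach (via the commutative square with the evaluation maps and the injectivity of $\ev_{\xi}$). For surjectivity, your argument is also parallel to the paper's up to the point where you conclude $\iota_{n} \circ I_{X} = J$. The gap is in the final sentence: ``comparing values at $\varepsilon_{F}$ shows $X$ is normalized.'' What you actually learn from comparing values at $\varepsilon_{F}$ is that $\iota_{n}(F)(I_{X}(F)(\varepsilon_{F})) = J(F)(\varepsilon_{F}) = 0$ in $F^{\times} \otimes_{\ZZ} \QQ/\ZZ$, i.e.\ that $I_{X}(F)(\varepsilon_{F})$ lies in $\ker(\iota_{n}(F))$. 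This kernel is not trivial in general: the map $\iota_{n}(F) \colon F^{\times}/(F^{\times})^{n} \to F^{\times} \otimes_{\ZZ} \QQ/\ZZ$ kills (the image of) the torsion subgroup of $F^{\times}$. Concretely, for $F = \QQ$ and $n = 2$, the class $[-1]$ is nonzero in $\QQ^{\times}/(\QQ^{\times})^{2}$ yet $(-1) \otimes \tfrac{1}{2} = (-1)^{2} \otimes \tfrac{1}{4} = 0$ in $\QQ^{\times} \otimes_{\ZZ} \QQ/\ZZ$. So you cannot conclude that $X$ is normalized, and hence cannot conclude $I_{X}$ is homomorphic — but that is what you need, since $I_{X}$ must lie in $\Inv^{0}_{\hom}(G, K^{M}_{1}/n)$ to represent a class in the source of the map.

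The paper handles this by observing that $z := I_{X}(F)(\varepsilon_{F}) \in \ker(\iota_{n}(F))$ only guarantees that $z \in \ker(\beta_{n, nd}(F))$ for \emph{some} positive integer $d$ (since $\iota_{n} = \iota_{nd} \circ \beta_{n, nd}$ and $F^{\times} \otimes \QQ/\ZZ$ is the colimit over $m$ of $F^{\times}/(F^{\times})^{m}$). Passing to $X' := \omega_{n, nd}(A)(X) \in \Psi(A, nd)$, the compatibility square between $\beta_{n, nd}$ and $\omega_{n, nd}$ shows that $\Psi(-, nd)(\varepsilon_{F})(X') = \Delta_{nd}(F)(\beta_{n, nd}(F)(z)) = 0$, so $X'$ \emph{is} normalized, $I_{X'}$ is homomorphic, and $\iota_{nd} \circ I_{X'} = J$. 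Your enlargement-of-$n$ step earlier in the argument only serves to put $[b]$ into $\ker(\partial_{n}(A))$; it is chosen before $X$ exists and does not address the separate normalization issue. You need this second, independent passage to a multiple $nd$ of $n$ to close the argument.
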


\begin{proof}
	We have the following commutative diagram:
	\begin{center}
		\begin{tikzcd}
		{\displaystyle\operatornamewithlimits{colim}\limits_{n \in \NN} \Inv^{0}_{\hom}(G, K^{M}_{1}/n) } \arrow[rrr, "\operatornamewithlimits{colim}\limits_{n \in \NN} \ev_{\xi}(K^{M}_{1}/n)"] \arrow[ddd, "{\operatornamewithlimits{colim}\limits_{n \in \NN} \Inv^{0}(G, \iota_{n})}"'] &  &  & \displaystyle\operatornamewithlimits{colim}\limits_{n \in \NN} \ker(\partial_{n}(A)) \arrow[ddd, "\operatornamewithlimits{colim}\limits_{n \in \NN} \iota_{n}(K)"] \\
		&  &  &                                                                                         \\
		&  &  &                                                                                         \\
		{\Inv^{0}_{\hom}(G, K^{M}_{1} \otimes_{\ZZ} \QQ/\ZZ)} \arrow[rrr, "\ev_{\xi}(K^{M}_{1} \otimes_{\ZZ} \QQ/\ZZ)"']                                                         &  &  & \ker(\partial(A) \otimes_{\ZZ} \Id_{\QQ/\ZZ})                                          
		\end{tikzcd}
	\end{center}
	The rightmost arrow is an isomorphism, and the lower and upper horizontal arrows are injective by Corollary \ref{TypeZeroGenericGroup}, so it follows that $\operatornamewithlimits{colim}\limits_{n \in \NN} \Inv^{0}(G, \iota_{n})$ is injective. To see that $\operatornamewithlimits{colim}\limits_{n \in \NN} \Inv^{0}(G, \iota_{n})$ is surjective, fix an invariant $I \in \Inv^{0}_{\hom}(G, K^{M}_{1} \otimes_{\ZZ} \QQ/\ZZ)$ and let $x = I(K)(\xi) \in \ker(\partial(A) \otimes_{\ZZ} \Id_{\QQ/\ZZ})$. There exists some positive integer $n$ and $y \in \ker(\partial_{n}(A))$ such that $\iota_{n}(K)(y) = x$. Let $Y \in \Psi(A, n)$ with $\Psi(-, n)(\xi)(Y) = \Delta_{n}(K)(y)$. Then the associated invariant $I_{Y} \in \Inv^{0}(G, K^{M}_{1}/n)$ satisfies $(\iota_{n} \circ I_{Y})(K)(\xi) = x = I(K)(\xi)$, and so $\iota_{n} \circ I_{Y} = I$ by Theorem \ref{PointsMultTorsors} and Proposition \ref{TypeZeroGenericSet}. In particular, $((\iota_{n} \circ I_{Y})(F))(\varepsilon_{F})$ is the trivial class in $F^{\times} \otimes_{\ZZ} \QQ/\ZZ$, which means that $z := I_{Y}(F)(\varepsilon_{F})$ belongs to the kernel of $\iota_{n}(F) \colon F^{\times}/(F^{\times})^{n} \to F^{\times} \otimes_{\ZZ} \QQ/\ZZ$. \\
	This can only be the case if $z \in \ker(\beta_{n, nd}(F))$ for some $d \in \NN$, so fix such a $d$. For any field extension $M/F$, the diagram
	\begin{center}
		\begin{tikzcd}
		M^{\times}/(M^{\times})^{n} \arrow[ddd, "{\beta_{n, nd}(M)}"'] \arrow[rrr, "\Delta_{n}(M)"] &  &  & {\Psi(M, n)} \arrow[ddd, "{\omega_{n, nd}(M)}"] \\
		&  &  &                                                 \\
		&  &  &                                                 \\
		M^{\times}/(M^{\times})^{nd} \arrow[rrr, "\Delta_{nd}(M)"']                                &  &  & {\Psi(M, nd)}                                  
		\end{tikzcd}
	\end{center}
	commutes, and so putting $Y' = \omega_{n, nd}(A)(Y)$, $\Psi(-, nd)(\varepsilon_{F})(Y') = \Delta_{nd}(F)(\beta_{n, nd}(F)(z))$, whence $Y'$ is normalized. Thus, $I_{Y'} = \Lambda_{nd}(G)(Y')$ is homomorphic, and 
	\[((\iota_{nd} \circ I_{Y'})(K))(\xi)) = \iota_{nd}(K)(\beta_{n, nd}(K)(y)) = \iota_{n}(K)(y) = x,\]
	so $\iota_{nd} \circ I_{Y'} = I$ by Corollary \ref{TypeZeroGenericGroup}. 
\end{proof}

\begin{cor} \label{TypeZeroInvariantsGaloisQZ}
	If $G$ is a torus, then $\Inv^{0}_{\hom}(G, K^{M}_{1} \otimes_{\ZZ} \QQ/\ZZ) \cong H^{0}(F, G_{\sep}^{\ast} \otimes \QQ/\ZZ)$. 
\end{cor}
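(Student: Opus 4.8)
The plan is to obtain the isomorphism by passing to the colimit in Corollary~\ref{TypeZeroInvariantsGaloisModN} and invoking Proposition~\ref{ColimTypeZero}. Write $\theta_{n} \colon H^{0}(F, G_{\sep}^{\ast}/(G_{\sep}^{\ast})^{n}) \to \Inv^{0}_{\hom}(G, K^{M}_{1}/n)$ for the isomorphism $(\Delta_{n}(A_{\sep}) \circ \Psi(-, n)(\alpha))^{-1} \circ \Lambda_{n}(G)$ of Corollary~\ref{TypeZeroInvariantsGaloisModN}, where $G$ is a torus, $A = F[G]$, and $\alpha \colon A \to A_{\sep}$ is the base-change map. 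First I would check that the $\theta_{n}$ assemble into an isomorphism of direct systems over $\NN$ ordered by divisibility: for $n \mid m$, the square with horizontal maps $\theta_{n}, \theta_{m}$, left vertical the map $G_{\sep}^{\ast}/(G_{\sep}^{\ast})^{n} \to G_{\sep}^{\ast}/(G_{\sep}^{\ast})^{m}$ induced by multiplication by $m/n$, and right vertical $\Inv^{0}(G, \beta_{n, m})$, should commute. Granting this, Proposition~\ref{ColimTypeZero} gives
\[\Inv^{0}_{\hom}(G, K^{M}_{1} \otimes_{\ZZ} \QQ/\ZZ) \;\cong\; \operatornamewithlimits{colim}\limits_{n \in \NN} \Inv^{0}_{\hom}(G, K^{M}_{1}/n) \;\cong\; \operatornamewithlimits{colim}\limits_{n \in \NN} H^{0}\!\left(F, G_{\sep}^{\ast}/(G_{\sep}^{\ast})^{n}\right),\]
and it remains to identify the last colimit with $H^{0}(F, G_{\sep}^{\ast} \otimes_{\ZZ} \QQ/\ZZ)$.

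For the naturality of the $\theta_{n}$, I would break $\theta_{n}$ into its two constituents. The maps $\Delta_{\bullet}$ are compatible with $\beta_{\bullet, \bullet}$ and $\omega_{\bullet, \bullet}$: this is exactly the commuting square in the proof of Proposition~\ref{ColimTypeZero}, now applied over $A_{\sep}$; and since $\omega_{\bullet,\bullet}$ is a natural transformation, it commutes with $\Psi(-, \bullet)(\alpha)$ and restricts on the subgroup $G_{\sep}^{\ast}/(G_{\sep}^{\ast})^{n} = \Delta_n(A_{\sep})\bigl(G_{\sep}^{\ast}/(G_{\sep}^{\ast})^{n}\bigr) \subset \Psi(A_{\sep}, n)$ to the multiplication-by-$m/n$ map. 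On the other hand, a direct computation from the definition of $I_{X}$, using again the naturality of $\omega_{n,m}$ and the fact that $\Delta_{n}(L)$ is an isomorphism for a field $L$, shows $\Lambda_{m}(G) \circ \omega_{n, m}(A) = \Inv^{0}(G, \beta_{n, m}) \circ \Lambda_{n}(G)$; note $\omega_{n,m}(A)$ preserves normalized classes because it commutes with $\Psi(-,\bullet)(\varepsilon_{F})$. Chaining these identities yields the desired square.

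To evaluate the colimit, recall that since $G$ is a torus, $G_{\sep}^{\ast}$ is a finitely generated free abelian group, so $G_{\sep}^{\ast}/(G_{\sep}^{\ast})^{n} = G_{\sep}^{\ast} \otimes_{\ZZ} \ZZ/n\ZZ$ with transition maps $\id \otimes (\cdot\, m/n)$; since tensor products commute with colimits and $\QQ/\ZZ = \operatornamewithlimits{colim}_{n} \ZZ/n\ZZ$ (along these same maps), we get $\operatornamewithlimits{colim}_{n} G_{\sep}^{\ast}/(G_{\sep}^{\ast})^{n} = G_{\sep}^{\ast} \otimes_{\ZZ} \QQ/\ZZ$. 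Finally, because $G_{\sep}^{\ast}$ is finitely generated and the $\Gamma$-action is continuous, that action factors through $\Delta := \Gal(E/F)$ for some finite Galois extension $E/F$; all modules above are then $\ZZ[\Delta]$-modules, and $H^{0}(F, -) = \Hom_{\ZZ[\Delta]}(\ZZ, -)$. As $\Delta$ is finite, $\ZZ$ is finitely presented over $\ZZ[\Delta]$, so $\Hom_{\ZZ[\Delta]}(\ZZ, -)$ commutes with filtered colimits, giving $\operatornamewithlimits{colim}_{n} H^{0}(F, G_{\sep}^{\ast}/(G_{\sep}^{\ast})^{n}) = H^{0}(F, G_{\sep}^{\ast} \otimes_{\ZZ} \QQ/\ZZ)$, which completes the argument.

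I expect the main obstacle to be the bookkeeping in the second paragraph: confirming that $\theta_{n}$ is natural in $n$ requires unwinding the definitions of $\Lambda_{n}(G)$, $\Delta_{n}$, $\omega_{n,m}$, $\beta_{n,m}$ and stitching together several separate compatibility diagrams, whereas the remaining steps are formal (a colimit of a pointwise isomorphism of direct systems is an isomorphism, and $(-)^{\Gamma}$ commutes with the filtered colimits at hand).
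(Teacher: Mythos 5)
Your proposal matches the paper's proof in all essentials: pass to the colimit of the isomorphisms of Corollary~\ref{TypeZeroInvariantsGaloisModN}, check that they form a map of directed systems via the compatibility of $\Delta_{\bullet}$, $\omega_{\bullet,\bullet}$, $\Lambda_{\bullet}(G)$, and $\beta_{\bullet,\bullet}$, and invoke Proposition~\ref{ColimTypeZero}; the commuting square you describe is exactly the diagram the paper exhibits. You also spell out the final step---that $\operatornamewithlimits{colim}_{n} H^{0}(F, G_{\sep}^{\ast}/(G_{\sep}^{\ast})^{n}) = H^{0}(F, G_{\sep}^{\ast}\otimes_{\ZZ}\QQ/\ZZ)$ because the $\Gamma$-action factors through a finite quotient, so invariants commute with the filtered colimit---which the paper leaves implicit but is indeed needed.
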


\begin{proof}
	If $\alpha \colon A \to A_{\sep}$ denotes the canonical base change morphism, this follows from Proposition \ref{ColimTypeZero}, Theorem \ref{GalFixedTorsors}, and the fact that the diagram
	\begin{center}
		\begin{tikzcd}
		{H^{0}(F, G^{\ast}_{\sep}/(G^{\ast}_{\sep})^{n})} \arrow[ddd, "\cdot {m/n}"] &  &  & {\Psi_{\nm}(A, n)} \arrow[lll, "{\Delta_{n}(A_{\sep})^{-1} \circ \Psi(-, n)(\alpha)}"'] \arrow[rrr, "\Lambda_{n}(G)"] \arrow[ddd, "{\omega_{n, m}(A)}"] &  &  & {\Inv^{0}_{\hom}(G, K^{M}_{1}/n)} \arrow[ddd, "{\Inv^{0}(G, \beta_{n, m})}"] \\
		&  &  &                                                                                                                                                         &  &  &                                                                       \\
		&  &  &                                                                                                                                                         &  &  &                                                                       \\
		{H^{0}(F, G^{\ast}_{\sep}/(G_{\sep}^{\ast})^{m})}                            &  &  & {\Psi_{\nm}(A, m)} \arrow[lll, "{\Delta_{m}(A_{\sep})^{-1} \circ \Psi(-, m)(\alpha)}"] \arrow[rrr, "\Lambda_{m}(G)"']                                   &  &  & {\Inv^{0}_{\hom}(G, K^{M}_{1}/m)}                                    
		\end{tikzcd}
	\end{center}
	commutes for all $n, m \in \NN$ with $n$ dividing $m$.
\end{proof}

\section{Computation of Degree One Milnor $K$-invariants of Groups of Multiplicative Type} \label{TypeOneInvariants}

In this section, we determine the degree one Milnor $K$-invariants of an algebraic group $G$ of multiplicative type. To begin, fix a resolution \ref{ResolutionByTori} of $G$ by tori. Applying the snake lemma to the diagram
\begin{center}
	\begin{tikzcd}[column sep = .5 in, row sep = .5 in]
	1 \arrow{r} & T_{\sep}^{\ast} \arrow{d}{\cdot n}  \arrow{r}{g_{\sep}^{\ast}} & P_{\sep}^{\ast} \arrow{d}{\cdot n} \arrow{r}{f_{\sep}^{\ast}} & G_{\sep}^{\ast} \arrow{d}{\cdot n} \arrow{r} & 1 \\
	1 \arrow{r} & T_{\sep}^{\ast} \arrow{r}{g_{\sep}^{\ast}} & P_{\sep}^{\ast} \arrow{r}{f_{\sep}^{\ast}} & G_{\sep}^{\ast} \arrow{r} & 1 	
	\end{tikzcd}
\end{center}
yields the exact sequence of $\Gamma$-modules
\[1 \to G_{\sep}^{\ast}[n] \xrightarrow{~~} T_{\sep}^{\ast}/(T_{\sep}^{\ast})^{n} \xrightarrow{~~} P_{\sep}^{\ast}/(P_{\sep}^{\ast})^{n}, \]
and after taking $\Gamma$-fixed points we obtain the exact sequence
\[1 \to H^{0}(F, G_{\sep}^{\ast}[n]) \xrightarrow{~~} H^{0}(F, T_{\sep}^{\ast}/(T_{\sep}^{\ast})^{n}) \xrightarrow{~~} H^{0}(F, P_{\sep}^{\ast}/(P_{\sep}^{\ast})^{n}) \]
of abelian groups. Let $A = F[G], B = F[P], C = F[T]$, let $g^{\sharp} \colon C \to B, f^{\sharp} \colon B \to A$ be the associated comorphisms, and let $\alpha_{X} \colon X \to X_{\sep}$ denote the canonical base change morphism for $X = A, B, C$. For $Y = B, C$, let $\ell_{n}(Y) := \Delta_{n}(Y_{\sep})^{-1} \circ \Psi(-, n)(\alpha_{Y}) \circ \lambda_{n}(Y)^{-1}$. 

\begin{prop} \label{ExactSeqCharsTors}
	The diagram
	\begin{center}
		\begin{tikzcd}
		{G^{\ast}[n]} \arrow[rrr, "\upsilon_{n}(G)"] \arrow[ddd] &  &  & {\Tors_{\nm}(T, \bmu_{n, F}}) \arrow[rrr, "{\Tors^{\ast}(g)(\bmu_{n, F})}"] \arrow[ddd, "{\ell_{n}(C)}"'] &  &  & {\Tors_{\nm}(P, \bmu_{n, F})} \arrow[ddd, "{\ell_{n}(B)}"'] \\
		&  &  &                                                                                                                                          &  &  &                                                                                       \\
		&  &  &                                                                                                                                          &  &  &                                                                                       \\
		{H^{0}(F, G_{\sep}^{\ast}[n])} \arrow[rrr]               &  &  & {H^{0}(F, T_{\sep}^{\ast}/(T_{\sep}^{\ast})^{n})} \arrow[rrr, "\mathcal{K}^{n}(g_{\sep}^{\sharp})"']                                                                            &  &  & {H^{0}(F, P_{\sep}^{\ast}/(P_{\sep}^{\ast})^{n})}                                    
		\end{tikzcd}
	\end{center}
	commutes. 
\end{prop}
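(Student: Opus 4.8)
The plan is to verify the two squares separately. The right-hand square commutes for purely formal reasons, since each $\ell_{n}(Y) = \Delta_{n}(Y_{\sep})^{-1} \circ \Psi(-, n)(\alpha_{Y}) \circ \lambda_{n}(Y)^{-1}$ (for $Y = B, C$) is a composite of components of natural transformations; here $\Delta_{n}(Y_{\sep})$ is invertible by Corollary \ref{KMuNTorsors} because $F_{\sep}[P]$ and $F_{\sep}[T]$ are Laurent polynomial rings, hence have trivial Picard group. I would then chain three identities: naturality of $\lambda_{n} \colon \Psi(-, n) \to \Tors(-, \bmu_{n, F})$ applied to $g^{\sharp} \colon C \to B$ replaces $\lambda_{n}(B)^{-1} \circ \Tors^{\ast}(g)(\bmu_{n, F})$ by $\Psi(-, n)(g^{\sharp}) \circ \lambda_{n}(C)^{-1}$; functoriality of $\Psi(-, n)$ together with the identity $\alpha_{B} \circ g^{\sharp} = g^{\sharp}_{\sep} \circ \alpha_{C}$ (valid since $g^{\sharp}_{\sep} = g^{\sharp} \otimes_{F} F_{\sep}$) rewrites $\Psi(-, n)(\alpha_{B}) \circ \Psi(-, n)(g^{\sharp})$ as $\Psi(-, n)(g^{\sharp}_{\sep}) \circ \Psi(-, n)(\alpha_{C})$; and naturality of $\Delta_{n} \colon \mathcal{K}^{n} \to \Psi(-, n)$ applied to $g^{\sharp}_{\sep}$ replaces $\Delta_{n}(B_{\sep})^{-1} \circ \Psi(-, n)(g^{\sharp}_{\sep})$ by $\mathcal{K}^{n}(g^{\sharp}_{\sep}) \circ \Delta_{n}(C_{\sep})^{-1}$. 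Composing these yields $\ell_{n}(B) \circ \Tors^{\ast}(g)(\bmu_{n, F}) = \mathcal{K}^{n}(g^{\sharp}_{\sep}) \circ \ell_{n}(C)$; one also checks, using that $\Delta_{n}(\cdot_{\sep})^{-1}$ carries $\Psi_{\nm}(\cdot_{\sep}, n)$ onto the character subgroup modulo $n$th powers (Rosenlicht's theorem), that on normalized classes these maps take values in $T^{\ast}_{\sep}/(T^{\ast}_{\sep})^{n}$ and $P^{\ast}_{\sep}/(P^{\ast}_{\sep})^{n}$, as displayed.

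For the left-hand square, recall that the left vertical arrow is the canonical isomorphism $G^{\ast}[n] = \Hom(G, \bmu_{n, F}) \xrightarrow{\sim} H^{0}(F, G^{\ast}_{\sep}[n])$ and the bottom arrow is the $\Gamma$-fixed part of the snake connecting map $\delta \colon G^{\ast}_{\sep}[n] \to T^{\ast}_{\sep}/(T^{\ast}_{\sep})^{n}$. Fix $\chi \in G^{\ast}[n]$. First I would use naturality of $\lambda_{n}$ under the base-change $\alpha_{C}$ (and $\lambda_{n}(C_{\sep})^{-1} \circ (\text{base change}) = \Psi(-,n)(\alpha_{C}) \circ \lambda_{n}(C)^{-1}$) to obtain
\[ \ell_{n}(C)(\upsilon_{n}(G)(\chi)) = \Sigma_{n}(C_{\sep})^{-1}\bigl((\upsilon_{n}(G)(\chi))_{\sep}\bigr), \]
where $\Sigma_{n} = \lambda_{n} \circ \Delta_{n}$ and $(\upsilon_{n}(G)(\chi))_{\sep} = \Tors_{\ast}(\chi_{\sep})(T_{\sep})\bigl((P \to T)_{\sep}\bigr)$ is the push-out of the $G_{\sep}$-torsor $P_{\sep} \to T_{\sep}$ along $\chi_{\sep}$ (push-out commutes with base change). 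Next I would unwind $\delta$: choose a lift $\tilde{\chi} \in P^{\ast}_{\sep}$ of $\chi_{\sep}$ along the surjection $f^{\ast}_{\sep}$, and let $\psi \in T^{\ast}_{\sep}$ be the unique character with $g^{\ast}_{\sep}(\psi) = \tilde{\chi}^{n}$, so $\delta(\chi_{\sep}) = [\psi]$. The triple $(\chi_{\sep}, \tilde{\chi}, \psi)$ is a morphism of short exact sequences of $F_{\sep}$-group schemes from $1 \to G_{\sep} \to P_{\sep} \xrightarrow{g_{\sep}} T_{\sep} \to 1$ to the Kummer sequence $1 \to \bmu_{n, F_{\sep}} \to \mathbb{G}_{m} \xrightarrow{(\cdot)^{n}} \mathbb{G}_{m} \to 1$, and the functoriality of the induced-torsor construction then identifies $(\upsilon_{n}(G)(\chi))_{\sep}$ with the pull-back along $\psi \colon T_{\sep} \to \mathbb{G}_{m}$ of the $\bmu_{n, F_{\sep}}$-torsor $\mathbb{G}_{m} \xrightarrow{(\cdot)^{n}} \mathbb{G}_{m}$ (the map $\tilde{\chi}$ supplies the $\bmu_{n, F_{\sep}}$-equivariant morphism lying over $\psi$). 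Since that Kummer torsor equals $\Sigma_{n}(F_{\sep}[\mathbb{G}_{m}])([t])$ for the coordinate $t$, and the comorphism of $\psi$ sends $t$ to the group-like element $\psi \in F_{\sep}[T]^{\times}$, naturality of $\Sigma_{n}$ gives $\Sigma_{n}(C_{\sep})^{-1}\bigl((\upsilon_{n}(G)(\chi))_{\sep}\bigr) = [\psi]$. Hence $\ell_{n}(C)(\upsilon_{n}(G)(\chi)) = [\psi] = \delta(\chi_{\sep})$, which is commutativity of the left square.

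The step I expect to be the main obstacle is the identification of the push-out of $P \to T$ along $\chi$ with the pull-back of the Kummer torsor along $\psi$, i.e. pinning down the functoriality of induced torsors for a morphism of short exact sequences of commutative group schemes. The argument is routine bookkeeping, but two points require care: the lift $\tilde{\chi}$ need not be $\Gamma$-equivariant, so this identification genuinely lives over $F_{\sep}$, with $\Gamma$-equivariance entering only through $\delta$ and through the already-established formula for $\ell_{n}(C) \circ \upsilon_{n}(G)$; and all push-outs and pull-backs must be taken in the fppf topology, so that the Kummer sequence is exact regardless of $\chara(F)$, consistently with the use of fppf $\bmu_{n, F}$-torsors throughout.
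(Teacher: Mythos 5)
Your proof is correct, and for the substantive part (the left-hand square) it takes a genuinely different route from the paper's. Both arguments begin by lifting $\chi_\sep$ to $\tilde{\chi} \in P_\sep^\ast$ and taking the unique $\psi \in T_\sep^\ast$ with $g_\sep^\ast(\psi) = \tilde{\chi}^{\,n}$, so that $\delta(\chi_\sep) = [\psi]$; where they diverge is in identifying $\ell_n(C)(\upsilon_n(G)(\chi))$ with $[\psi]$. The paper works ``upstairs'' at the level of group schemes of multiplicative type: it constructs $H$ as the group dual to the fibre product $P_\sep^\ast \times_{G_\sep^\ast} \ZZ/n\ZZ$, exhibits the contracted product $(P \times \bmu_{n,F})/G^{\chi} \to H$ over $T$ to show $H$ represents $\upsilon_n(G)(\chi)$, and then analyses the $\ZZ/n\ZZ$-grading on the group algebra $F_\sep[H_\sep]$ coming from the coset decomposition $H_\sep^\ast = \coprod_v Q_v$ to produce the graded isomorphism onto $C_\sep[X]/\langle X^n - z\rangle$. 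You instead pass to $F_\sep$ immediately, observe that $(\chi_\sep, \tilde{\chi}, \psi)$ is a morphism of short exact sequences to the Kummer sequence $1 \to \bmu_{n,F_\sep} \to \GG_m \to \GG_m \to 1$, and invoke the standard fact that pushing the top row out along the left map is the same as pulling the bottom row back along the right map; the computation then collapses to naturality of $\Sigma_n$ applied to the comorphism $t \mapsto \psi$. Your route is more conceptual and sidesteps the explicit group-algebra/grading bookkeeping entirely, at the cost of needing the pushout--pullback identification for a morphism of short exact sequences of commutative group schemes --- which you rightly flag as the one lemma needing verification, though it is quickly checked by the explicit $\bmu_{n,F_\sep}$-equivariant map $(p, u) \mapsto (\tilde{\chi}(p)u,\, g_\sep(p))$ from $(P_\sep \times \bmu_{n,F_\sep})/G_\sep^{\chi_\sep}$ to $\GG_m \times_{\GG_m, \psi} T_\sep$, which is essentially the same universal map the paper constructs (but viewed differently). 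Your remarks about $\tilde{\chi}$ not being $\Gamma$-equivariant and about working fppf so the Kummer sequence is exact in all characteristics are both apt; the normalized-classes-land-in-$T_\sep^\ast/(T_\sep^\ast)^n$ point via Rosenlicht is handled correctly as well. Your treatment of the right square is the same as the paper's, just spelled out in more detail.
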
  

\begin{proof}
	The right square commutes because $\Delta_{n}$ and $\lambda_{n}$ are natural transformations. To see that the left square commutes, fix $\chi \in G^{\ast}[n]$. 
	Consider the commutative diagram
	\begin{center}
		\begin{tikzcd}
		1 \arrow[rr] &  & T_{\sep}^{\ast} \arrow[rr, "j_{\sep}^{\ast}"] \arrow[dd, "\Id_{T_{\sep}^{\ast}}"'] &  & P_{\sep}^{\ast} \times_{G_{\sep}^{\ast}} \mathbb{Z}/n\mathbb{Z} \arrow[rr, "(\pi_{n})_{\sep}^{\ast}"] \arrow[dd, "(\pi_{P})_{\sep}^{\ast}"] &  & \mathbb{Z}/n\mathbb{Z} \arrow[rr] \arrow[dd, "\chi_{\sep}^{\ast}"] &  & 1 \\
		&  &                                                                      &  &                                                                                                             &  &                                                             &  &   \\
		1 \arrow[rr] &  & T_{\sep}^{\ast} \arrow[rr, "g_{\sep}^{\ast}"']                          &  & P_{\sep}^{\ast} \arrow[rr, "f_{\sep}^{\ast}"']                                                                &  & G_{\sep}^{\ast} \arrow[rr]                                  &  & 1
		\end{tikzcd}
	\end{center}
	of $\Gamma$-modules with exact rows, and let $H$ denote the group of multiplicative type dual to $P_{\sep}^{\ast} \times_{G_{\sep}^{\ast}} \mathbb{Z}/n\mathbb{Z}$. The $F$-group morphism $j \colon H \to T$ dual to $j_{\sep}^{\ast}$ is a $\mu_{n, F}$-torsor over $T$, and we claim that $j$ represents the class $\upsilon_{n}(G)(\chi)$. Indeed, let $\pi_{n} \colon \bmu_{n, F} \to H, \pi_{P} \colon P \to H$ be the morphisms dual to $(\pi_{n})_{\sep}^{\ast}$ and $(\pi_{P})_{\sep}^{\ast}$ respectively. The morphism of $T$-schemes $P \times \bmu_{n, F} \to H$ defined on $R$-points by $(x, y) \mapsto \pi_{P}(R)(x)\pi_{n}(R)(y)$ for any $F$-algebra $R$ and any $x \in P(R), y \in \bmu_{m, F}(R)$ is constant on $G^{\chi}$-orbits. It therefore descends to a universal map $(P \times \bmu_{n, F})/G^{\chi} \to H$ over $T$, which one may check is $\bmu_{n, F}$-equivariant. \\
	Now, let $y \in P_{\sep}^{\ast}$ be such that $f_{\sep}^{\ast}(y) = \chi$, and let $z \in T_{\sep}^{\ast}$ be such that $g_{\sep}^{\ast}(z) = y^{n}$. We must show that $\Spec(C_{\sep}[X]/\langle X^{n} - z \rangle) \to \Spec(C_{\sep})$ and $j_{\sep}$ are isomorphic as $\bmu_{n, F_{\sep}}$-torsors over $T_{\sep}$. Equivalently, we must exhibit a(n) (iso)morphism of $\mathbb{Z}/n\mathbb{Z}$-graded $C_{\sep}$-algebras $s \colon C_{\sep}[X]/\langle X^{n}-z \rangle \to F_{\sep}[H_{\sep}]$. The condition that $s$ respect the $\mathbb{Z}/n\mathbb{Z}$-grading ensures that the dual morphism of schemes $H \to \Spec(C_{\sep}[X]/\langle X^{n} - z \rangle)$ is $\bmu_{n, F_{\sep}}$-equivariant, hence an isomorphism of $\bmu_{n, F_{\sep}}$-torsors. \\
	By construction, $F_{\sep}[H_{\sep}]$ is the group algebra of $H_{\sep}^{\ast} = P_{\sep}^{\ast} \times_{G_{\sep}^{\ast}} \ZZ/n\ZZ$ over $F_{\sep}$, and $C_{\sep}$ is likewise the group algebra $F_{\sep} \langle T_{\sep}^{\ast} \rangle$. The comorphism $j_{\sep}^{\sharp}$ corresponds to the $\Gamma$-module embedding $j_{\sep}^{\ast} \colon T_{\sep}^{\ast} \hookrightarrow H_{\sep}^{\ast}$. For each $v \in \ZZ/n\ZZ$, put $Q_{v} := ((\pi_{n})_{\sep}^{\ast})^{-1}(v)$. Note that $Q_{v}Q_{v'} \subset Q_{v+v'}$, and $Q_{v} = (y, [1]_{n})^{k_{v}}j^{\ast}(T_{\sep}^{\ast})$, where $k_{v} \in \NN$ is the unique representative for $v$ between $0$ and $n-1$. The $(\ZZ/n\ZZ)$-grading on $H_{\sep}$ arises from the partition 
	\[H_{\sep}^{\ast} = \coprod_{v \in \ZZ/n\ZZ} Q_{v}\]
	by setting $R_{v}$ to be the $F_{\sep}$-subspace of $F_{\sep}[H_{\sep}]$ generated by $Q_{v}$. We clearly have $F_{\sep}[H_{\sep}] = \bigoplus_{v \in \ZZ/n\ZZ} R_{v}$, and $R_{v}R_{v'} \subset R_{v+v'}$ follows from $Q_{v}Q_{v'} \subset Q_{v+v'}$. Furthermore, $R_{v}$ is the $C_{\sep}$-submodule of $F_{\sep}[H_{\sep}]$ generated by $(y, [1]_{n})^{k_{v}}$. With this in mind, let $s \colon C_{\sep}[X]/\langle X^{n}-z\rangle \to F_{\sep}[H_{\sep}]$ be the universal morphism of $C_{\sep}$-algebras sending the class of $X$ to $(y, [1]_{n})$. This respects the $(\ZZ/n\ZZ)$-grading on each $C_{\sep}$-algebra, since $(y, [1]_{n})$ belongs to the $[1]_{n}$-graded component of $H_{\sep}$, and $C_{\sep}$ embeds into each algebra as the $[0]_{n}$-graded component.  
\end{proof}

Since all vertical arrows of the diagram in Proposition \ref{ExactSeqCharsTors} are isomorphisms, this proves:

\begin{cor}
	The sequence
	\[1 \to G^{\ast}[n] \xrightarrow{~\upsilon_{n}(G)~} \Tors_{\nm}(T, \bmu_{n, F}) \xrightarrow{~\Tors^{\ast}(g)(\bmu_{n, F})~} \Tors_{\nm}(P, \bmu_{n, F})\]
	is exact. \qed
\end{cor}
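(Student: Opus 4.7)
My plan is to deduce the exactness of the top row entirely from the commutative diagram furnished by Proposition \ref{ExactSeqCharsTors}, by verifying that all three vertical arrows are isomorphisms and that the bottom row is exact; the conclusion will then follow from a formal diagram argument.

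First, I would record that the bottom row is exact. This is precisely what was derived at the start of the section: applying the snake lemma to the multiplication-by-$n$ self-map of the short exact sequence $1 \to T_\sep^\ast \to P_\sep^\ast \to G_\sep^\ast \to 1$ coming from the resolution \ref{ResolutionByTori}, and taking $\Gamma$-invariants, produces exactly the sequence
\[1 \to H^0(F, G_\sep^\ast[n]) \to H^0(F, T_\sep^\ast/(T_\sep^\ast)^n) \xrightarrow{\mathcal{K}^n(g_\sep^\sharp)} H^0(F, P_\sep^\ast/(P_\sep^\ast)^n).\]
The identification $G^\ast[n] = H^0(F, G_\sep^\ast[n])$ is just the fact that forming $n$-torsion commutes with taking $\Gamma$-fixed points; this gives the leftmost vertical arrow and shows it is an isomorphism.

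Next, I would verify that $\ell_n(C)$ and $\ell_n(B)$ are isomorphisms. Since $T$ and $P$ are tori, $C_\sep$ and $B_\sep$ are Laurent polynomial rings over $F_\sep$, which are unique factorization domains; in particular, $\Pic(C_\sep) = \Pic(B_\sep) = 0$, so by Corollary \ref{KMuNTorsors} both $\Delta_n(C_\sep)$ and $\Delta_n(B_\sep)$ are isomorphisms. Moreover, $T$ and $P$ are smooth, geometrically integral group schemes with $\Cl(C_\sep) = \Cl(B_\sep) = 0$ and torsion-free character modules (so the $n$-torsion in the characters vanishes), which are exactly the hypotheses of Theorem \ref{GalFixedTorsors}. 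Hence $\Psi(-,n)(\alpha_C)$ and $\Psi(-,n)(\alpha_B)$ restrict to isomorphisms onto the $\Gamma$-fixed subgroups of the normalized classes. Combined with Theorem \ref{PsiTorsors}, which ensures $\lambda_n$ is a natural isomorphism, this shows that each of $\ell_n(C)$ and $\ell_n(B)$ is an isomorphism.

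With all three vertical arrows known to be isomorphisms and the bottom row known to be exact, the top row is forced to be exact as well by an elementary diagram chase (injectivity of $\upsilon_n(G)$, and the equality of image and kernel at the middle term, all transfer along the vertical isomorphisms). I do not expect any genuine obstacle: the diagram already performs the essential translation from torsor data to Galois-cohomological data, and the only point requiring attention is confirming that the geometric hypotheses of Theorem \ref{GalFixedTorsors} are met for tori, which is immediate.
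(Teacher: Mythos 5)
Your proposal is correct and takes essentially the same approach as the paper: the paper's own justification is precisely that the vertical arrows of the diagram in Proposition \ref{ExactSeqCharsTors} are isomorphisms and the bottom row is exact, with the verification of the vertical isomorphisms (via Corollary \ref{KMuNTorsors}, Theorem \ref{GalFixedTorsors}, and Theorem \ref{PsiTorsors}) left implicit. You have simply made those implicit checks explicit, and they are all correct.
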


For any smooth, connected, reductive group $R$ over $F$, define $\tilde{\Lambda}_{n}(R) \colon \Tors_{\nm}(R, \bmu_{n, F}) \to \Inv^{0}_{\hom}(R, K^{M}_{1}/n)$ by $\tilde{\Lambda}_{n}(R) = \Lambda_{n}(R) \circ \lambda_{n}(F[R])^{-1}$. As noted in section \ref{Pushforward}, the last crucial detail in our computation of $\Inv^{1}_{\hom}(G, K^{M}_{1}/n)$ is the following lemma. 

\begin{lem} \label{KeyDiagramCommutes}
The diagram
\begin{center}
	\begin{tikzcd}
	{G^{\ast}[n]} \arrow[rr, "\upsilon_{n}(G)"] \arrow[dd, "{\Phi(G, n)}"] &  & {\Tors_{\nm}(T, \bmu_{n, F})} \arrow[rr, "{\Tors^{\ast}(g)(\bmu_{n, F})}"] \arrow[dd, "\tilde{\Lambda}_{n}(T)"] &  & {\Tors_{\nm}(P, \bmu_{n, F})} \arrow[dd, "\tilde{\Lambda}_{n}(P)"] \\
	&  &                                                                        &  &                                                                                                                 &  &                                                                    \\
	{\Inv^{1}_{\hom}(G, K^{M}_{1}/n)} \arrow[rr, "{\Inv(\rho, K^{M}_{1}/n)}"']            &  & {\Inv^{0}_{\hom}(T, K^{M}_{1}/n)} \arrow[rr, "{\Inv(g, K^{M}_{1}/n)}"']                                                        &  & {\Inv^{0}_{\hom}(P, K^{M}_{1}/n)}                                 
	\end{tikzcd}
\end{center}
commutes. 
\end{lem}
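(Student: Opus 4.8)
The plan is to check that the left-hand and right-hand squares commute separately. Both verifications are formal: they reduce to unwinding the definitions of $\Phi(G,n)$, $\upsilon_{n}(G)$ and $\tilde{\Lambda}_{n}$, and then invoking the naturality of $\lambda_{n}$ (Theorem~\ref{PsiTorsors}), the functoriality of $\Psi(-,n)$, and the identity $\Sigma_{n} = \lambda_{n}\circ\Delta_{n}$. The one observation worth isolating first is a transparent formula for $\tilde{\Lambda}_{n}$: if $R$ is a smooth, connected, reductive group over $F$ with $D = F[R]$, if $V \in \Tors_{\nm}(R, \bmu_{n,F})$ and $X = \lambda_{n}(D)^{-1}(V) \in \Psi_{\nm}(D,n)$, then for every field extension $L/F$ and every point $\alpha \in R(L)$ (regarded as an algebra map $\alpha\colon D \to L$) one has, by the definitions of $\tilde{\Lambda}_{n}(R) = \Lambda_{n}(R)\circ\lambda_{n}(D)^{-1}$ and of $I_{X}$ together with naturality of $\lambda_{n}$,
\[
\bigl(\tilde{\Lambda}_{n}(R)(V)\bigr)(L)(\alpha) \;=\; \Sigma_{n}(L)^{-1}\bigl(\Tors^{\ast}(\alpha)(\bmu_{n,F})(V)\bigr).
\]
I would then apply this formula for $R = T$ and $R = P$.

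For the right square, take $V \in \Tors_{\nm}(T, \bmu_{n,F})$ and a point $\beta \in P(L)$. Since $g$ is a group homomorphism, $g^{\sharp}\colon C \to B$ is a morphism of Hopf $F$-algebras, so $\Tors^{\ast}(g)(\bmu_{n,F})$ sends normalized torsors to normalized torsors, and all composites in the square are defined. As algebra maps $C \to L$ one has $g(L)(\beta) = \beta\circ g^{\sharp}$, so functoriality of $\Tors^{\ast}$ (equivalently of $\Psi(-,n)$) gives $\Tors^{\ast}(\beta)(\bmu_{n,F})\circ\Tors^{\ast}(g)(\bmu_{n,F}) = \Tors^{\ast}(g(L)(\beta))(\bmu_{n,F})$. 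Applying the displayed formula to $R = P$ along one composite and to $R = T$ along the other, both $\bigl(\Inv(g,K^{M}_{1}/n)\circ\tilde{\Lambda}_{n}(T)\bigr)(V)$ and $\bigl(\tilde{\Lambda}_{n}(P)\circ\Tors^{\ast}(g)(\bmu_{n,F})\bigr)(V)$, evaluated at $\beta$, equal $\Sigma_{n}(L)^{-1}\bigl(\Tors^{\ast}(g(L)(\beta))(\bmu_{n,F})(V)\bigr)$. Hence the right square commutes.

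For the left square, fix $\chi \in G^{\ast}[n]$, and let $\pi \in \Tors(T, G)$ denote the class of the tautological $G$-torsor $P \to T$, so that $\upsilon_{n}(G)(\chi) = \Tors_{\ast}(\chi)(T)(\pi)$ (which is normalized because $\pi$ restricts to the trivial $G$-torsor over $\varepsilon_{F} \in T(F)$). For a point $\alpha \in T(L)$, the displayed formula for $R = T$ gives
\[
\bigl(\tilde{\Lambda}_{n}(T)\circ\upsilon_{n}(G)\bigr)(\chi)(L)(\alpha) \;=\; \Sigma_{n}(L)^{-1}\bigl(\Tors^{\ast}(\alpha)(\bmu_{n,F})\bigl(\Tors_{\ast}(\chi)(T)(\pi)\bigr)\bigr).
\]
Now $\Tors_{\ast}(\chi)$ commutes with pullback of torsors along morphisms of $F$-schemes, in particular along $\alpha\colon \Spec(L) \to T$, so the argument of $\Sigma_{n}(L)^{-1}$ equals $\Tors_{\ast}(\chi)(L)\bigl(\Tors^{\ast}(\alpha)(G)(\pi)\bigr) = \Tors_{\ast}(\chi)(L)(\rho(L)(\alpha))$, the last equality being the definition of $\rho$. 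On the other side, $\Phi(G,n)(\chi) = I_{\chi}$ is by definition $\Sigma_{n}^{-1}\circ\Tors_{\ast}(\chi)$ as a morphism $\Tors(-,G) \to K^{M}_{1}/n$, so $\bigl(\Inv(\rho,K^{M}_{1}/n)\circ\Phi(G,n)\bigr)(\chi)(L)(\alpha) = I_{\chi}(L)(\rho(L)(\alpha)) = \Sigma_{n}(L)^{-1}\bigl(\Tors_{\ast}(\chi)(L)(\rho(L)(\alpha))\bigr)$. The two expressions agree, so the left square commutes, completing the proof.

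The argument is essentially bookkeeping; the only place requiring care is keeping the three mutually compatible isomorphisms $\Sigma_{n}$, $\lambda_{n}$, $\Delta_{n}$ aligned, and invoking the naturality of $\Tors_{\ast}(\chi)$ — that pushforward of torsors along the character $\chi$ commutes with pullback along an $L$-point of $T$ — at exactly the right step. I do not anticipate any substantive obstacle beyond this.
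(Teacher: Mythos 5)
Your proof is correct and takes essentially the same route as the paper: the right square is reduced to functoriality of $\Tors^{\ast}$ (i.e., $\Tors^{\ast}(g\circ\beta)=\Tors^{\ast}(\beta)\circ\Tors^{\ast}(g)$) and the left square to the commutation of $\Tors_{\ast}(\chi)$ with pullback along $\alpha$, which is exactly what the paper calls ``pullback commutes with changing the group.'' Your write-up merely spells out the intermediate identity $\tilde{\Lambda}_{n}(R)(V)(L)(\alpha)=\Sigma_{n}(L)^{-1}\bigl(\Tors^{\ast}(\alpha)(\bmu_{n,F})(V)\bigr)$ that the paper leaves to the reader.
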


\begin{proof} 
	Unwinding the definitions of $\tilde{\Lambda}_{n}(T)$ and $\tilde{\Lambda}_{n}(P)$, one sees that the commutativity of the right square is a consequence of the functoriality of the pullback map on torsors. To be precise, if $\alpha \colon Y \to X, \beta \colon Z \to Y$ are morphisms of $F$-schemes, then $\Tors^{\ast}(\alpha \circ \beta) = \Tors^{\ast}(\beta) \circ \Tors^{\ast}(\alpha)$. The left square commutes because pullback operation on torsors commutes with changing the group.  
\end{proof}

As noted at the end of section \ref{Pushforward}, after a diagram chase, this proves:

\begin{thm} \label{TypeOneGMTModN}
	The map $\Phi(G, n) \colon G^{\ast}[n] \to \Inv^{1}_{\hom}(G, K^{M}_{1}/n)$ is an isomorphism. \qed  
\end{thm}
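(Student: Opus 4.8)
The plan is to run a diagram chase on the ladder displayed in Lemma \ref{KeyDiagramCommutes}, whose top row is the sequence \ref{CharactersToTorsors}, whose bottom row is the sequence \ref{ExactSeqInvariants} with $H = K^{M}_{1}/n$, and whose middle and right vertical arrows are $\tilde{\Lambda}_{n}(T)$ and $\tilde{\Lambda}_{n}(P)$. We already know that \ref{ExactSeqInvariants} is exact — it follows from the exact sequence \ref{ExactSeqTors} and the surjectivity of $\rho(L)$ — that \ref{CharactersToTorsors} is exact (the corollary to Proposition \ref{ExactSeqCharsTors}), and that the square commutes (Lemma \ref{KeyDiagramCommutes}); moreover $\Phi(G,n)$ is already known to be a group homomorphism by the remark in Section \ref{Pushforward}. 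The one preliminary point worth recording is that $T$ and $P$, being tori, are smooth, connected, and reductive, so that $\tilde{\Lambda}_{n}(T)$ and $\tilde{\Lambda}_{n}(P)$ are defined, and moreover each is an isomorphism: $\tilde{\Lambda}_{n}(R) = \Lambda_{n}(R) \circ \lambda_{n}(F[R])^{-1}$ is the composite of the isomorphism $\lambda_{n}(F[R])^{-1}$ of Theorem \ref{PsiTorsors} (which identifies $\Tors_{\nm}(R, \bmu_{n,F})$ with $\Psi_{\nm}(F[R], n)$) with the isomorphism $\Lambda_{n}(R)$ of Theorem \ref{TypeZeroInvariantsReductiveModN}.

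Granting this, the proof is the standard chase. For injectivity: if $\Phi(G, n)(\chi) = 0$, commutativity of the left square gives $\tilde{\Lambda}_{n}(T)(\upsilon_{n}(G)(\chi)) = \Inv(\rho, K^{M}_{1}/n)(\Phi(G, n)(\chi)) = 0$, hence $\upsilon_{n}(G)(\chi) = 0$ since $\tilde{\Lambda}_{n}(T)$ is injective, hence $\chi = 0$ since $\upsilon_{n}(G)$ is injective. For surjectivity: given $I \in \Inv^{1}_{\hom}(G, K^{M}_{1}/n)$, surjectivity of $\tilde{\Lambda}_{n}(T)$ produces $\tau \in \Tors_{\nm}(T, \bmu_{n, F})$ with $\tilde{\Lambda}_{n}(T)(\tau) = \Inv(\rho, K^{M}_{1}/n)(I)$. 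Because the bottom row is a complex, $\Inv(g, K^{M}_{1}/n)(\Inv(\rho, K^{M}_{1}/n)(I)) = 0$, so commutativity of the right square and injectivity of $\tilde{\Lambda}_{n}(P)$ give $\Tors^{\ast}(g)(\bmu_{n, F})(\tau) = 0$; exactness of the top row then yields $\chi \in G^{\ast}[n]$ with $\upsilon_{n}(G)(\chi) = \tau$. Commutativity of the left square gives $\Inv(\rho, K^{M}_{1}/n)(\Phi(G, n)(\chi)) = \tilde{\Lambda}_{n}(T)(\tau) = \Inv(\rho, K^{M}_{1}/n)(I)$, and injectivity of $\Inv(\rho, K^{M}_{1}/n)$ (exactness of the bottom row at its left-hand term) gives $\Phi(G, n)(\chi) = I$.

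All the substantive input — exactness of the two horizontal sequences, the computation of type-zero invariants of reductive groups, and the compatibility of $\Phi(G, n)$ with the maps $\tilde{\Lambda}_{n}$ — has been established in the previous sections, so I do not anticipate a real obstacle in this final step; the chase is purely formal. The only place that calls for a moment's attention is the verification, recalled above, that $\tilde{\Lambda}_{n}$ restricts to isomorphisms on the particular tori $T$ and $P$ arising in a resolution of $G$ by tori, which is exactly what Theorems \ref{PsiTorsors} and \ref{TypeZeroInvariantsReductiveModN} supply.
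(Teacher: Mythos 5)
Your proof is correct and follows exactly the route the paper lays out in Section \ref{Pushforward}: the ladder of Lemma \ref{KeyDiagramCommutes} has exact rows (the top by the Corollary to Proposition \ref{ExactSeqCharsTors}, the bottom by the exactness of \ref{ExactSeqInvariants}), the middle and right verticals $\tilde{\Lambda}_{n}(T), \tilde{\Lambda}_{n}(P)$ are isomorphisms by Theorems \ref{PsiTorsors} and \ref{TypeZeroInvariantsReductiveModN}, and the diagram chase you run is precisely the one the paper leaves to the reader. Nothing is missing and no step differs in substance from the paper's argument.
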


As was the case for type-zero invariants, for any natural number $n$, there is a group morphism $\Inv^{1}(G, \iota_{n}) \colon \Inv^{1}_{\hom}(G, K^{M}_{1}/n) \to \Inv^{1}_{\hom}(G, K^{M}_{1} \otimes_{\ZZ} \QQ/\ZZ)$ given by composition with $\iota_{n}$. For positive integers $n, m$ with $n$ dividing $m$, the maps $\Inv^{1}(G, \iota_{n}), \Inv^{1}(G, \iota_{m})$ are compatible with the map $\Inv^{1}(G, \beta_{n, m}) \colon \Inv^{1}_{\hom}(G, K^{M}_{1}/n) \to \Inv^{1}_{\hom}(G, K^{M}_{1}/m)$ given by composition with $\beta_{n, m}$, and so we obtain a universal induced map 
\[\operatornamewithlimits{colim}\limits_{n \in \NN} \Inv^{1}(G, \iota_{n}) \colon \operatornamewithlimits{colim}\limits_{n \in \NN} \Inv^{1}_{\hom}(G, K^{M}_{1}/n) \to \Inv^{1}_{\hom}(G, K^{M}_{1} \otimes_{\ZZ} \QQ/\ZZ).\] 

\begin{prop} \label{ColimTypeOne}
	The map $\operatornamewithlimits{colim}\limits_{n \in \NN} \Inv^{1}(G, \iota_{n})$ is an isomorphism.
\end{prop}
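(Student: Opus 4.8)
The plan is to deduce this from the type-zero statement, Proposition \ref{ColimTypeZero}, by means of the exact sequence \ref{ExactSeqInvariants} together with the exactness of filtered colimits of abelian groups. Fix a resolution \ref{ResolutionByTori} of $G$ by tori; since $T$ and $P$ are then tori, hence smooth, connected and reductive, Proposition \ref{ColimTypeZero} applies to each of them.

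First I would note that the maps $\Inv(\rho, -)$ and $\Inv(g, -)$ appearing in \ref{ExactSeqInvariants}, the transition maps $\Inv^{i}(-, \beta_{n,m})$, and the comparison maps $\Inv^{i}(-, \iota_{n})$ are all given by composition with a fixed morphism of group-valued functors (respectively $\rho$, $g$, $\beta_{n,m}$, and $\iota_{n}$), so they commute with one another by associativity of composition of natural transformations; in particular the exactness of \ref{ExactSeqInvariants} is natural in $H$. Hence \ref{ExactSeqInvariants} with $H = K^{M}_{1}/n$ forms a directed system of exact sequences, indexed by $\NN$ ordered by divisibility, and the maps $\Inv^{i}(-, \iota_{n})$ map this system to \ref{ExactSeqInvariants} with $H = K^{M}_{1} \otimes_{\ZZ} \QQ/\ZZ$. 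Passing to the colimit over $n$ in the former produces a commutative diagram with two rows and three columns whose top row is $\operatornamewithlimits{colim}\limits_{n \in \NN}$ of the exact sequences \ref{ExactSeqInvariants} for $K^{M}_{1}/n$, hence exact because filtered colimits of abelian groups preserve exactness; whose bottom row is \ref{ExactSeqInvariants} for $K^{M}_{1} \otimes_{\ZZ} \QQ/\ZZ$, hence exact; whose left vertical map is $\operatornamewithlimits{colim}\limits_{n \in \NN} \Inv^{1}(G, \iota_{n})$; and whose middle and right vertical maps are $\operatornamewithlimits{colim}\limits_{n \in \NN} \Inv^{0}(T, \iota_{n})$ and $\operatornamewithlimits{colim}\limits_{n \in \NN} \Inv^{0}(P, \iota_{n})$, which are isomorphisms by Proposition \ref{ColimTypeZero}.

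A four-lemma diagram chase then finishes the proof. Injectivity of $\operatornamewithlimits{colim}\limits_{n \in \NN} \Inv^{1}(G, \iota_{n})$ is immediate: its composite with the bottom horizontal map $\Inv(\rho, K^{M}_{1} \otimes_{\ZZ} \QQ/\ZZ)$ equals the composite $\bigl(\operatornamewithlimits{colim}\limits_{n \in \NN} \Inv^{0}(T, \iota_{n})\bigr) \circ \bigl(\operatornamewithlimits{colim}\limits_{n \in \NN} \Inv(\rho, K^{M}_{1}/n)\bigr)$ of two injective maps, and is therefore injective. For surjectivity, let $I \in \Inv^{1}_{\hom}(G, K^{M}_{1} \otimes_{\ZZ} \QQ/\ZZ)$; its image under $\Inv(\rho, K^{M}_{1} \otimes_{\ZZ} \QQ/\ZZ)$ lies in the kernel of $\Inv(g, K^{M}_{1} \otimes_{\ZZ} \QQ/\ZZ)$, so pulling it back along the bijective middle vertical map yields an element of the top middle group that maps to $0$ in the top right group (because the right vertical map is injective); by exactness of the top row this element comes from the top left group, and the image of any such preimage under $\operatornamewithlimits{colim}\limits_{n \in \NN} \Inv^{1}(G, \iota_{n})$ must equal $I$, since $\Inv(\rho, K^{M}_{1} \otimes_{\ZZ} \QQ/\ZZ)$ is injective.

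Beyond invoking Proposition \ref{ColimTypeZero} for $T$ and $P$, the only real work here is the bookkeeping of the two previous paragraphs: checking that the comparison ladder genuinely commutes, and that the colimit of the top rows is still exact. The one subtlety to keep in mind is that \ref{ExactSeqInvariants} is a three-term left-exact sequence rather than a short exact sequence, so the diagram chase is a four-lemma rather than the five-lemma proper; this is harmless, because $\Inv(\rho, -)$ is exactly an isomorphism onto the kernel of $\Inv(g, -)$, which is all the chase uses.
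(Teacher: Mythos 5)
Your proposal is correct and follows essentially the same route as the paper: you form the two-row ladder from the exact sequences \ref{ExactSeqInvariants} for $K^{M}_{1}/n$ and for $K^{M}_{1}\otimes_{\ZZ}\QQ/\ZZ$, pass to the colimit on top (noting filtered colimits of abelian groups are exact), invoke Proposition \ref{ColimTypeZero} for $T$ and $P$, and finish with the four-lemma chase using injectivity of $u$ and $u'$. You spell out the naturality and the chase in more detail than the paper, but the argument is the same.
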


\begin{proof}
	Set
	\[u = \displaystyle \operatornamewithlimits{colim}\limits_{n \in \NN} \Inv(\rho, K^{M}_{1}/n), v = \displaystyle \operatornamewithlimits{colim}\limits_{n \in \NN} \Inv(g, K^{M}_{1}/n), \]
	\[u' = \Inv(\rho, K^{M}_{1} \otimes_{\ZZ} \QQ/\ZZ), v' = \Inv(g, K^{M}_{1} \otimes_{\ZZ} \QQ/\ZZ).\]
	We have a commutative diagram
	\begin{center}
		\begin{tikzcd} [column sep = 0.2 in]
		{\displaystyle \operatornamewithlimits{colim}\limits_{n \in \NN} \Inv^{1}_{\hom}(G, K^{M}_{1}/n)} \arrow[rrr, "u"] \arrow[ddd, "{\operatornamewithlimits{colim}\limits_{n \in \NN} \Inv^{1}(G, \iota_{n})}"'] &  &  & {\displaystyle \operatornamewithlimits{colim}\limits_{n \in \NN} \Inv^{0}_{\hom}(T, K^{M}_{1}/n)} \arrow[rrr, "v"] \arrow[ddd, "{\operatornamewithlimits{colim}\limits_{n \in \NN} \Inv^{0}(T, \iota_{n})}"'] &  &  & {\displaystyle \operatornamewithlimits{colim}\limits_{n \in \NN} \Inv^{0}_{\hom}(P, K^{M}_{1}/n)} \arrow[ddd, "{\operatornamewithlimits{colim}\limits_{n \in \NN} \Inv^{0}(P, \iota_{n})}"'] \\
		&  &  &                                                                                                                                                                                     &  &  &                                                                                                                                                                                  &  &  &                                                                                                                           \\
		&  &  &                                                                                                                                                                                     &  &  &                                                                                                                                                                                  &  &  &                                                                                                                           \\
		{\Inv^{1}_{\hom}(G, K^{M}_{1} \otimes_{\ZZ} \QQ/\ZZ)} \arrow[rrr, "u'"']                                                                   &  &  & {\Inv^{0}_{\hom}(T, K^{M}_{1} \otimes_{\ZZ} \QQ/\ZZ)} \arrow[rrr, "v'"']                                                                   &  &  & {\Inv^{0}_{\hom}(P, K^{M}_{1} \otimes_{\ZZ} \QQ/\ZZ)}                                                                    
		\end{tikzcd}
	\end{center}
	whose rows are exact. Since $\operatornamewithlimits{colim}\limits_{n \in \NN} \Inv^{0}(T, \iota_{n})$ and $\operatornamewithlimits{colim}\limits_{n \in \NN} \Inv^{0}(P, \iota_{n})$ are isomorphisms by Proposition \ref{ColimTypeZero}, and $u, u'$ are injective, $\operatornamewithlimits{colim}\limits_{n \in \NN} \Inv^{1}(G, \iota_{n})$ is an isomorphism. 
\end{proof}

\begin{thm} \label{TypeOneGMTQZ}
	The map $\Phi(G) \colon G^{\ast}_{\tors} \to \Inv^{1}_{\hom}(G, K^{M}_{1} \otimes_{\ZZ} \QQ/\ZZ)$ is a group isomorphism.  
\end{thm}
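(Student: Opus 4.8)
The plan is to exhibit $\Phi(G)$ as a composite of two isomorphisms, using Theorem~\ref{TypeOneGMTModN} and Proposition~\ref{ColimTypeOne}. Recall from the introduction that $\Phi(G)$ is the map induced on colimits by the family of homomorphisms $\tilde{\Phi}(G, n) = \Inv^{1}(G, \iota_{n}) \circ \Phi(G, n) \colon G^{\ast}[n] \to \Inv^{1}_{\hom}(G, K^{M}_{1} \otimes_{\ZZ} \QQ/\ZZ)$. Concretely, we will show
\[
\Phi(G) = \Bigl(\operatornamewithlimits{colim}\limits_{n \in \NN} \Inv^{1}(G, \iota_{n})\Bigr) \circ \Bigl(\operatornamewithlimits{colim}\limits_{n \in \NN} \Phi(G, n)\Bigr),
\]
where the colimit of $\{G^{\ast}[n]\}_{n \in \NN}$ is taken along the inclusions of torsion subgroups and the inner colimit map lands in $\operatornamewithlimits{colim}\limits_{n \in \NN} \Inv^{1}_{\hom}(G, K^{M}_{1}/n)$. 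Since the outer map is an isomorphism by Proposition~\ref{ColimTypeOne}, it then suffices to check that $\operatornamewithlimits{colim}\limits_{n \in \NN} \Phi(G, n)$ is a well-defined isomorphism onto its target and that $\operatornamewithlimits{colim}\limits_{n \in \NN} G^{\ast}[n] = G^{\ast}_{\tors}$.

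First I would verify that the maps $\Phi(G, n)$ constitute a morphism of directed systems, i.e.\ that for $n \mid m$ the square
\begin{center}
\begin{tikzcd}[column sep = large]
G^{\ast}[n] \arrow[r, "{\Phi(G, n)}"] \arrow[d, hook] & \Inv^{1}_{\hom}(G, K^{M}_{1}/n) \arrow[d, "{\Inv^{1}(G, \beta_{n, m})}"] \\
G^{\ast}[m] \arrow[r, "{\Phi(G, m)}"] & \Inv^{1}_{\hom}(G, K^{M}_{1}/m)
\end{tikzcd}
\end{center}
commutes, the left arrow being the inclusion of $n$-torsion into $m$-torsion. Unwinding, $\Phi(G, n)(\chi) = I_{\chi} = \Sigma_{n}^{-1} \circ \Tors_{\ast}(\chi)$, and viewing $\chi \colon G \to \bmu_{n, F}$ inside $G^{\ast}[m]$ means post-composing with the inclusion $\bmu_{n, F} \hookrightarrow \bmu_{m, F}$. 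The key input is that $\Tors_{\ast}$ of this inclusion corresponds under $\Sigma_{n}, \Sigma_{m}$ to $\beta_{n, m}$, that is, $\Tors_{\ast}(\bmu_{n, F} \hookrightarrow \bmu_{m, F}) \circ \Sigma_{n} = \Sigma_{m} \circ \beta_{n, m}$; this follows by comparing the Kummer sequences $1 \to \bmu_{n, F} \to \GG_{m} \xrightarrow{\ n\ } \GG_{m} \to 1$ and $1 \to \bmu_{m, F} \to \GG_{m} \xrightarrow{\ m\ } \GG_{m} \to 1$ via the map of extensions that is the identity on the middle term and multiplication by $m/n$ on the right, together with the description $\Sigma_{n} = \lambda_{n} \circ \Delta_{n}$ from Section~\ref{BundlesAreTorsors}. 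Granting this, the square commutes because $\Tors_{\ast}$ is functorial in the group. Using $\iota_{n} = \iota_{m} \circ \beta_{n, m}$ one gets in the same way that the $\tilde{\Phi}(G, n)$ are compatible with the inclusions $G^{\ast}[n] \hookrightarrow G^{\ast}[m]$, so that $\Phi(G) = \operatornamewithlimits{colim}\limits_{n \in \NN} \tilde{\Phi}(G, n)$ is indeed the displayed composite.

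The remaining arguments are formal. Each $\Phi(G, n)$ is an isomorphism by Theorem~\ref{TypeOneGMTModN}, and a colimit of isomorphisms over the directed set of positive integers ordered by divisibility is again an isomorphism, so $\operatornamewithlimits{colim}\limits_{n \in \NN} \Phi(G, n)$ is an isomorphism onto $\operatornamewithlimits{colim}\limits_{n \in \NN} \Inv^{1}_{\hom}(G, K^{M}_{1}/n)$. Finally, the colimit of $\{G^{\ast}[n]\}_{n \in \NN}$ along the torsion inclusions is $\bigcup_{n} G^{\ast}[n] = G^{\ast}_{\tors}$. Composing, $\Phi(G)$ is a composite of isomorphisms
\[
G^{\ast}_{\tors} \xrightarrow{\ \sim\ } \operatornamewithlimits{colim}\limits_{n \in \NN} \Inv^{1}_{\hom}(G, K^{M}_{1}/n) \xrightarrow{\ \sim\ } \Inv^{1}_{\hom}(G, K^{M}_{1} \otimes_{\ZZ} \QQ/\ZZ),
\]
hence an isomorphism.

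The one genuinely delicate point is the bookkeeping in the compatibility square: matching up the three systems of transition maps --- the torsion inclusions on $G^{\ast}$, the maps $\beta_{n, m}$ on $K^{M}_{1}/(-)$, and (implicitly) $\omega_{n, m}$ on $\Psi(-, -)$ --- under the Kummer isomorphisms $\Sigma_{n}$, and confirming that $\Phi(G)$ as defined in the introduction is literally the induced map on colimits. Everything else reduces to the standard facts that filtered colimits are exact and preserve isomorphisms, and that $\operatornamewithlimits{colim}\limits_{n \in \NN} G^{\ast}[n] = G^{\ast}_{\tors}$.
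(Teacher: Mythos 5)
Your proof is correct and follows the same overall structure as the paper's: verify that the maps $\Phi(G,n)$ are compatible with the transition maps, identify $\operatornamewithlimits{colim}_{n} G^{\ast}[n]$ with $G^{\ast}_{\tors}$, pass to the colimit, and invoke Proposition~\ref{ColimTypeOne} for the outer factor. The one genuine difference is how the key identity $\Sigma_{m}(L) \circ \beta_{n,m}(L) = \Tors_{\ast}(\tau_{n,m})(L) \circ \Sigma_{n}(L)$ is established. You compare the two Kummer sequences $1 \to \bmu_{n,F} \to \GG_{m} \xrightarrow{n} \GG_{m} \to 1$ and $1 \to \bmu_{m,F} \to \GG_{m} \xrightarrow{m} \GG_{m} \to 1$ via the map of extensions that is the identity on the middle term and the $(m/n)$-th power on the right, and then appeal to the functoriality of the boundary map in the long exact sequence of fppf cohomology. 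The paper instead gives an explicit construction of the isomorphism of $\bmu_{m,L}$-torsors: it writes down the map $(U \times \bmu_{m,L})/(\bmu_{n,L}^{\tau_{n,m}}) \to V$ on functors of points and checks $\bmu_{m,L}$-equivariance. Your Kummer-theoretic route is cleaner conceptually, but it does require one additional identification that you should state: namely that $\Sigma_{n}$, which the paper defines as $\lambda_{n} \circ \Delta_{n}$ through the group $\Psi(-,n)$ of line bundles with trivializations, agrees with the fppf Kummer boundary map. This is standard (the fiber of the $n$-th power map over $y$ is precisely $\Spec(L[X]/\langle X^{n}-y\rangle)$), but since it is not literally the paper's definition of $\Sigma_n$, it is a step worth making explicit. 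The paper's direct construction sidesteps that identification and is thus slightly more self-contained. Both arguments are characteristic-independent, since the Kummer sequence is exact in the fppf topology for all $n$ regardless of $\chara(F)$, and both reach the same conclusion.
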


\begin{proof}
	Let $n, m$ be positive integers with $n$ dividing $m$, and let $\tau_{n, m} \colon \bmu_{n, F} \to \bmu_{m, F}$ be the canonical embedding. We claim that the diagram 
	\begin{center}
		\begin{tikzcd}
		{G^{\ast}[n]} \arrow[ddd, "{\Phi(G, n)}"'] \arrow[rrr, "{\sigma_{n, m}}"]     &  &  & {G^{\ast}[m]} \arrow[ddd, "{\Phi(G, m)}"] \\
		&  &  &                                           \\
		&  &  &                                           \\
		{\Inv^{1}_{\hom}(G, K^{M}_{1}/n)} \arrow[rrr, "{\Inv^{1}(G, \beta_{n, m})}"'] &  &  & {\Inv^{1}_{\hom}(G, K^{M}_{1}/m)}        
		\end{tikzcd}
	\end{center}
	commutes, where $\sigma_{n, m}$ is the group morphism given by composition with $\tau_{n, m}$. Indeed, it is sufficient to show that $\Sigma_{m}(L) \circ \beta_{n, m}(L) = \Tors_{\ast}(\tau_{n, m})(L) \circ \Sigma_{n}(L)$ for any field extension $L/F$. Fixing $[y] \in L$, put $U = \Spec(L[X]/\langle X^{n} -y \rangle), V = \Spec(L[X]/\langle X^{m} - y^{m/n} \rangle)$. The morphism of $L$-schemes $U \times \bmu_{m, L} \to V$ defined functorially by
	\[U(R) \times \bmu_{m, L}(R) \to V(R), (u, z) \mapsto uz\]
	for any $L$-algebra $R$ is constant on $\bmu_{n, L}^{\tau_{n, m}}$-orbits, and so descends to a morphism of $L$-schemes $(U \times \bmu_{m, L})/(\bmu_{n, L}^{\tau_{n, m}}) \to V$, which one may check is $\bmu_{m, L}$-equivariant. This establishes that $\Tors_{\ast}(\tau_{n, m})(L)(U) = V$. \\
	The universally induced map $\operatornamewithlimits{colim}\limits_{n \in \NN} \Phi(G, n) \colon G^{\ast}_{\tors} \to \operatornamewithlimits{colim}\limits_{n \in \NN} \Inv^{1}(G, K^{M}_{1}/n)$ is an isomorphism, as $\Phi(G, n)$ is an isomorphism for each $n$. Since $\Phi(G)$ is just the composition of $\operatornamewithlimits{colim}\limits_{n \in \NN} \Phi(G, n)$ with the $\operatornamewithlimits{colim}\limits_{n \in \NN} \Inv^{1}(G, \iota_{n})$, it is an isomorphism by Proposition \ref{ColimTypeOne}.  
\end{proof}


\bibliographystyle{amsalpha}
\bibliography{References}

\end{document}